\documentclass{amsart}


\usepackage[mathscr]{eucal}
\usepackage{verbatim,url} 
\usepackage{amsthm,amssymb,latexsym,bm,mathtools}
\usepackage{tikz}
\usepackage{a4wide}

\usetikzlibrary{arrows,
positioning,calc, 
decorations.pathmorphing,fit,matrix,decorations}


\newtheorem{thm}{Theorem}[section]
\newtheorem{lem}[thm]{Lemma}
\newtheorem{coro}[thm]{Corollary}
\newtheorem{prop}[thm]{Proposition}

\theoremstyle{definition}

\newenvironment{newlist}
   {\begin{list}{}{\setlength{\labelsep}{0.25cm}
                   \setlength{\labelwidth}{0.65cm}
                      \setlength{\leftmargin}{0.9cm}}}
   {\end{list}}

\newenvironment{longnewlist}
   {\begin{list}{}{\setlength{\labelsep}{0.25cm}
                   \setlength{\labelwidth}{1.35cm}
                      \setlength{\leftmargin}{1.5cm}}}
   {\end{list}}


\newcommand{\class}[1]{\boldsymbol{\mathfrak{#1}}}
\newcommand{\cat}[1]{\boldsymbol{\mathscr{#1}}}
\newcommand{\str}[1]{\mathbf{#1}}
\newcommand{\alg}[1]{\str{#1}}
\newcommand{\spc}[1]{\str{#1}}
\newcommand{\fnt}[1]{\mathsf{#1}}
\newcommand{\cnst}[1]{\boldsymbol{#1}}
\newcommand{\ope}[1]{\mathbb{#1}}
\newcommand{\defn}[1]{{\emph{#1}}}

\newcommand{\B}{\alg{B}}
\newcommand{\CA}{{\cat A}}
\newcommand{\CX}{\cat X} 
\newcommand{\Z}{\cat Z} 
\newcommand{\CP}{\cat P} 
\newcommand{\CY}{\cat Y}  

\newcommand{\CCD}{\cat D}
\newcommand{\DB}{\cat{DB}}

\newcommand{\DPB}{\cat{DPB}}
\newcommand{\unbounded}[1]{#1\mbox{\tiny{$u$}}}

\newcommand{\DBU}{\unbounded{\DB}}
\newcommand{\DPBU}{\unbounded{\DPB}}
\newcommand{\DU}{\unbounded{\CCD}}


\newcommand{\twiddle}[1]{{\smash{\underset{\raise.375ex\hbox{$\smash\sim$}}
       {\mathbf{#1}}}\vphantom{\underline{\str{#1}}}}}


\newcommand{\D}{\fnt D}
\newcommand{\E}{\fnt E}
\newcommand{\U}{\fnt U}
  
\newcommand{\Uu}{\unbounded{\U}}
\newcommand{\Hu}{\unbounded{\fnt H}}
\newcommand{\Ku}{\unbounded{\fnt K}}
\newcommand{\Du}{\unbounded{\D}}
\newcommand{\Eu}{\unbounded{\E}}

\newcommand{\KH}{\fnt{KH}}
\newcommand{\ED}{\fnt{ED}}
\newcommand{\DE}{\fnt{DE}}


\newcommand{\A}{\alg{A}}
\newcommand{\M}{\alg{M}}
\newcommand{\CM}{\class{M}}
\newcommand{\two}{\boldsymbol 2}
\newcommand{\twoU}{\unbounded{\two}} 
\newcommand{\Lalg}{\alg{L}}
\newcommand{\fourDB}{\boldsymbol 4}
\newcommand{\four}{\boldsymbol 4}
\newcommand{\fourU}{\unbounded{\four}}

\newcommand{\fourDBT}{\twiddle 4}
\newcommand{\fourDBTU}{\twiddle{\unbounded{4}}}

\newcommand{\MT}{\twiddle{\spc{M}}}
\newcommand{\CMT}{\twiddle{\CM}}
\newcommand{\twoT}{\twiddle 2}
\newcommand{\X}{\spc{X}}
\newcommand{\Y}{\spc{Y}}

\newcommand{\Tp}{{\mathscr{T}}}


\newcommand{\w}{\omega}
\newcommand{\zerobar}{\overline{\boldsymbol 0}}
\newcommand{\onebar}{\overline{\boldsymbol 1}}

\DeclareMathOperator{\ISP}{\ope{ISP}}
 
 \DeclareMathOperator{\Su}{\ope{S}}
\DeclareMathOperator{\IScP}{{\ope{IS} _{\mathrm{c}}
\ope{P}^+}}


\renewcommand{\leq}{\leqslant}
\renewcommand{\geq}{\geqslant}
\newcommand{\du}{\smash{\,\cup\kern-0.45em\raisebox{1ex}{$\cdot$}}\,\,}

\renewcommand{\bar}{\overline}

\hyphenation{co-retraction} \hyphenation{endo-dual-is-able}
\hyphenation{endo-primal} \hyphenation{dual-ise}
\hyphenation{dual-ises} \hyphenation{dual-isable}
\hyphenation{endo-morph-ism} \hyphenation{endo-morph-isms}
\hyphenation{homo-morph-ism} \hyphenation{homo-morph-isms}
\hyphenation{iso-morph-ism} \hyphenation{iso-morph-isms}
\hyphenation{in-ject-ive} \hyphenation{quasi-variety}
\hyphenation{sub-quasi-variety} \hyphenation{quasi-varieties}
\hyphenation{Bool-ean} \hyphenation{schizo-phrenic}



\renewcommand{\emptyset}{\varnothing}

\begin{document}

\title[]
{Distributive bilattices from the perspective of natural duality theory}

\keywords{distributive bilattice,  natural duality, Priestley duality, De Morgan algebra}
\subjclass[2010]{Primary: 
06D50, 
Secondary: 
08C20, 
06D30, 
03G25 
}

\author{L. M. Cabrer}
\email{lmcabrer@yahoo.com.ar}
\address{Mathematical Institute \\
University of Oxford\\
Radcliffe Observatory Quarter, Oxford OX2 6GG\\ UK}

\author{H. A. Priestley}
\email{hap@maths.ox.ac.uk}
\address{Mathematical Institute \\
University of Oxford\\
Radcliffe Observatory Quarter, Oxford OX2 6GG\\ UK}

\begin{abstract}  This paper provides a fresh perspective on the representation of distributive bilattices and of related varieties.  The techniques of natural
duality are employed to give, economically and in a uniform way, categories of
structures  dually equivalent to these varieties.
We relate our dualities to the product representations for bilattices and to pre-existing dual representations by a 
 simple 
translation process which is an instance of a more general 
mechanism for 
connecting   
 dualities based on Priestley duality to natural dualities.
Our approach gives us access to descriptions of algebraic/categorical properties of bilattices and also reveals how `truth' and `knowledge' may be seen as dual notions.
\end{abstract}

\maketitle

\section{Introduction} \label{sec:intro}

 This paper is the first of  three devoted to bilattices,
 the other two being 
\cite{CCP,CP2}.  
Taken together, our three papers  
  provide a  systematic treatment of
 dual representations via natural duality theory, showing  
that this theory  
applies in a uniform way to 
 a range of varieties 
having bilattice structure as a  unifying theme.  
The representations are based on hom-functors and hence the constructions are inherently functorial. 
The key theorems on which we call  
 are easy to apply,  in black-box fashion,  without the need to delve  into the  theory. 
Almost all of the natural duality theory  we employ 
can, if 
desired, 
be found  in the text by Clark and Davey~\cite{CD98}. 

The term bilattice, loosely,
refers to a set  $L$ equipped with   
two lattice orders, $\leq_t$ and $\leq_k$, subject to 
some compatibility requirement. 
The subscripts have the following connotations:  
$t$ measuring `degree of truth' 
and $k$  `degree of
knowledge'. 
As an algebraic structure, then, a bilattice carries two pairs 
of lattice operations:  $\land_t$ and $\lor_t$; $\land_k$ and $\lor_k$. The term distributive is  applied when all possible distributive laws hold amongst these four operations;  
distributivity imposes  strictly stronger compatibility between the two lattice structures  than the condition 
known as interlacing.
Distributive bilattices may be, but need not be, 
also assumed to have universal bounds for 
each order which are  treated as distinguished constants
(or, in algebraic terms, as nullary operations). 
  In addition,  a bilattice is  often, but not always, assumed  
to carry in addition an involutory unary operation $\neg$, thought of as modelling a negation. 
Historically, the investigation of bilattices (of all types) has been   tightly bound up with their potential role as 
models in artificial intelligence and with the study of associated 
logics. We note, by way of a sample,  the pioneering papers of Ginsberg \cite{Gin} and Belnap
\cite{ND1,ND2} 
and the more recent works 
\cite{AA1,RPhD,BR11}.
 We do not, except to a very limited extent in Section~\ref{Sec:Conclude}, 
address
logical aspects of bilattices in our work.

In 
this  
 paper we focus on   distributive bilattices,
with or  without bounds  and with or without negation.
In \cite{CP2} we consider varieties arising as expansions of 
those considered here, in particular distributive bilattices with both
negation and a  conflation operation. 
In \cite{CCP} we move outside  the realm of distributivity, and even 
outside the wider realm of interlaced bilattices, and study 
 certain quasivarieties generated by finite non-interlaced bilattices
arising in connection with default logics. 

The present paper is organised as follows.  Section~\ref{sec:DBilat}
formally introduces the varieties we shall study and establishes some basic properties.  Sections~\ref{sec:DB},~\ref{sec:DBU} and \ref{sec:DPB}
present our natural dualities for these varieties.  We preface these sections  by accounts of the relevant natural duality theory, 
tailored to 
our intended applications (Sections~\ref{piggyonesorted} and~\ref{sec:multi}).
Theory and  practice 
are  brought together in Sections~\ref{Sec:DisPiggyDual}
and~\ref{sec:prodrep}, in which  we demonstrate how our representation theory
relates to, and  illuminates,  results in the existing literature.  Section~\ref{Sec:Applications} is devoted to 
applications:
we exploit our natural dualities to establish a range of properties
of bilattices which are categorical in nature, for instance  
the determination of free objects and of unification~type.

We emphasise that our   approach 
differs in an
important  respect from that adopted by other authors. Bilattices have been very thoroughly  studied as algebraic structures 
 (see for example~\cite{RPhD} and the references therein).
 Central to the theory of distributive  bilattices, and more generally interlaced ones, is the theorem showing that such algebras 
can always be represented as products of pairs of lattices,  with the  structure 
determined from  the  factors 
(see \cite{P00} and \cite{BR11} for the bounded and unbounded cases, respectively, 
and the informative historical survey by Davey \cite{BD13} of the evolution of this oft-rediscovered result).
The product representation 
 is normally derived by performing quite 
extensive algebraic calculations.  
It is then used in a crucial way to obtain,  for those bilattice varieties which have bounded 
distributive lattice 
reducts,
 dual representations
which are based on Priestley duality \cite{MPSV,JR}.
Our starting point is different. For each class $\CA$  of algebras we study 
here and in \cite{CP2},   we first establish, by elementary arguments,  that  $\CA$ takes the form 
$\ISP(\M)$, where~$\M$ is  finite,  or, more rarely, 
$\ISP(\CM)$, where $\CM$ is a set of two finite algebras.  
(In \cite{CCP} we assume at the outset 
that $\CA$ is the quasivariety generated by  some finite algebra
in which we are interested.)
This gives us  direct access to the natural duality framework. 
From 
this perspective, the product representation is a consequence of the natural dual representation, and closely related to it. 
 For a  
reconciliation, in the distributive setting,
of  our  approach  and that of others and an 
 explanation of how these approaches differ, see 
Sections~\ref{sec:prodrep} and~\ref{Sec:Conclude}.

We may summarise as follows
what we achieve in this paper and 
in \cite{CCP,CP2}. 
For different varieties we call on different versions of the 
 theory of natural dualities.
Accordingly
our account  can, {\it inter alia},  
be read as a set of illustrated
tutorials on the natural duality methodology presented in a self-contained way.   
The examples we give will also be new 
to natural duality aficionados,   
but  for such readers we anticipate that the primary interest of our work 
will be  
its contribution 
to 
the understanding of the interrelationship between natural and Priestley-style dualities  
for finitely generated quasivarieties of distributive lattice-based algebras. 
For this we  exploit the piggybacking technique,  building on work initiated in our paper \cite{CPcop} and 
our constructions  elucidate precisely  
how product representations come about.
All our natural dual representations are new, as are our Priestley-style dual representations in 
the unbounded cases.  
Finally we draw attention to the
remarks with which we end the paper 
drawing  parallels  between the special role 
the knowledge order plays 
in our theory  and 
the role  this order  plays in
Belnap's semantics for a four-valued logic.

\section{Distributive pre-bilattices and bilattices} \label{sec:DBilat}

 We begin by giving  basic definitions and establishing the terminology we shall adopt henceforth.  
 We warn that  the definitions 
(bilattice, pre-bilattice, etc.)  are not used in a  consistent way in the literature, and that notation varies. 
Our choice of symbols   for lattice  operations  
enables us  
to  keep overt which operations relate to truth  and which to 
knowledge.  Alternative notation includes $\vee$ and $\wedge$ in place of $\lor_t$ and $\land_t$, and 
$\oplus$ and $\otimes$ in place of $\lor_k$ and~$\land_k$.  

We define first  the most general class of algebras we shall consider.
We shall say that an algebra 
$\A = (A; \lor_t,\land_t,\lor_k,\land_k)$ is an 
\defn{unbounded
 distributive pre-bilattice}  if each of the reducts 
$(A; \lor_t,\land_t)$ and $(A;\lor_k,\land_k)$ is a lattice 
and each of $\lor_t$, $\land_t$, $\lor_k$ and $\land_k$ 
distributes over each of the other three.  
The class of such algebras is a variety, which we denote by 
$\DPBU$.  Each of the varieties we consider in this paper and in \cite{CP2}
will be obtained  from $\DPBU$ by expanding the language 
by adding constants, or additional unary or binary operations.   

Given $\A \in \DPBU$, we let 
$\A_t = (A;\lor_t, \land_t)$ and refer to it as the \defn{truth lattice 
reduct} of $\A$ (or $t$-lattice for short);  likewise we have a \defn{knowledge lattice reduct} (or $k$-lattice)
$\A_k = (A;\lor_k,\land_k)$.

The following lemma is an elementary consequence of the definitions.
We record it here to emphasise that no structure 
beyond that of an unbounded distributive pre-bilattice is involved.

\begin{lem}  \label{lem:cong} 
 Let $\A= (A;\lor_t,\land_t, \lor_k,\land_k) \in \DPBU$.
 Then, for $a,b,c \in A$,
\begin{newlist}
\item[{\upshape (i)}] $ a \leq _k b \leq _k c $ implies
$ a \land_t c \leq_t  b \leq_ t a \lor_t c $;
\item[{\upshape (ii)}] $a \land_t b \leq_t a  \star_k b \leq_t a \lor_t b$,
where $\star_k$ denotes either $\land_k$ or  $\lor_k$.
\end{newlist}

Corresponding statements hold with $k$ and $t$ interchanged. 
\end{lem}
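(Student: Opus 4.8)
The plan is to reduce the statement to two inequalities by exploiting the symmetries of $\DPBU$, and then to obtain each of these from a short run of distributive laws together with lattice absorption and idempotency. Observe that the axioms defining $\DPBU$ are invariant under each of three relabellings of the basic operations: interchanging $\lor_t$ and $\land_t$; interchanging $\lor_k$ and $\land_k$; and swapping the subscripts $t$ and $k$ everywhere. Each induces a bijection of $\DPBU$ onto itself. Interchanging $\lor_t$ and $\land_t$ reverses $\leq_t$ but fixes $\leq_k$, hence fixes the hypotheses; so within (i) the two displayed inequalities are equivalent to one another, and within (ii) the inequalities $a\land_t b\leq_t a\star_k b$ and $a\star_k b\leq_t a\lor_t b$ are equivalent. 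Interchanging $\lor_k$ and $\land_k$ carries the case $\star_k=\land_k$ of (ii) to the case $\star_k=\lor_k$. Finally the $t\leftrightarrow k$ symmetry yields the corresponding statements with $k$ and $t$ interchanged. So it suffices to prove (a) that $b\leq_t a\lor_t c$ whenever $a\leq_k b\leq_k c$, and (b) that $a\land_t b\leq_t a\land_k b$.

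Part (b) is immediate: distributing $\land_t$ over $\land_k$ and then using absorption and idempotency in the two lattice reducts,
\[
(a\land_t b)\land_t(a\land_k b)=\bigl((a\land_t b)\land_t a\bigr)\land_k\bigl((a\land_t b)\land_t b\bigr)=(a\land_t b)\land_k(a\land_t b)=a\land_t b ,
\]
whence $a\land_t b\leq_t a\land_k b$.

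For part (a), set $e=b\land_t(a\lor_t c)$; since $e\leq_t b$ it suffices to prove $e=b$, and I would do this in two steps, using the hypotheses one at a time. Step 1 uses only $b\leq_k c$, that is $c=b\lor_k c$. Then $a\lor_t c=a\lor_t(b\lor_k c)=(a\lor_t b)\lor_k(a\lor_t c)$ since $\lor_t$ distributes over $\lor_k$; substituting this inside $e$ and distributing $\land_t$ over $\lor_k$,
\[
e=b\land_t\bigl((a\lor_t b)\lor_k(a\lor_t c)\bigr)=\bigl(b\land_t(a\lor_t b)\bigr)\lor_k\bigl(b\land_t(a\lor_t c)\bigr)=b\lor_k e ,
\]
using the absorption law $b\land_t(a\lor_t b)=b$; so $b\leq_k e$. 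Step 2 uses only $a\leq_k b$, that is $a=a\land_k b$. Then $a\lor_t c=(a\land_k b)\lor_t c=(a\lor_t c)\land_k(b\lor_t c)$ since $\lor_t$ distributes over $\land_k$; substituting inside $e$ and distributing $\land_t$ over $\land_k$,
\[
e=b\land_t\bigl((a\lor_t c)\land_k(b\lor_t c)\bigr)=\bigl(b\land_t(a\lor_t c)\bigr)\land_k\bigl(b\land_t(b\lor_t c)\bigr)=e\land_k b ,
\]
using $b\land_t(b\lor_t c)=b$. Combining the two steps, $e=e\land_k b=b$ by Step 1, which is precisely $b\leq_t a\lor_t c$.

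I expect the only real difficulty to be finding the route in part (a). A single substitution — replacing $c$ throughout by $b\lor_k c$, or $a$ throughout by $a\land_k b$ — just returns a trivial identity, or an inequality in the knowledge order rather than the truth order; the point is to extract the auxiliary fact $b\leq_k e$ from one hypothesis, the identity $e=e\land_k b$ from the other, and only then combine them. Verifying that every manipulation above uses only the distributive, absorption and idempotency laws available in $\DPBU$ is then routine.
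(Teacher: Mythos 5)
Your proof is correct. The paper states this lemma without proof, describing it only as ``an elementary consequence of the definitions,'' so there is no argument of the paper's to compare against; your write-up is a legitimate filling-in of that omission. The three symmetry reductions are valid --- each relabelling (swapping $\lor_t$ with $\land_t$, swapping $\lor_k$ with $\land_k$, swapping the subscripts $t$ and $k$) permutes the defining identities of $\DPBU$, since the lattice axioms are self-dual and the twelve pairwise distributive laws are preserved --- and the two residual computations use only distributivity, absorption and idempotency as claimed: part (b) checks directly, and in part (a) the identities $e=b\lor_k e$ and $e=e\land_k b$ obtained from the two hypotheses combine by antisymmetry of $\leq_k$ to give $e=b$, i.e.\ $b\leq_t a\lor_t c$.
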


  As we have indicated in 
the introduction,
 we shall wish to prove, 
for each bilattice variety~$\CA$ we study, that 
$\CA$ is finitely generated  as a quasivariety. 
This amounts to showing that there exists a finite set $\CM$ of 
finite algebras in~$\CA$ such that, for each $\A\in \CA$ and $a \ne b $ in
$\A$, there is $\M \in \CM$ and a 
$\CA$-homomorphism $h \colon \A \to \M$ with $h(a) \ne h(b)$. 
($\CM$ will 
consist 
 of  a single subdirectly irreducible
algebra 
or at most two such algebras.)
 This separation property is linked to the  existence of particular quotients of the algebras in~$\CA$.
Accordingly we are led to investigate congruences.
We start with a known  result. 
 Our
 proof is direct and elementary:  it uses 
nothing more than  
the  distributivity properties of  the $t$- and $k$-lattice operations, together 
with
Lemma~\ref{lem:cong} and
basic facts about lattice congruences given, for example,
in \cite[Chapter~6]{ILO2}. 
(Customarily the lemma would be   obtained as a spin-off from the product representation theorem as this applies to distributive bilattices.)
 
\begin{prop} \label{lem:cong2} Let $\A = (A;\lor_t,\land_t, \lor_k, 
\land_k)$ be an unbounded  distributive pre-bilattice.
   Let $\theta\subseteq A^2$ be an equivalence relation.  Then the following statements are equivalent:
\begin{newlist}
\item[{\upshape (i)}] $\theta$ is a congruence of $\A_t =(A;\lor_t,\land_t)$;
\item[{\upshape (ii)}] $\theta$ is a congruence of $\A_k = (A;\lor_k,\land_k)$;
\item[{\upshape (iii)}]  $\theta$ is a congruence of 
 $\A$.
\end{newlist}
\end{prop}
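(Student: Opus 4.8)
The plan is to prove the cyclic chain of implications (iii)$\Rightarrow$(i)$\Rightarrow$(ii)$\Rightarrow$(iii); by the symmetry noted after Lemma~\ref{lem:cong} (interchanging $t$ and $k$), the implication (i)$\Rightarrow$(ii) will also yield (ii)$\Rightarrow$(i), so in fact it suffices to prove (iii)$\Rightarrow$(i) and (i)$\Rightarrow$(ii). The first of these is trivial: if $\theta$ respects all four operations then a fortiori it respects $\lor_t$ and $\land_t$. So the whole content is in showing that a lattice congruence of $\A_t$ is automatically compatible with $\lor_k$ and $\land_k$ as well.

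For (i)$\Rightarrow$(ii), suppose $\theta$ is a congruence of $\A_t$. Recall the standard fact about lattice congruences (\cite[Chapter~6]{ILO2}) that $\theta$ is determined by, and closed under, the following: $a\mathrel{\theta}b$ iff $(a\land_t b)\mathrel{\theta}(a\lor_t b)$, and moreover $\theta$ is convex with respect to $\leq_t$ and is a sublattice of $A\times A$. Now take $a\mathrel{\theta}a'$ and $b\mathrel{\theta}b'$; I want $(a\star_k b)\mathrel{\theta}(a'\star_k b')$ for $\star_k\in\{\land_k,\lor_k\}$. The key tool is Lemma~\ref{lem:cong}(ii): for any $x,y$ we have $x\land_t y\leq_t x\star_k y\leq_t x\lor_t y$, so $x\star_k y$ is sandwiched $t$-between the meet and join of $x$ and $y$. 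The strategy is therefore: first reduce to showing that $a\mathrel{\theta}a'$ implies $(a\star_k b)\mathrel{\theta}(a'\star_k b)$ (one-variable substitution), then iterate in the second coordinate. For the one-variable step, consider $d=a\land_t a'$ and $e=a\lor_t a'$; since $\theta$ is a lattice congruence, $d\mathrel{\theta}e$, and both $a,a'$ lie $t$-between $d$ and $e$. One then shows $d\star_k b$ and $e\star_k b$ are $\theta$-related and that $a\star_k b$, $a'\star_k b$ are $t$-squeezed between something $\theta$-equivalent built from $d\star_k b$ and $e\star_k b$; convexity of $\theta$ then finishes it. Concretely, using distributivity of $\star_k$ over $\land_t$ and $\lor_t$, one has $(d\star_k b)\mathrel{\theta}(e\star_k b)$ directly (apply the $t$-lattice congruence to $d\mathrel{\theta}e$ after expanding, or just note $\star_k$-with-fixed-$b$ followed by $\land_t b'$/$\lor_t b'$ stays in $\theta$), and since $d\leq_t a\leq_t e$ gives, via distributivity again, $d\star_k b\leq_t a\star_k b\leq_t e\star_k b$ and likewise for $a'$, convexity of $\theta$ forces $a\star_k b\mathrel{\theta}a'\star_k b$.

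The main obstacle is precisely this one-variable substitution step: getting from "$a$ and $a'$ are $\theta$-related (hence $t$-squeezed between a $\theta$-pair $d,e$)" to "$a\star_k b$ and $a'\star_k b$ are $\theta$-related" requires combining the $\leq_t$-convexity of lattice congruences with the distributive laws linking $\star_k$ to the $t$-operations — this is where Lemma~\ref{lem:cong}(ii) and the hypothesis that \emph{all} distributive laws hold are used essentially, and it is the only place any real calculation occurs. Once the one-variable step is in hand, substituting in the second coordinate is the mirror-image argument, and transitivity of $\theta$ glues the two substitutions together to give $(a\star_k b)\mathrel{\theta}(a'\star_k b)\mathrel{\theta}(a'\star_k b')$. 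This establishes that $\theta$ is a congruence of $\A_k$, completing (i)$\Rightarrow$(ii); the $t\leftrightarrow k$ symmetry then gives (ii)$\Rightarrow$(i), and combining all three arrows with the trivial (iii)$\Rightarrow$(i) and the evident (i)$\wedge$(ii)$\Rightarrow$(iii) closes the equivalence.
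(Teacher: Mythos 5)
Your overall architecture is sound: (iii)$\Rightarrow$(i) is trivial, (ii)$\Rightarrow$(i) does follow from (i)$\Rightarrow$(ii) by the $t$/$k$ symmetry of the axioms, and your reduction of (i)$\Rightarrow$(ii) to a one-variable substitution for the comparable pair $d=a\land_t a'\mathrel{\theta}e=a\lor_t a'$ is correct, as is the use of $\leq_t$-monotonicity of $\star_k$ (which does follow from distributivity) together with $\leq_t$-convexity of the blocks. The genuine gap is exactly at the step you yourself flag as the main obstacle: the claim that $d\mathrel{\theta}e$ and $d\leq_t e$ give $(d\star_k b)\mathrel{\theta}(e\star_k b)$ ``directly''. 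Neither of your suggested justifications works. Distributivity of $\star_k$ over $\land_t$ and $\lor_t$, applied to $d=d\land_t e$, only re-derives the inequality $d\star_k b\leq_t e\star_k b$; and the sandwich $d\land_t b\leq_t d\star_k b\leq_t d\lor_t b$ from Lemma~\ref{lem:cong}(ii) does not place $d\star_k b$ and $e\star_k b$ inside a single $t$-interval with $\theta$-related endpoints, since $[d\land_t b,\,d\lor_t b]_t$ typically meets many blocks. In the unbounded setting $\star_k$ is not a $t$-lattice term in its arguments (there are no constants $0_k,1_k$ to invoke the $90^\circ$ Lemma), so ``applying the $t$-congruence after expanding'' is circular: the comparable case is exactly as strong as the statement being proved.

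What is missing is an identity genuinely tying the $k$-operations to the $t$-operations. The paper derives $a\land_t b=(a\lor_k b)\land_t(a\land_k b)$ and $a\lor_t b=(a\lor_k b)\lor_t(a\land_k b)$ and uses them, together with cancellation in the distributive quotient $\A_t/\theta$, to verify the quadrilateral property $a\mathrel{\theta}(a\land_k b)\Leftrightarrow b\mathrel{\theta}(a\lor_k b)$; combined with the $k$-convexity and $k$-sublattice facts from Lemma~\ref{lem:cong}, this yields (ii) via the standard characterization of lattice congruences in \cite[Chapter~6]{ILO2}. Your comparable-case claim can be rescued by the same device: in $\A_t/\theta$ one has $[d\lor_k b]\leq[e\lor_k b]$ and $[d\land_k b]\leq[e\land_k b]$, while the two pairs have equal meets (both equal to $[d\land_t b]=[e\land_t b]$ by the identities above) and equal joins, and in a distributive lattice $p\leq p'$, $q\leq q'$, $p\land q=p'\land q'$, $p\lor q=p'\lor q'$ force $p=p'$ and $q=q'$. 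Without some such calculation your argument is incomplete at its central point.
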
 
\begin{proof}  It will suffice, by symmetry, to 
prove (i) $\Rightarrow$ (ii).
 So assume that~(i) holds.  
Since 
$\theta$ is a congruence of 
$(A;\lor_t,\land_t)$,  
the $\theta$-equivalence classes are convex sublattices with respect to 
the $\leq_t$ order. We first observe that from Lemma~\ref{lem:cong}(i) each  equivalence class is convex 
with respect to the $\leq_k$ order,
and from 
Lemma~\ref{lem:cong}(ii) that each equivalence class is a sublattice of $(A;\lor_k,\land_k)$.   

Finally we need to establish the quadrilateral property:
  \[
   a\, \theta \, ( a\land_k b ) \Longleftrightarrow b \, \theta \,  (a \lor_k b).
\] 
For the forward direction  
observe that the distributive 
laws and  Lemma~\ref{lem:cong}(ii) (swapping $t$ and $k$) imply
\begin{multline*}
 a\land_t b=(a\lor_k b)\land_k(a\land_t b)=(a\land_t (a\land_k b))\lor_k (b\land_t (a\land_k b)) \\
=(a\land_k (a\land_t b))\lor_k (b\land_k (a\land_t b))
=(a\lor_k b )\land_k (a\land_t b).
\end{multline*} 
Combining this with $a\, \theta \, ( a\land_k b )$ and with the fact that $\theta$ is a congruence of $(A;\lor_t,\land_t)$, we have 
 $a\land_t b\, \theta \,  (a\lor_k b)\land_t a$. Replacing $\land_t$ by $\lor_t$
 in the previous argument, we obtain $a\lor_t b\, \theta \,  (a\lor_k b)\lor_t a$. 
This proves that
 \[
[a]_{\theta}\land_t [b]_{\theta}= [a]_{\theta}\land_t [a\lor_k b]_{\theta}\quad \mbox{and} \quad [a]_{\theta}\lor_t [b]_{\theta}= [a]_{\theta}\lor_t [a\lor_k b]_{\theta}.
\]
 Since 
 $(A;\land_t, \lor_t)/\theta$ is distributive, 
 $[b]_{\theta}=[a\lor_k b]_{\theta}$, that is, $b\,\theta\, a\lor_k b$.
\end{proof} 

The following consequences of Proposition~\ref{lem:cong2}
will be important  later. Take  an unbounded 
 distributive pre-bilattice $\A$ 
and a filter $F$ of $\A_t$.  Then   $F$ is a convex sublattice of $\A_k$. 
If a map $h\colon A  \to \{0,1\}$
acts as
 a lattice homomorphism from $\A_t$ into
the two-element lattice~$\two$, then $h$ is a lattice homomorphism from $\A_k$ into either~$\two$ or  its dual lattice $\two^{\partial}$. 
Hence  each prime filter for $\A_t$ is either a prime filter or a prime ideal for $\A_k$ and vice versa. These results were first proved in \cite[Lemma 1.11 and Theorem 1.12]{JR} and underpin the development of the duality theory
presented there.

We now wish to consider the situation in which a distributive pre-bilattice has universal bounds with respect to its $\leq_t$ and $\leq_k$ orders.
We  recall a classic result,  known as the $90^\circ$~Lemma. 
 The result 
has its origins in \cite{BK47} (see the comments in  \cite[Section 3]{JM}
and  
 also \cite[Theorem~3.1]{P00}).

 \begin{lem} \label{90deg}  Let $(L; \lor_t, \land_t, \lor_k,\land_k)$
be an unbounded  distributive pre-bilattice.  Assume 
  that 
$(L; \leq_k)$ 
has a bottom element, $0_k$,
and a top element, 
$1_k$.
\begin{newlist}  
\item[{\rm (i)}] 
For all $a,b \in L$, 
\begin{align*}
a \vee_k b &= ((a \wedge_t b)\wedge_t 0_k 
) \vee_t ((a \vee_t b)\wedge_t 1_k 
),\\
a \wedge _k b &= ((a \wedge_t b)\wedge_t 
1_k 
) \vee_t ((a  \vee_t b)\wedge_t 0_k 
).
\end{align*}
\item[{\rm (ii)}] For all $a \in L$,
\[
0_k \land_t  1_k 
 \leq_t a \leq_t 0_k \lor_t 1_k, 
\]
so that $(L,\leq_t)$ also has universal bounds, and in the lattice 
$(L;\lor_t,\land_t)$, the elements 
$0_k$ and $1_k$ 
form a complemented pair. 
  \end{newlist}
\end{lem}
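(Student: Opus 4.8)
The plan is to derive both parts using only Lemma~\ref{lem:cong} and the distributive laws, disposing of part~(ii) first and then bootstrapping to part~(i). Part~(ii) is immediate: since $0_k\leq_k a\leq_k 1_k$ for every $a\in L$, Lemma~\ref{lem:cong}(i) with $0_k,a,1_k$ in the roles of $a,b,c$ gives $0_k\land_t 1_k\leq_t a\leq_t 0_k\lor_t 1_k$; as $0_k\land_t 1_k$ and $0_k\lor_t 1_k$ themselves lie in $L$, they are the least and greatest elements of $(L;\leq_t)$, say $0_t$ and $1_t$, so $0_k$ and $1_k$ form a complemented pair in $(L;\lor_t,\land_t)$. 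For later use I record the resulting decomposition: by $t$-distributivity, $x=x\land_t(0_k\lor_t 1_k)=(x\land_t 0_k)\lor_t(x\land_t 1_k)$ for every $x\in L$.

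The heart of the argument is an auxiliary claim: for $p,q\in L$, if $p,q\leq_t 0_k$ then $p\lor_k q=p\land_t q$ and $p\land_k q=p\lor_t q$, while if $p,q\leq_t 1_k$ then $p\lor_k q=p\lor_t q$ and $p\land_k q=p\land_t q$. Each identity is obtained by proving two $\leq_k$-inequalities. One inequality is handed to us directly by the $k/t$-interchanged form of Lemma~\ref{lem:cong}(ii), which gives $p\land_k q\leq_k p\land_t q\leq_k p\lor_k q$ and $p\land_k q\leq_k p\lor_t q\leq_k p\lor_k q$. For the opposite inequality one runs the $k/t$-interchanged form of Lemma~\ref{lem:cong}(i) along a three-term $t$-chain passing through the relevant bound: e.g.\ to show $p\lor_k q\leq_k p\land_t q$ when $p,q\leq_t 0_k$, apply it to the chain $p\land_t q\leq_t p\leq_t 0_k$, obtaining $(p\land_t q)\land_k 0_k\leq_k p\leq_k(p\land_t q)\lor_k 0_k$; since $0_k$ is the $k$-least element the outer terms collapse to $0_k$ and to $p\land_t q$, so $p\leq_k p\land_t q$, and symmetrically $q\leq_k p\land_t q$. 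The three remaining identities go the same way, with $1_k$ and ``$k$-greatest'' replacing $0_k$ and ``$k$-least'' where appropriate.

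To finish, apply the decomposition from part~(ii) to $x=a\lor_k b$ and then distribute $\land_t$ over $\lor_k$: the first summand becomes $(a\land_t 0_k)\lor_k(b\land_t 0_k)$, which by the auxiliary claim equals $(a\land_t 0_k)\land_t(b\land_t 0_k)=(a\land_t b)\land_t 0_k$, while the second becomes $(a\land_t 1_k)\lor_k(b\land_t 1_k)=(a\land_t 1_k)\lor_t(b\land_t 1_k)=(a\lor_t b)\land_t 1_k$; this is the asserted formula for $a\lor_k b$. The formula for $a\land_k b$ comes out identically, distributing $\land_t$ over $\land_k$ instead and invoking the other two identities of the auxiliary claim. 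I expect the only genuine obstacle to be the auxiliary claim, and within it the observation that a $t$-chain terminating at $0_k$ (or $1_k$), together with the extremality of that element for $\leq_k$, is exactly what forces $\lor_k$ and $\land_k$ to coincide with the $t$-operations on $\{x\in L\mid x\leq_t 0_k\}$ and $\{x\in L\mid x\leq_t 1_k\}$.
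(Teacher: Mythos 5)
Your proof is correct. Note that the paper does not actually prove Lemma~\ref{90deg}: it states the result and refers to Birkhoff--Kiss, Jung--Moshier and Pynko, so there is no in-house argument to compare against. Your derivation is self-contained and uses exactly the toolkit the paper makes available at that point, namely Lemma~\ref{lem:cong} (in its $k$/$t$-interchanged form) and the distributivity of each of the four operations over the others. Part~(ii) is indeed immediate from Lemma~\ref{lem:cong}(i) applied to $0_k\leq_k a\leq_k 1_k$. The substance is your auxiliary claim, and I have checked all four identities: the inequality $p\land_k q\leq_k p\star_t q\leq_k p\lor_k q$ comes from the interchanged Lemma~\ref{lem:cong}(ii), and the reverse inequalities follow from the interchanged Lemma~\ref{lem:cong}(i) applied to the chains $p\land_t q\leq_t p\leq_t 0_k$ (resp.\ $p\leq_t p\lor_t q\leq_t 0_k$, and their $1_k$-analogues), where the extremality of $0_k$ (resp.\ $1_k$) for $\leq_k$ collapses the outer terms as you say. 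The concluding computation -- decompose $x=(x\land_t 0_k)\lor_t(x\land_t 1_k)$, distribute $\land_t$ over $\lor_k$ or $\land_k$, and apply the auxiliary claim on the two principal $t$-ideals -- is valid and yields exactly the stated formulas (for $a\land_k b$ up to commutativity of $\lor_t$). Conceptually, your auxiliary claim isolates precisely the structural fact underlying the product representation: on $\{x\mid x\leq_t 1_k\}$ the two orders agree, while on $\{x\mid x\leq_t 0_k\}$ they are opposite; making this explicit is a nice bonus of your route.
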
 
The import of Lemma~\ref{90deg}(i)
is  
that $\lor_k$ and $\land_k$ are term-definable from 
$\lor_t$ and $\land_t$ and the universal bounds of the $k$-lattice; henceforth when these
 bounds are included in the type we  shall exclude $\lor_k$ and $\land_k$ from it.
When we refer to an algebra $\A = (A;\lor_t,\land_t,\lor_k,\land_k)$
as 
being an unbounded distributive pre-bilattice we do not exclude 
the possibility that one, and hence both, of $\A_k$ and~$\A_t$ has universal bounds; we are simply saying that bounds are 
not included in the algebraic language. 
We 
say an algebra $(A; \lor_t,\land_t,
0_t,1_t,0_k,1_k)$
is a 
\defn{distributive pre-bilattice}
if   $0_t$, $1_t$, 
$0_k$ and~$1_k$
 are nullary operations, and the algebra 
$(A; \lor_t,\land_t,\lor_k,\land_k)$ belong to $\DPBU$, where 
$\lor_k$ and  $\land_k$ are defined from $\lor_t$, $\land_t$,~$0_k$ and~$1_k$ as in Lemma~\ref{90deg}(i), 
and $0_t$, $1_t$ and 
$0_k$, $1_k$
act as  $0$, $1$  in the lattices
$\A_t$ and~$\A_k$, respectively.  

We now  add 
a negation operation.    If 
${\A =
(A;\lor_t,\land_t, \lor_k, \land_k)}$  belongs to $\DPBU$ and 
carries an involutory unary operation $\neg$ which 
is interpreted as a dual  endomorphism 
of $(A; \lor_t,\land_t)$ and an endomorphism of 
$(A; \lor_k,\land_k)$, then we  
 call 
$(A; \lor_t,\land_t,\lor_k,\land_k,\neg)$ an 
\defn{unbounded distributive bilattice}.   
Similarly, 
an algebra 
$(A;\lor_t,\land_t, 
\neg, 0_t,1_t,0_k,1_k)$ is a \defn{distributive bilattice} if the negation-free 
reduct is a distributive pre-bilattice, 
 and $\neg$ is an involutory  dual
endomorphism of the bounded $t$-lattice reduct and   
endomorphism 
of the  bounded $k$-lattice reduct.  These conditions include 
the requirements that~$\neg$ interchanges $0_t$ and~$1_t$ and fixes $0_k$ and $1_k$.

For ease of reference we present a list of the varieties we consider 
in this paper,  in the order in which we shall study them. 
\begin{longnewlist}
\item[$\DB$:]  {\bf distributive bilattices}, for which  we include in the type 

$\lor_t$, $\land_t$, $\neg$, $0_t$, $1_t$, $0_k$, $1_k$;
\item[$\DBU$:]  {\bf  unbounded distributive bilattices}, having as 
basic operations

 $\lor_t$, $\land_t$, $\lor_k$, $\land_k$, $\neg$; 
\item[$\DPB$:] {\bf distributive pre-bilattices}, 
having as basic operations 

$\lor_t$, $\land_t$, $0_t$, $1_t$, $0_k$, $1_k$;
\item[$\DPBU$:] {\bf unbounded distributive pre-bilattices},
having as basic operations
$\lor_t, \land_t$, $\lor_k$, $\land_k$.
\end{longnewlist}

We shall denote by $\CCD$ the variety of distributive lattices 
in which universal bounds are included in the type, and by $\DU$
the variety of unbounded distributive lattices.  
For any $\A \in \DB$ or $\DPB$,  
its bounded truth lattice $\A_t = (A; \lor_t,\land_t, 0_t,1_t)$ 
is a 
 $\CCD$-reduct of $\A$.
 Likewise the truth lattice $\A_t = (A;\lor_t,\land_t)$ 
provides a reduct in
$\DU$
for any $\A \in \DBU$ or $\DPBU$.
 We remark also 
that each member of $\DB$
has a  reduct in the variety~$\cat{DM}$ of De Morgan algebras, 
and that each algebra  in $\DBU$ has a reduct in the variety of 
De Morgan lattices;  in each case the reduct is obtained by suppressing the knowledge operations. 
This remark explains the preferential treatment  
we always give to truth over knowledge when forming reducts.

Throughout 
we shall  
when required treat  any variety 
as a category, by taking as morphisms  all 
homomorphisms. 
 Given a variety~$\CA$ whose algebras have 
reducts (or more generally term-reducts)
in $\CCD$ 
 obtained by deleting certain operations, 
we shall make use of  the  associated forgetful
functor from $\CA$ into $\CCD$, 
defined to act as the identity map on morphisms.   
(We shall later refer to $\CA$ as being \defn{$\CCD$-based}.)
Specifically  we define a forgetful functor $\U \colon \DB \to \CCD$,
for which  
 $\U(\A)=\A_t$ for any $\A \in \CCD$.
We also have a functor, again  denoted~$\U$ and defined in the same way,
from $\DPB$ to $\CCD$. 
 Likewise there is 
a functor
$\Uu$ from  $\DBU$
or from $\DPBU$ into $\DU$ 
 which sends an algebra to its truth lattice.

\begin{figure}  [ht]
\includegraphics[scale=1, trim=26  670  26 50]{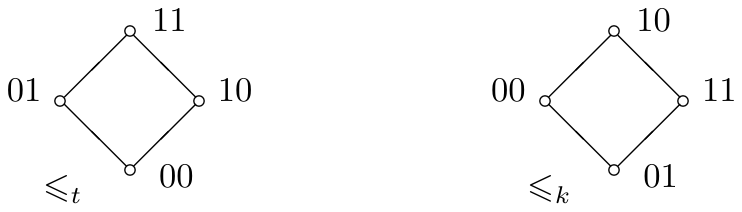}
\caption{The $t$- and $k$-lattice reducts of $\four$ and $\fourU$} \label{strings}
\end{figure}

We now  recall the best-known 
(pre-)bilattice of all, 
 that known as 
$\mathcal{FOUR}$.
We 
consider the set $V=\{0,1\}^2$ and,
to simplify later notation, 
shall  denote its
elements by binary strings.  
We define  lattice orders $\leq_t$ and $\leq_k$ on $V$  as shown in Figure~\ref{strings};  
 we draw lattices  in the 
manner traditional in lattice theory.  
(In the literature of  bilattices,  the four-element 
pre-bilattice  is customarily depicted via an amalgam  of the lattice diagrams in Figure~\ref{strings}, with the two orders indicated vertically (for knowledge)
and horizontally (for truth); 
virtually every paper  on bilattices contains this figure and we do not reproduce it here.)

We may add 
 truth constants 
$0_t = 00$ 
and  $1_t = 11$ and knowledge constants $0_k = 01$
and $1_k = 10$ to $\mathcal{FOUR}$ to 
obtain a member of $\DPB$. 
The structure 
$\mathcal{FOUR}$  also supports a negation $\neg$ which switches $11$ and $00$ 
and fixes $01$ and $10$.  
The four-element distributive bilattice and its unbounded counterpart play a distinguished role in what follows.  Accordingly we 
 define  
\begin{align*} 
\four & = (\{ 00,11,01,10\}; \lor_t, \land_t, \neg, 0_t,1_t,0_k,1_k) \text{ and} \\
\fourU  &= (\{00,11,01,10\}; \lor_t, \land_t,\lor_k,\land_k, \neg).
\end{align*} 
These belong, respectively, to $\DB$ and to $\DBU$.

There are two non-isomorphic  two-element 
 distributive pre-bilattices without bounds.
 One, denoted $\twoU^{+}$,  has underlying set 
$\{0,1\}$, and  the $t$-lattice structure and the $k$-lattice 
structure both coincide with that of the two-element lattice 
$\two= (\{ 0,1\}; \lor, \land)$ in which $0 < 1$.  
The  other, denoted  $\twoU^{-}$,  has $\two$ as its $t$-lattice reduct and the order dual
$\two^\partial$ as its $k$-lattice reduct.   
If we include bounds, we must have 
$0_t = 0_k = 0$ and 
$1_t = 1_k=1$ 
if $\leq_k$ and $\leq_t$ coincide 
 and 
$0_t = 1_k = 0$ and 
$1_t = 0_k=1$ if $\leq_k$ coincides with $\geq_t$.

In neither the bounded nor the unbounded case do we have a two-element
algebra which supports an involutory negation which 
preserves $\land_k$ and $\lor_k$ and interchanges  
$\lor_t$ and $\land_t$.
Hence   neither $\DBU$ nor 
$\DB$  
contains a two-element algebra.  Similarly, if 
either variety contained a three-element algebra, having  universe $\{ 0,a,1\}$, 
with $0 <_t a <_t 1$, then~$\leq_k$ would have to coincide
with either  $\leq_t$ or $\geq_t$.   The only 
 involutory dual endomorphism of the 
$t$-reduct of the chain
swaps $0$ and $1$ and fixes $a$, and this map is not order-preserving with respect to~$\leq_k$. 
 We conclude that, whether or not bounds are included in the type, there is no non-trivial distributive bilattice of cardinality less than
four.  
  Hence, the $90^\circ$
Lemma implies that $\four$ and
$\fourU$ are the only four-element
 algebras in $\DB$ and $\DBU$, respectively.

As noted above, 
to
derive
a natural duality 
for any  one 
of the varieties  in which we are interested, we need to
express the variety $\CA$ in question
as a finitely generated quasivariety.  Specifically,
 we need to find  a finite set 
$\CM$  of finite algebras  such that
 $\CA = \ISP(\CM)$.  We shall prove  in subsequent sections,
with the aid of  Proposition~\ref{lem:cong2},
that 
\begin{alignat*}{2}
 \DB  &= \ISP(\four), \qquad  \qquad & \DPB &= \ISP(\two^+,\two^-),\\ 
\DBU &= \ISP(\fourU),   & \DPBU & = \ISP(\twoU^+,\twoU^-).
\end{alignat*}
Corresponding results hold for
the varieties we consider in \cite{CP2}.  
Such
results  are  central to our enterprise. 
All are elementary in that the proofs use  
a minimum of
bilattice theory  and   
none of the algebraic structure theorems  for bilattices  is needed.  
(There is  a close connection between our assertions above and 
the identification of the subdirectly irreducible algebras
 in the varieties concerned. 
 The latter 
has traditionally been handled 
by first
proving a product representation theorem.  We reiterate that
we prove our claims directly, by elementary~means.)

\section{The natural duality framework}\label{piggyonesorted}
As indicated in Section~\ref{sec:intro}, we shall introduce
natural
 duality machinery in the form that is simplest to apply to  each of the 
varieties we consider. 

We first  
consider $\CA = \ISP(\M)$, where $\M$ is a finite algebra with a lattice reduct.
 We shall aim to define an \defn{alter ego} $\MT$ for $\M$ which will serve to generate
a category $\CX$ dually equivalent to~$\CA$.
The alter ego  will be  
a discretely topologised structure $\MT$ on the same universe $M$ as~$\M$ 
and will be 
equipped with a set $R$ of relations 
which are \defn{algebraic} in the sense that each member  
of $R$ is a 
subalgebra of  some finite power $\M^n$ of~$\M$. 
 (Later we shall need also to allow for nullary operations,
but relations
suffice in the simplest cases we consider.)
We define~$\CX$ to be the topological quasivariety
$\IScP (\MT)$ generated by~$\MT$, that is, the class of isomorphic copies of closed substructures
of non-empty powers of $\MT$; the empty structure is 
included.
 The structure of the alter 
ego is lifted pointwise in the obvious way.  
We denote the lifting of $r \in R$ to a member $\X $ of $ \CX$ by $r^{\X}$.
We then have well-defined
contravariant  
 functors $\D \colon \CA \to \CX$ and $\E \colon 
\CX \to \CA$ defined as follows:
\begin{alignat*}{3}
&\text{on objects:} & \hspace*{2.5cm}  &\D \colon  \A \mapsto  \CA(\A,\M), 
  \hspace*{2.5cm}  \phantom{\text{on objects:}}&&\\
&\text{on morphisms:}  & &  \D \colon  x \mapsto - \circ x,&& \\
\shortintertext{where  $\CA(\A,\M)$  is seen as
 a closed
 substructure of $\MT^{\A}$,  and}
&\text{on objects:} & & \E  \colon  \X \mapsto  \CX(\X,\MT),
\phantom{\text{on objects:}}&&\\
&\text{on morphisms:}  & &\E  \colon  \phi \mapsto - \circ \phi,
\phantom{\text{on morphisms:}}&&
\end{alignat*}
 where 
 $\CX(\X,\MT)$ is  seen 
as a
subalgebra of $\M^{\X}$.

 Given  $\A \in \CA$, 
we shall refer to $\D(\A)$ as the \defn{natural dual} of $\A$.   
 We have, for each~$\A \in \CA$, a  natural evaluation map 
$e_{\A}\colon \A \to \ED(\A)$, given by 
$e_{\A}(a)(x) = x(a)$ for $a \in A$ and $x \in \D(\A)$, and likewise 
there exists  an   
 evaluation map $\varepsilon_{\X }\colon \X \to \DE(\X)$ for 
$\X \in \CX$.  
We say that 
\defn{$\MT$ yields a duality on $\CA$} if  
$e_{\A}$ is an isomorphism for each $\A \in \CA$,  and that
\defn{$\MT$ yields a full duality on~$\CA$} if  in addition 
$\varepsilon_{\X}$ is an isomorphism for each $\X \in \CX$.  
Formally, if we have a full duality then $\D$ and $\E$ set up a dual equivalence between $\CA$ and $\CX$  with the unit and co-unit of the adjunction given by the evaluation maps.  
All the dualities we shall present in this paper are full and, moreover, in each case
we are able to give a precise description of 
the dual category~$\CX$.  
Better still, the dualities have the property that 
they are strong dualities. 
For the definition of a strong duality  and a full discussion of this notion
 we refer the reader
to \cite[Section~3.2]{CD98}.  Strongness
implies that 
$\D$ takes injections to surjections
and surjections to embeddings, 
facts which we shall exploit in 
Section~\ref{Sec:Applications}.

Before proceeding  we indicate, 
for the benefit of readers not 
conversant with natural duality theory, how 
 Priestley duality  
fits into this framework.
We have 
\allowdisplaybreaks
\begin{alignat*}{2}
\CA &= \CCD, \qquad
 && \text{the class of distributive lattices with $0,1$}, \\
\M &= \two,  \quad  &&\text{the two-element chain in $\CCD$}; \\
\CX &= \CP,  \quad &&\text{the category of Priestley spaces}, \\
\MT &= \twoT, \quad && \text{the discretely topologised two-element chain}; \\
R &= \{ \leq\},  \quad  && \text{where $\leq
$ is the subalgebra $\{ (0,0), (0,1), (1,1)\}$ of
$\two^2$}.    
\end{alignat*}
This duality is strong   \cite[Theorem 4.3.2]{CD98}.   We 
later exploit it as a tool when dealing with bilattices  having reducts in~$\CCD$ and
 it is 
convenient henceforth to 
denote the hom-functors
$\D$ and $\E$ setting it up by~$\fnt{H}$ and $\fnt{K}$.
When expedient, 
we
view~$\KH(\Lalg)$ as the family of clopen up-sets of~$\Lalg$, for
$\Lalg \in \CCD$.

In accordance with our black-box philosophy  we shall present 
without further preamble
the first 
of the duality theorems we shall use.  
 It addresses both  the issue of the existence 
of an alter ego  yielding  
a duality  and that of finding one which is 
conveniently simple. Theorem~\ref{genpigoneM} comes from 
specialising \cite[Theorem~7.2.1]{CD98}  and  the fullness assertion from 
\cite[Theorem~7.1.2]{CD98}. 

 We  deal  with a  
quasivariety of algebras $\CA$ generated by an algebra~$\M$ with a 
reduct in~$\CCD$ and 
denote by $\U$ the associated  forgetful
 functor from~$\CA$ into~$\CCD$. 
For $\w_1,\w_2 \in \Omega = \CCD(\U (\A),\two)$,
we let 
  $R_{\omega_1,\omega_2}$ be  the collection  of
maximal $\CA$-subalgebras
 of sublattices of the form
\[(\omega_1,\omega_2 )^{-1}(\leq) =
\{\,  (a,b) \in \M^2 \mid \omega_1(a) \leq \omega_2 (b)\,\}. 
\]

\begin{thm} \label{genpigoneM} {\rm(Piggyback Duality Theorem  for $\CCD$-based algebras, single generator  case)}
Let 
$\CA = \ISP (\M) $,
where 
$\M $ is a finite algebra with a 
reduct in $\CCD$,  
and  
 ${\Omega=\CCD(\U (\A),\two)}$.
 Let $\MT = (M  ; R, \Tp)$
 be the topological relational structure
on  the underlying set~$M$ of  $\M$ in which 
$\Tp $ is the discrete topology and 
 $R$ 
 is  the union  of  the sets   $R_{\omega_1,\omega_2}$ as 
 $\omega_1,\omega_2$ run over  $ \Omega$.
Then $\MT$ yields a natural duality on $\CA$.

Moreover,  if 
$\M $ is subdirectly irreducible,  has no proper subalgebras
and no endomorphisms other than the identity,  
then $\MT$ as defined above determines a strong  duality.  
So
the functors $\D= \CA(-, \M)$ and $\E = \CX(-, \MT)$  set up  
a dual equivalence between $\CA= \ISP(\M)$ and $\CX=\IScP(\MT)$. 
\end{thm}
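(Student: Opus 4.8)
The plan is to obtain both assertions by specialising the piggybacking machinery of \cite{CD98} to the case in which the ``base'' quasivariety is $\CCD$ equipped with its strong Priestley duality; the work lies not in the final assembly but in checking that the hypotheses of the relevant black-box theorems hold in the form stated here. For the first assertion I would apply \cite[Theorem~7.2.1]{CD98}, whose ingredients are a finitely generated quasivariety $\cat{B} = \ISP(\str{N})$ carrying a strong duality, a finitely generated quasivariety $\CA = \ISP(\M)$, and a term-reduct functor $\U \colon \CA \to \cat{B}$. Here we take $\cat{B} = \CCD = \ISP(\two)$, which by \cite[Theorem~4.3.2]{CD98} has a strong duality via the alter ego $\twoT = (\{0,1\};\leq,\Tp)$, and we exploit the feature that the type of $\twoT$ consists of the single binary relation $\leq$ and \emph{no} (partial or nullary) operations. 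The functor $\U$ is the forgetful functor induced by the $\CCD$-reduct of $\M$, and $\Omega = \CCD(\U(\M),\two)$ is the set of ``carriers''; one checks that $\Omega$ separates the points of $M$, since $\U(\M)$ is a bounded distributive lattice and any two distinct elements of it are separated by a prime filter, equivalently by some $\omega \in \Omega$. As $\twoT$ carries no operations, the only structure pulled back along a pair $(\omega_1,\omega_2) \in \Omega^2$ is the order, whose preimage is the sublattice $(\omega_1,\omega_2)^{-1}(\leq) = \{(a,b) \in M^2 \mid \omega_1(a) \leq \omega_2(b)\}$ of the $\CCD$-reduct of $\M^2$; passing to its maximal $\CA$-subalgebras produces exactly the relations $R_{\omega_1,\omega_2}$, which are algebraic by construction. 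Theorem~7.2.1 of \cite{CD98} then gives that $\MT = (M;R,\Tp)$, with $R = \bigcup_{\omega_1,\omega_2 \in \Omega} R_{\omega_1,\omega_2}$, yields a natural duality on $\CA$. Conceptually, the relations $R_{\omega_1,\omega_2}$, acting through the carriers $\omega \in \Omega$, encode the order data of the Priestley dual of $\U(\A)$ inside $\D(\A)$, so that $\U(\A)$ --- and then, by a compactness argument, the whole algebra $\A$ --- can be reconstructed from $\D(\A)$; this is the substance of the cited proof.

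For the second assertion I would invoke \cite[Theorem~7.1.2]{CD98} to upgrade the duality to a strong one under the stated hypotheses on $\M$. The three conditions --- $\M$ subdirectly irreducible, with no proper subalgebras, and no endomorphisms other than the identity --- are exactly what guarantees that nothing further need be adjoined to $\MT$: roughly, proper subalgebras of $\M$ would force nullary or partial operations into the alter ego, and non-identity endomorphisms would force unary operations, whereas in their absence the purely relational $\MT$ already suffices for strongness. A strong duality is in particular full, so the evaluation maps $e_\A$ and $\varepsilon_\X$ are isomorphisms for all $\A \in \CA$ and all $\X \in \CX$; hence $\D = \CA(-,\M)$ and $\E = \CX(-,\MT)$ form a dual equivalence between $\CA = \ISP(\M)$ and $\CX = \IScP(\MT)$, with the evaluation maps as the unit and counit of the adjunction. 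The further properties of strong dualities noted before the theorem --- in particular that $\D$ carries embeddings to surjections and surjections to embeddings --- follow from \cite[Section~3.2]{CD98}.

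The main obstacle is therefore entirely in verifying the hypotheses of the two cited theorems rather than in the conclusion. The decisive point is the separation property of $\Omega$, which is what makes the generated relational structure rich enough to be duality-yielding. Once that is secured, the only genuinely delicate matter is the passage from ``full'' to ``strong'', i.e.\ pinning down how the structural poverty of $\M$ (subdirect irreducibility, no proper subalgebras, rigidity) removes any need for operations in the alter ego. A smaller but real pitfall throughout the construction is orientation: one must fix the direction of $\leq$ in $\twoT$ and read $(\omega_1,\omega_2)^{-1}(\leq)$ consistently as $\{(a,b) \in M^2 \mid \omega_1(a) \leq \omega_2(b)\}$, since reversing either choice alters the relations $R_{\omega_1,\omega_2}$ and hence the dual category $\CX$.
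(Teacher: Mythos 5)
Your proposal is correct and follows the same route as the paper, which gives no independent proof but simply derives the duality assertion by specialising \cite[Theorem~7.2.1]{CD98} and the strongness/fullness assertion from \cite[Theorem~7.1.2]{CD98}; your added checks (that $\Omega$ separates the points of $\U(\M)$, and that the hypotheses on $\M$ obviate any operations in the alter ego) are exactly the verifications implicit in that specialisation.
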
 

We now turn to the study of algebras 
 which have reducts in $\DU$ rather than in~$\CCD$.  We 
consider a class $\CA$ of algebras for which we have a 
forgetful functor 
$\Uu $ from $\CA$ into~$\DU$.
The natural duality for~$\DU$ will 
take the place of Priestley duality for 
$\CCD$.
 This duality  is less well known to those who are not 
specialists in duality theory, 
but it is equally simple.   
We have $\DU = \ISP(\twoU)$,  where  $\twoU= (\{ 0,1\}; \land,\lor)$.
The alter ego is $\twiddle 2_{01} =(\{0,1\}; 0, 1, \leq, \Tp)$,
where $0$ and  $1$ are treated as nullary operations. 
It yields a strong duality between~$\DU$ and 
the category~$\CP_{01} = \IScP(\twiddle 2_{01} )$ of 
doubly-pointed Priestley spaces (bounded Priestley spaces in 
the terminology of~\cite[Theorem~4.3.2]{CD98}, 
where validation of the strong duality can also be found).  
 The duality is set up by  well-defined hom-functors 
$\Hu = \DU(-,\twoU) $ and $\Ku = \CP_{01} (-, \twiddle 2_{01})$.  
A member $\Lalg$ of $\DU$ is isomorphic  to  $\Ku\Hu(\Lalg)$ and  
may  be identified with the lattice of proper non-empty clopen 
up-sets of the doubly-pointed Priestley space $\Hu (\Lalg)$.

 Most previous  applications of the piggybacking theory have been  made over~$\CCD$ (see \cite[Section~7.2]{CD98}), or over the variety of unital semilattices. 
But one can 
 equally well 
piggyback 
over~$\DU$; see \cite[Theorem~2.5]{DP87} and 
\cite[Section~3.3 
and 
 Subsection~4.3.1]{CD98}.   
 (In \cite{CP2} we extend the scope further: we handle bilattices with conflation by piggybacking over $\DB$ and $\DBU$.)

\begin{thm} \label{genpigoneMu} {\rm(Piggyback Duality Theorem  for 
$\DU$-based algebras, single generator case)} 
 Suppose that 
$\CA = \ISP (\M) $, where 
$\M $ is a finite algebra with a  
reduct in
$\DU$ but no 
reduct  in~$\CCD$.  
Let  
$\Omega=\DU(\Uu(\M),\twoU)$ 
and 
$\MT = (M  ; R, \Tp)$
 be the topological relational structure
on  the underlying set~$M$ of  $\M$ in 
which~$\Tp $ is the discrete topology and 
$R$ contains  the  
relations of 
 the following types:
\begin{newlist}
\item[{\rm (a)}]  the members of  the sets   $R_{\w_1,\w_2}$, as 
 $\w_1,\w_2$ run over  $ \Omega$, where 
  $R_{\w_1,\w_2}$ is the  set 
 of 
maximal $\CA$-subalgebras
 of sublattices of the form
\[(\w_1,\w_2 )^{-1}(\leq) =
\{\,  (a,b) \in \M^2 \mid \w_1(a) \leq \w_2 (b)\,\}; 
\]
\item[{\rm (b)}] the members of the sets $ R^i_{\w}$, as
 $\w$ runs over  $ \Omega$ and $i\in\{0,1\}$, where  $R^i_{\w}$ is the set 
 of maximal $\CA$-subalgebras
 of sublattices of the form
\[
\w^{-1}(i) =
\{\,  a\in \M \mid \w(a)=i\,\}.
\]
\end{newlist}
Then $\MT$ yields a natural duality on $\CA$.

Assume moreover  that $\M$ is subdirectly irreducible,
that $\M$ has no
non-constant 
endomorphisms 
other than the identity on $\M$ and that the only 
proper subalgebras of~$\M$ are one-element subalgebras.  
Then 
the duality above  can be 
upgraded to a strong, and hence full,  duality  by  including in the alter ego~$\MT$ all one-element subalgebras
of~$\M$,  regarded  as nullary  operations. 
If $\CX=\IScP(\MT)$, where $\MT$ is upgraded as 
indicated, 
then the functors $\unbounded{\D}= \CA(-, \M)$ and~$\unbounded{\E}= \CX(-, \MT)$ yield a dual equivalence between $\CA$ and $\CX$.  
 \end{thm}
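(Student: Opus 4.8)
The plan is to obtain the result as a direct specialisation, to the base variety $\DU$, of the general piggybacking machinery over a base quasivariety equipped with a strong duality. The relevant general theory is set out in \cite[Section~3.3]{CD98} (see also \cite[Theorem~2.5]{DP87} and \cite[Subsection~4.3.1]{CD98}). The base data we feed in are: the variety $\DU$, its generating algebra $\twoU$, and the alter ego $\twiddle 2_{01}=(\{0,1\};0,1,\leq,\Tp)$, which determines a strong duality for $\DU$ by \cite[Theorem~4.3.2]{CD98}; the forgetful functor $\Uu\colon\CA\to\DU$; and the set $\Omega=\DU(\Uu(\M),\twoU)$.

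The first step is to spell out the general piggyback recipe for an alter ego of $\M$ in this setting: each basic relation or operation of the base alter ego $\twiddle 2_{01}$ is pulled back along tuples of members of $\Omega$, and one takes maximal $\CA$-subalgebras of the resulting sublattices of powers of $\M$. Concretely, the binary relation $\leq$ of $\twiddle 2_{01}$, pulled back along pairs $(\w_1,\w_2)\in\Omega^2$, gives the sublattices $(\w_1,\w_2)^{-1}(\leq)\leq\M^2$ and hence, on taking maximal $\CA$-subalgebras, the relations of family~(a); and each nullary operation $i\in\{0,1\}$ of $\twiddle 2_{01}$, pulled back along a single $\w\in\Omega$, gives the subset $\w^{-1}(i)\leq\M$ and hence the unary relations of family~(b). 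Thus the structure $\MT$ of the statement is precisely the piggyback alter ego prescribed by the general theory, and the Piggyback Duality Theorem in its form for a strongly dualisable base variety applies to yield that $\MT$ yields a natural duality on $\CA$. (The underlying mechanism is the identification, via the strong duality for $\DU$, of $\DU(\Uu(\A),\twoU)$ with the $\DU$-dual of $\Uu(\A)$, together with the fact that every $\DU$-morphism $\Uu(\A)\to\twoU$ has the form $\w\circ\Uu(x)$ with $\w\in\Omega$ and $x\in\CA(\A,\M)$; the relations in $R$ are exactly what is needed to force a $\CX$-morphism $\alpha\colon\D(\A)\to\MT$ to be an evaluation.)

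Next, for the strong, and hence full, assertion I would invoke the strong-duality theory of \cite[Section~3.2]{CD98} together with the piggyback fullness result \cite[Theorem~7.1.2]{CD98}, in their forms appropriate to the base $\DU$: a piggyback duality becomes strong once the alter ego is enriched with enough additional algebraic structure, a sufficient choice being the graphs of the endomorphisms of $\M$ together with the (partial) homomorphisms arising from the subalgebras of $\M$. Under the hypotheses in force this enrichment collapses: the only subalgebras of $\M$ are $\M$ itself and its one-element subalgebras, the only endomorphisms of $\M$ are the identity on $M$ and the constant maps onto one-element subalgebras, and subdirect irreducibility of $\M$ supplies the remaining condition needed for strongness, so that the one-element subalgebras of $\M$, treated as nullary operations, are all that must be adjoined to $\MT$. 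With $\MT$ thus upgraded, $\MT$ determines a strong, and therefore full, duality; consequently $\Du=\CA(-,\M)$ and $\Eu=\CX(-,\MT)$ set up a dual equivalence between $\CA=\ISP(\M)$ and $\CX=\IScP(\MT)$.

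The step requiring most care --- and the point where this theorem genuinely differs from Theorem~\ref{genpigoneM} --- is the faithful translation of the base alter ego $\twiddle 2_{01}$ into relational structure on $\M$. Because $\twiddle 2_{01}$ carries the two nullary operations $0$ and $1$, whereas the Priestley alter ego $\twoT$ used in the $\CCD$-based case is purely relational, the piggyback recipe here produces the extra family~(b) of unary relations, with no counterpart in Theorem~\ref{genpigoneM} (and, correspondingly, there is no need to add nullary operations to the alter ego in the strong version of that theorem, since an algebra with a $\CCD$-reduct has no one-element subalgebras). Confirming that precisely these unary relations arise, and that the three structural hypotheses on $\M$ are exactly those under which the strongness upgrade demands nothing beyond the one-element subalgebras, is what has to be done in detail.
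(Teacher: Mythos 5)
Your proposal is correct and follows essentially the same route as the paper: the duality claim is obtained as a black-box specialisation of the general piggyback machinery over the strongly dualised base $\DU$ (the paper cites \cite[Section~2]{DP87}), and the strongness upgrade rests on the same observation you make, namely that the hypotheses on $\M$ force every non-extendable partial endomorphism to have a one-element domain, so that adjoining the one-element subalgebras as nullary operations suffices. The paper's proof is merely terser, deferring the same details to \cite[Section~3.3]{CD98}.
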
 

\begin{proof}    
Our claims regarding the duality follow from 
\cite[Section~2]{DP87}.   For a discussion of the role played by 
the nullary operations in yielding a strong
duality, we refer the reader
 to \cite[Section~3.3]{CD98}, noting that 
our assumptions on $\M$ ensure that any 
non-extendable partial endomorphisms would have to  
have one-element domains. 
Hence
it suffices to include these one-element subalgebras 
as nullary operations in order to obtain a strong duality. 
\end{proof}

We conclude this section with 
remarks on the special role of
piggyback dualities.  For  
quasivarieties to which either 
Theorem~\ref{genpigoneM} or Theorem~\ref{genpigoneMu} 
applies, we could have taken a different approach, based on the NU Strong Duality Theorem \cite[Theorems~2.3.4 and~3.3.8]{CD98},
as it applies to a quasivariety $\CA= \ISP(\M)$, where 
$\M$ is a finite algebra with a lattice reduct.
This way, the set of piggybacking subalgebras would have been replaced 
by the set of all subalgebras of $\M^2$.  But this has two disadvantages, 
one well known, the other revealed by our work in \cite[Section~2]{CPcop}.      
Firstly, the set of all subalgebras of $\M^2$ may be unwieldy, even when $\M$ 
is small.  In part to address this, a theory of entailment has been devised, 
which allows superfluous relations to be discarded from a duality; 
see \cite[Section~2.4]{CD98}.  The piggybacking method, by contrast, provides 
alter egos  which are  much closer to being optimal.  Secondly, as we 
reveal 
in Section~\ref{Sec:DisPiggyDual}, the piggyback relations play a special role
in translating natural dualities to ones based on the Priestley dual spaces of 
the algebras in $\U(\CA)$ or $\Uu(\CA)$, as appropriate.  We shall also 
see that, even when certain piggyback relations can be discarded from an alter ego without destroying the duality, these relations do  make a contribution  in the translation   process.

\section{A natural duality for 
  distributive bilattices} \label{sec:DB} 
In this section we set up  a duality for  
 the variety $\DB$ and reveal the special role played on the dual side by the knowledge order.

\begin{prop}  \label{sep-prop-bdd}
$\DB = \ISP(\four)$.
\end{prop}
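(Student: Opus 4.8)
The plan is to prove the two inclusions separately. The inclusion $\ISP(\four)\subseteq\DB$ is immediate: $\four$ was constructed to lie in $\DB$, and $\DB$, being a variety, is closed under $\ISP$. The substance is the reverse inclusion, for which it is enough to show that $\four$ \emph{separates the points} of each $\A\in\DB$, i.e.\ that for all $a\ne b$ in $A$ there is a $\DB$-homomorphism $h\colon\A\to\four$ with $h(a)\ne h(b)$; the product of all such $h$ then embeds $\A$ into a power of $\four$, whence $\A\in\ISP(\four)$.

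I would first record a coordinate picture of $\four$, identifying its universe with $\{0,1\}^2$ so that $\leq_t$ is the coordinatewise order, $(x,y)\leq_k(x',y')$ iff $x\leq x'$ and $y\geq y'$, the negation is $\neg(x,y)=(1-y,1-x)$, and $0_t,1_t,0_k,1_k$ are $00,11,01,10$. In these coordinates a map $h=(h_1,h_2)\colon A\to\{0,1\}^2$ that preserves $\neg$ is forced to satisfy $h_2=1-h_1\circ\neg$, so $h$ is determined by $h_1$; and one checks that such an $h$ respects $\lor_t,\land_t,0_t,1_t$ precisely when $h_1$ is a bounds-preserving lattice homomorphism $\A_t\to\two$ — the analogous property of $h_2$ being then automatic, using that $\neg$ is a dual endomorphism of $\A_t$. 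Preservation of $\lor_k,\land_k$ comes for free by Lemma~\ref{90deg}(i), so the one remaining condition is that $h_1$ send $1_k$ to $1$ (equivalently $0_k$ to $0$). The net effect of this bookkeeping is: the $\DB$-homomorphisms $\A\to\four$ correspond exactly to the prime filters $P$ of $\A_t$ with $1_k\in P$, via $h(x)=(\chi_P(x),\,1-\chi_P(\neg x))$.

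Given $a\ne b$, I would then invoke the Prime Filter Theorem for the bounded distributive lattice $\A_t$ to obtain a prime filter $P$ containing exactly one of $a,b$. By Lemma~\ref{90deg}(ii) the elements $0_k$ and $1_k$ form a complementary pair in $\A_t$, so $P$ contains exactly one of them. If $1_k\in P$, the previous paragraph delivers the required $h$. If instead $0_k\in P$, pass to $Q:=\neg[A\setminus P]$: since $\neg$ is an involutory dual automorphism of $\A_t$, $Q$ is again a prime filter of $\A_t$, it contains $1_k$ (because $\neg 1_k=1_k\notin P$), and it separates $\neg a$ from $\neg b$. The homomorphism built from $Q$ then separates $\neg a,\neg b$, hence $a,b$ as well. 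This proves that $\four$ separates points, completing the argument.

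I expect the knowledge-constant constraint to be the only real obstacle, and it is exactly what forces the use of Lemma~\ref{90deg} together with the prime-filter observations following Proposition~\ref{lem:cong2}, rather than a one-line appeal to distributive-lattice (or De Morgan) duality for the truth reduct: an arbitrary point-separating lattice homomorphism out of $\A_t$ need not respect $0_k$ and $1_k$, and for a fixed prime filter $P$ only one of the two assignments of $\{0,1\}$ to $\{0_k,1_k\}$ is available. The complementation of $0_k$ and $1_k$ in $\A_t$ determines which assignment occurs, and the negation symmetry of $\four$ is precisely the device that lets us repair the unfavourable case. Everything else is a routine check that the candidate map respects each basic operation of $\DB$.
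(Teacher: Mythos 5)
Your proof is correct, and in substance it is the same separation argument the paper uses: both start from a $\{0,1\}$-valued bounded-lattice homomorphism $x$ on $\A_t$ separating $a$ and $b$, and both arrive at the identical map $c\mapsto (x(c),\,1-x(\neg c))$ --- indeed your prime filter $Q=\neg[A\setminus P]$ in the unfavourable case $0_k\in P$ yields character $1-x(\neg\,\cdot\,)$, which reproduces exactly the paper's second formula $(1-x(\neg c))\,x(c)$. The only real difference is how the map is certified to be a $\DB$-homomorphism onto $\four$. The paper's official proof forms the equivalence $\theta$ identifying $p$ and $q$ when $x(p)=x(q)$ and $x(\neg p)=x(\neg q)$, invokes Proposition~\ref{lem:cong2} to upgrade this $t$-lattice congruence to a full bilattice congruence, and then uses the classification of small algebras in $\DB$ (no non-trivial member of cardinality less than four, and $\four$ the unique four-element one) to identify the quotient; you instead verify preservation of each basic operation directly, using the complementation of $0_k,1_k$ in $\A_t$ from Lemma~\ref{90deg}(ii) and the negation symmetry of $\four$ to handle the knowledge constants. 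Your route thus bypasses Proposition~\ref{lem:cong2} and the small-algebra classification at the cost of some explicit bookkeeping --- bookkeeping which the paper itself carries out anyway, as an ``instructive'' supplement, in the paragraph immediately following its proof.
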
 

\begin{proof} 
Let $\A \in \DB$.  Let $a \ne b $ in $\A$ and choose $x \in \CCD
(\A_t,\two)$ such that ${x(a) \ne x(b)}$.
  Define an equivalence 
relation~$\theta$ on $\A$ by $p \, \theta \, q$ if and only if 
${x(p) = x(q)}$ and $x(\neg p ) = x(\neg q)$.  Clearly~$\theta$ is a congruence of
$\A_t$.  By Proposition~\ref{lem:cong2} it is also a congruence of 
$\A_k$, and by its definition it preserves~$\neg$.  In addition,
$A/\theta$ is a non-trivial algebra (since $x(a) \ne x(b)$) and
of cardinality at most four.  Since the only such algebra
in $\DB$, up to isomorphism, is $\four$, the image of the associated  
$\DB$-homomorphism $h \colon \A \to \A/\theta  $ is (isomorphic to)~$\four$, and separates~$a$ and~$b$.
\end{proof}

 It is instructive also to 
present 
$h
\colon \A \to \four$, as above, 
more directly.  
 We take
\[
h(c) =\begin{cases}
x(c) (1-x(\neg c)) &\text{if }  x(0_k) = 0, \\
          (1-x(\neg c))x(c)&\text{if } x(0_k) = 1,
           \end{cases}
\]
for all $c$; 
here we are viewing the image $h(c)$ as a binary string. 
In the case that $x(0_k) = 0$,
 observe that $h(0_k) = 01 =0_k^{\four}$ (note that $\neg 0_k = 0_k)$).  Since 
$x(0_k)\wedge x(1_k)=x(0_k \land _t 1_k) = x(0_t) =0$ and
 $x(0_k)\vee x(1_k)=x(0_k \lor _t 1_k) = x(1_t) =1$,  
we have   
$x(1_k) =x(\neg 1_k) = 1$ and $h_0(1_k)=10=1_k^{\four}$. 
It is routine to check 
that $h$ preserves 
$\lor_t$, $\land_t$ and~$\neg$. 
Hence~$h$  is a $\DB$-morphism and, by construction, $h(a) \ne h(b)$.
The argument for the case that $x(0_k) = 1$ is similar.

In the following result  we 
make use of the $\CCD$-morphisms from
the $t$-lattice reduct  of~$\four$ into $\two$.
These are the maps 
 $\alpha$ and $\beta$ 
 given respectively by 
$\alpha^{-1}(1) = \{ 10, 11\}$  and ${\beta^{-1}(1) = \{ 01, 11\}}$. 
Observe that $\alpha$ and $\beta$ correspond to the maps 
that assign  
to a binary string its first and second elements, respectively.

\begin{thm} \label{DBnatdual}   {\rm  (Natural duality for 
distributive bilattices)} 
There is a dual equivalence between the category $\DB$ and the category 
$\CP$ of Priestley spaces set up by hom-functors.  Specifically,~let 
\[
\four=  \bigl( \{ 00, 11,01,  10\}; \lor_t, \land_t,  
\neg, 0_t, 1_t, 0_k, 1_k\bigr)
\]
 be the four-element bilattice in the variety 
$\DB$ of distributive bilattices
and let its alter ego be
\[
\fourDBT = \bigl(  \{ 00, 11, 01,10\};  \leq_k, \Tp\bigr).
\]
Then  
\[
\DB = \ISP(\four) \quad \text{and} \quad \CP = 
\IScP (\fourDBT)
\]
 and  the  hom-functors  
$\D = \DB(-, \fourDB)$ and 
$\E = \CP(-, \fourDBT)$ set up a dual equivalence 
between $\DB$ and $\CP$. Moreover, this duality is strong.
\end{thm}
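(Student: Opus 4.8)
The plan is to derive Theorem~\ref{DBnatdual} by applying the Piggyback Duality Theorem (Theorem~\ref{genpigoneM}) to $\M = \four$, and then checking that the abstract dual category it produces is in fact (isomorphic to) the category $\CP$ of Priestley spaces. First I would verify the hypotheses of the strong-duality part of Theorem~\ref{genpigoneM}: that $\four$ is subdirectly irreducible, has no proper subalgebras, and has no endomorphisms other than the identity. Subdirect irreducibility follows from Proposition~\ref{lem:cong2} together with the fact that the $t$-lattice reduct of $\four$ is the four-element Boolean lattice, whose congruence lattice is well understood; more directly, any congruence collapsing two distinct elements must, by the analysis of congruences, collapse everything, since the bounds $0_t,1_t,0_k,1_k$ are all distinct and interlinked. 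The absence of proper subalgebras is immediate because any subalgebra must contain all four nullary operations $0_t,1_t,0_k,1_k$, which already exhaust the universe. For endomorphisms: an endomorphism must fix each of $0_t,1_t,0_k,1_k$, hence fix all four elements. So $\fourDBT$ as given by the theorem yields a strong duality, and in particular a full one.

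The second step is to identify the piggyback relation set $R$ explicitly. We take $\Omega = \CCD(\U(\four),\two) = \{\alpha,\beta\}$, the two $\CCD$-morphisms from the $t$-reduct of $\four$ into $\two$ described just before the theorem. For each pair $(\omega_1,\omega_2)\in\Omega^2$ one computes $(\omega_1,\omega_2)^{-1}(\leq) = \{(a,b)\in\four^2 : \omega_1(a)\leq\omega_2(b)\}$ and extracts its maximal $\DB$-subalgebras. The expectation is that all of these maximal subalgebras, taken together, generate (in the sense of topological-quasivariety structure, i.e.\ up to the entailment needed for the duality) exactly the single binary relation $\leq_k$ on $\{00,11,01,10\}$ — possibly together with its converse and the diagonal, which are entailed. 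Concretely, $\leq_k$ has $01$ at the bottom and $10$ at the top, with $00$ and $11$ incomparable in between; one checks that this relation is a subalgebra of $\four^2$ (it is closed under $\lor_t,\land_t,\neg$ and contains the constants pairwise appropriately) and that it arises as a maximal subalgebra of one of the four sets $(\omega_i,\omega_j)^{-1}(\leq)$. The routine but slightly tedious part is checking that no extra relations survive; here I would invoke the theory of entailment from \cite[Section~2.4]{CD98} to discard the redundant ones, or simply observe that the strong duality of Theorem~\ref{genpigoneM} permits replacing $R$ by any entailment-equivalent set, and $\{\leq_k\}$ is such a set.

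The third step is the categorical identification $\CX = \IScP(\fourDBT) \simeq \CP$. Here the key observation is that $(\{00,11,01,10\};\leq_k)$, as an ordered set, is \emph{not} itself a Priestley-type object for $\CP$ directly, but the functor sending a structured space to an associated Priestley space — quotienting or selecting by the incomparable pair $\{00,11\}$ — is an equivalence. More carefully, one notes that $\fourDBT$ is term-equivalent, as an alter ego, to a structure whose topological quasivariety is $\CP$: the point is that $\leq_k$ on four points with two pairwise-incomparable middle elements "is" a doubling of the two-element chain, and closed substructures of powers correspond bijectively and functorially to Priestley spaces. I would make this precise by exhibiting explicit functors both ways and checking they are mutually inverse up to natural isomorphism, using that $\E = \CP(-,\fourDBT)$ and the known Priestley hom-functor $\fnt K$ are related by composition with this equivalence. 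The main obstacle I anticipate is precisely this last identification: getting the description of $\IScP(\fourDBT)$ down to literally the category of Priestley spaces (rather than merely "a category dually equivalent to $\DB$") requires care about which relations are present and how closed substructures of powers of a four-element ordered space match clopen-up-set data on a Priestley space. Everything else — the hypothesis checks and the computation of piggyback relations — is elementary, and the strongness assertion is then immediate from the strong-duality clause of Theorem~\ref{genpigoneM}.
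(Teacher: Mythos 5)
Your overall route is exactly the paper's: apply Theorem~\ref{genpigoneM} to $\M=\four$, compute the piggyback relations, identify the dual category, and verify the strongness hypotheses. Your Steps 1 and 2 match the paper's proof in substance: $\four$ has no proper subalgebras and only the identity endomorphism because every element is a named constant; simplicity (hence subdirect irreducibility) is best obtained, as the paper does, from the fact that $\DB$ contains no non-trivial algebra of cardinality less than four, so $\four$ has no proper non-trivial quotients --- note that your alternative remark about the congruence lattice of the four-element Boolean lattice is not by itself enough, since that lattice is not simple as a lattice and one must use $\neg$ to kill the two projection kernels; and the piggyback computation does produce exactly $\leq_k$ as the unique maximal subalgebra inside $(\alpha,\alpha)^{-1}(\leq)$, its converse inside $(\beta,\beta)^{-1}(\leq)$, and nothing at all inside the mixed sets $(\alpha,\beta)^{-1}(\leq)$ and $(\beta,\alpha)^{-1}(\leq)$, after which one of $\leq_k$, $\geq_k$ is discarded because a binary relation and its converse are interchangeable in an alter ego.

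The one place your plan goes astray is Step 3. You assert that $(\{00,11,01,10\};\leq_k)$ ``is not itself a Priestley-type object'' and that a quotienting/selection functor and an up-to-natural-isomorphism argument are needed. In fact $\fourDBT$ is literally a Priestley space: under $\leq_k$ the four points form the product order $\twoT^2$ (bottom $01$, top $10$, two-element antichain $\{00,11\}$ in the middle), so $\fourDBT\in\ope{IP}(\twoT)$, and conversely $\twoT$ is isomorphic to a closed substructure of $\fourDBT$ (for instance $\{01,10\}$). The two containments $\IScP(\fourDBT)\subseteq\IScP(\twoT)$ and $\IScP(\twoT)\subseteq\IScP(\fourDBT)$ then give the literal equality $\IScP(\fourDBT)=\CP$ asserted in the theorem in two lines. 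No auxiliary equivalence of categories is required, and an argument that only produces a category \emph{equivalent} to $\CP$ would not quite deliver the statement as written. Everything else in your proposal is sound and coincides with the paper's argument.
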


\begin{proof}  The  proof  involves three 
steps.  

\noindent{\bf Step 1:  setting up the piggyback duality.}\newline
We must 
identify the subalgebras of 
 $\four^2$ involved in the piggyback duality supplied by
Theorem~\ref{genpigoneM} when  $\CA = \DB$ and $\M = \four$.  
Define $\alpha$ and $\beta$ 
as above. 
 
We claim that 
the knowledge order 
 $\leq_k$ is the unique maximal $\DB$-subalge\-bra
of $(\alpha,\alpha)^{-1}(\leq)$. 
We first observe that 
 it is immediate from order properties of lattices
that $\leq_k$ is a sublattice for the 
$k$-lattice structure.  
It also contains
the elements 
$01 \, 01$ and $10\, 10$. 
 By the $90^\circ$ Lemma (with~$k$ and $t$ switched), $\leq_k$ is also closed under 
$\land_t$ and $\lor_t$ (or this can be easily checked directly).
Since~$\neg$ preserves $\leq_k$, we conclude that $\leq_k$ is  a subalgebra of $\four^2$.

Now note  that, for $a = a_1a_2$ and $b = b_1b_2$ binary strings in $\four$, we have
$\alpha(a) \leq \alpha(b)$ if and only if $a_1\leq b_1$ and that 
$\alpha (\neg a) \leq \alpha(\neg b)$ if and only if $1-a_2 \leq 1-b_2$
that is, if and only if $b_2 \leq a_2$.  
It follows that if $(a,b)$ belongs to 
a $\DB$-subalgebra of $(\alpha,\alpha)^{-1}(\leq)$ then
$(a,b)$ belongs to 
the relation $\leq_k$. 
 Since we have already proved that $\leq_k$ is a $\DB$-subalgebra of
 $(\alpha,\alpha)^{-1}(\leq)$ we deduce that $\leq_k$ is the unique maximal  subalgebra contained in this sublattice.  Likewise,
the unique maximal $\DB$-subalgebra of $(\beta,\beta)^{-1}(\leq)$ 
is $\geq_k$. 

We claim that no subalgebra of $\four^2$ is  contained in
$(\alpha,\beta)^{-1}(\leq)$. 
 To see this 
we observe that  $\alpha(0_k) = \alpha(10)= 1 \nleqslant
0 = \beta(10) = \beta(0_k)$.  Likewise, consideration of $1_k$
shows that there is no $\DB$-subalgebra contained in 
$(\beta,\alpha)^{-1}(\leq)$.

Following the Piggyback Duality Theorem slavishly, we should include
both 
$\leq_k$ and~$\geq_k$ in our alter ego.  But 
it is never necessary to include  a binary relation and also its converse in an alter ego, so  $\leq_k$ suffices.  

\noindent{\bf Step 2: describing the dual category.}\newline
To prove that $\IScP(\fourDBT)$ is the category of Priestley spaces 
it suffices to note  that $\twoT \in \ope{I}\Su_c (\fourDBT)$ and that 
$\fourDBT \in \ope{IP}(\twoT)$.
It follows that
$\IScP (\twoT) \subseteq \IScP(\fourDBT)$ and 
$\IScP(\fourDBT) \subseteq \IScP(\twoT)$.    

\noindent{\bf Step 3: confirming the duality is strong.}\newline 
We verify  that 
the sufficient conditions given in Theorem~\ref{genpigoneM} for the duality to be 
strong
are satisfied by $\M = \four$. 
We proved in Section~\ref{sec:DB} that there is no non-trivial algebra in 
$\DB$ of cardinality less than four.  Hence $\four$ has no non-trivial quotients and no 
proper subalgebras.  This  implies, too, that
$\four$ is subdirectly irreducible.
Since every element of $\four$ is the interpretation of a nullary operation, 
the only endomorphism of~$\four$ is the identity.
\end{proof}

 We might wonder whether there are alternative choices for the structure of the alter ego $\fourDBT$ of $\four$.  We now demonstrate  that, 
within the realm of binary algebraic relations at least,  there is 
no alternative:
it is inevitable that the alter ego contains the relation~$\leq_k$  (or its converse).

\begin{prop} \label{DBsub}
The subalgebras of $\four^2$ 
are
$ \four^2$,  $ \Delta_{\four^2}$, $\leq_k $ and  $\geq_k$.
Here $\Delta_{\four^2} $ denotes the  diagonal subalgebra $\{ \, (a,a) \mid a \in \four\,\}$. 
\end{prop}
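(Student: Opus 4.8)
The plan is to enumerate all subalgebras of $\four^2$ by brute force but with a structural shortcut: since $\four$ has no proper subalgebras and every element is a nullary term (it is the interpretation of one of $0_t,1_t,0_k,1_k$), any subalgebra $B\le\four^2$ must contain all four diagonal pairs $00\,00$, $11\,11$, $01\,01$, $10\,10$, hence contains $\Delta_{\four^2}$, and its two projections onto the coordinates are each all of $\four$. So $B$ is determined by which off-diagonal pairs it contains, and $B$ is closed under the coordinatewise operations $\lor_t,\land_t,\neg$ (and the nullary constants, which are already present). The first step is therefore to recall, from the paragraph after Proposition~\ref{lem:cong2} applied coordinatewise, or simply from Proposition~\ref{sep-prop-bdd}, that the subalgebras of $\four^2$ are exactly the images of $\DB$-congruences on $\four^2$ together with... actually the cleaner route: a subalgebra $B$ of $\four^2$ containing $\Delta_{\four^2}$ and surjecting onto each factor, together with closure under $\neg$ and the $t$-lattice operations, is by Proposition~\ref{lem:cong2} also closed under the $k$-lattice operations, so it is a $\DBU$- (indeed $\DPBU$-) subalgebra as well. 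Thus $B$ is simultaneously a subuniverse for the $t$-lattice $\four_t$ and a subuniverse for the $k$-lattice $\four_k$, both acting coordinatewise on a four-element lattice.

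Next I would invoke the classical description of the $0,1$-sublattices of $\twoU^2$ (the square of the two-element lattice): there are exactly four, namely $\twoU^2$ itself, the diagonal, and the two "order" sublattices $\{(0,0),(0,1),(1,1)\}$ and $\{(0,0),(1,0),(1,1)\}$. Since $\four_t\cong\twoU^2$ via $\alpha,\beta$ (the pair of coordinate-reading maps, as noted just before Theorem~\ref{DBnatdual}), a $0_t,1_t$-sublattice of $(\four^2)_t\cong(\twoU^2)^2=\twoU^4$ is built from these pieces. The constraint that $B$ also contain the $k$-constants $0_k\,0_k=01\,01$ and $1_k\,1_k=10\,10$, and be closed under the coordinatewise $k$-operations, then cuts down the list drastically. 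Carrying this out concretely: writing a pair in $\four^2$ as $a_1a_2\,b_1b_2$ with $a_i,b_i\in\{0,1\}$, membership in $B$ is governed by the $\alpha$- and $\beta$-projections; one checks that the only closed possibilities are the full square, the diagonal, the set $\{a_1a_2\,b_1b_2 : a_1\le b_1,\ b_2\le a_2\}=\leq_k$, and its converse $\geq_k$. That $\leq_k$ and $\geq_k$ genuinely are subalgebras is already established in Step~1 of the proof of Theorem~\ref{DBnatdual} (via the $90^\circ$ Lemma for closure under $\land_t,\lor_t$ and the fact that $\neg$ preserves $\leq_k$).

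The main obstacle — really the only place any work is needed — is ruling out "mixed" subalgebras, i.e.\ showing no proper subalgebra other than $\Delta_{\four^2}$, $\leq_k$, $\geq_k$ can occur. Here I would argue as follows: a subalgebra $B$ properly containing $\Delta_{\four^2}$ contains some off-diagonal pair $(a,b)$ with $a\ne b$; applying $\neg$, the coordinate-swapping symmetry, and the $t$-lattice operations against the four diagonal constants generates further pairs. A short case analysis on which coordinate(s) of $a,b$ differ, using that $\four$ has exactly two atoms $01$ and $10$ in the $k$-order and that the $\alpha,\beta$ projections are $k$-order monotone/antitone respectively (exactly the observation in Step~1 of Theorem~\ref{DBnatdual}), forces $(a,b)$ into $\leq_k$ or $\geq_k$; and if $B$ contains pairs from both $\leq_k\setminus\Delta$ and $\geq_k\setminus\Delta$, closing under the lattice operations produces all of $\four^2$. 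This is a finite check over the $16$ off-diagonal pairs, organised by the $\alpha,\beta$ images, and is the one computation I would actually spell out; everything else is bookkeeping built on results already in hand.
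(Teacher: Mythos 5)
Your proposal is correct and takes essentially the same route as the paper's own proof, which is likewise only an outline of a routine finite check: every subalgebra contains $\Delta_{\four^2}$ because all elements of $\four$ are constants, and one then eliminates the off-diagonal pairs case by case using closure under $\neg$ and the lattice operations against the diagonal. Two harmless slips: closure of a subuniverse under $\lor_k,\land_k$ follows from their term-definability via the $90^\circ$ Lemma~\ref{90deg} (indeed they are not even in the type of $\DB$), not from Proposition~\ref{lem:cong2}, which concerns congruences; and there are $12$, not $16$, off-diagonal pairs to check.
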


\begin{proof} 
We merely outline the proof, which is  routine, but  tedious. 
Assume we have a  proper subalgebra $r$ of $\four^2$, necessarily containing $\Delta_{\four}$ (since all the elements of $\four$ are constants in the language of $\DB$) and assume that $r 
$ is not~$\leq_k$.  We must then check that~$r$ has to be
$\geq_k$.  
The proof relies on  
 two facts: (i)  
an element  belongs to
$r$ if and only if its negation does  and (ii) 
 if
$a = b \star c$, where $\star \in  \{ \lor_t,\land_t,\lor_k,\land_k\}$ and $c \in  r$, then 
$a \notin  r$ implies $b \notin r$.
\end{proof}

  The proposition  allows us, if we prefer, to arrive 
at Theorem~\ref{DBnatdual}  without recourse to the piggyback method.  
As  noted at the end of Section~\ref{piggyonesorted},
 it is  possible 
to obtain a duality  for  a finitely generated lattice-based quasivariety 
$\CA=\ISP(\M)$ by including in the alter ego  all subalgebras of~$\M^2$.
 Applying this to 
$\DB=\ISP(\four)$,
we obtain a duality
 by equipping the alter ego with the four 
relations listed in Proposition~\ref{DBsub}.  The subalgebras
$\four^2$ and $\Delta_{\four^2}$  
qualify as `trivial relations'  
and  can be discarded   
and we need only one of 
$\leq_k$ and $\geq_k$; see \cite[Subsection~2.4.3]{CD98}.  
Therefore the piggyback duality we presented earlier is essentially the only natural duality based on binary algebraic relations.  (To have included
relations of higher arity instead would have been possible, but
would have produced a duality which is essentially the same, 
but artificially complicated.)
We remark 
that the situation for $\DB$ is atypical, thanks to the very rich algebraic structure of $\four$.

\section{A natural duality for unbounded distributive bilattices}
\label{sec:DBU} 
  We now focus 
 on the variety $\DBU$, to which we shall apply   
Theorem~\ref{genpigoneMu}.
 We first need to 
represent $\DBU$ as a finitely generated quasivariety.

\begin{prop} \label{sep-prop-unbdd}
$\DBU = \ISP(\fourU)$.
\end{prop}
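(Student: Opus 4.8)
The plan is to mimic the proof of Proposition~\ref{sep-prop-bdd}, making the adjustments forced by the absence of bounds in the type of $\DBU$. The key point is that $\ISP(\fourU) \subseteq \DBU$ is trivial (since $\fourU \in \DBU$ and $\DBU$ is a variety, hence closed under $\ope{I}$, $\Su$ and $\ope{P}$), so the real work is the reverse inclusion, which amounts to the separation property: for $\A \in \DBU$ and $a \ne b$ in $\A$, there is a $\DBU$-homomorphism $h \colon \A \to \fourU$ with $h(a) \ne h(b)$.

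First I would, given $a \ne b$ in $\A \in \DBU$, pass to the $t$-lattice reduct $\A_t \in \DU$ and use the duality (or just the prime filter theorem for distributive lattices) to pick $x \in \DU(\A_t, \twoU)$ with $x(a) \ne x(b)$. Then I would define $\theta$ on $A$ by $p \,\theta\, q$ iff $x(p) = x(q)$ and $x(\neg p) = x(\neg q)$, exactly as in Proposition~\ref{sep-prop-bdd}. This $\theta$ is an equivalence relation, it is a congruence of $\A_t$ (being the meet of the kernels of the two lattice homomorphisms $x$ and $x \circ \neg$), by Proposition~\ref{lem:cong2} it is therefore a congruence of $\A_k$ as well, and it plainly respects $\neg$; hence $\theta \in \Con(\A)$. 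The quotient $\A/\theta$ embeds (via $p \mapsto (x(p), x(\neg p))$) into $\twoU \times \twoU$, so it is a distributive pre-bilattice with at most four elements, and it is non-trivial since $x(a) \ne x(b)$.

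The step that needs a little care — and which I expect to be the main obstacle, such as it is — is concluding that $\A/\theta$ is actually (isomorphic to) $\fourU$, so that the quotient map composed with that isomorphism gives the desired homomorphism into $\fourU$. In the bounded case Proposition~\ref{sep-prop-bdd} invoked the fact, established in Section~\ref{sec:DBilat}, that $\four$ is the only nontrivial algebra in $\DB$ of cardinality at most four. The analogous fact for $\DBU$ was also proved in Section~\ref{sec:DBilat}: there is no nontrivial distributive bilattice of cardinality less than four, and $\fourU$ is the only four-element algebra in $\DBU$. So $\A/\theta$, being a nontrivial algebra in $\DBU$ of cardinality at most four, must have cardinality exactly four and be isomorphic to $\fourU$. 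One small wrinkle relative to the bounded case: here I must first check that $\A/\theta$ genuinely carries the full $\DBU$-structure, i.e. that the negation on the quotient is an involutory dual $t$-endomorphism and $k$-endomorphism — but this is immediate since $\theta$ respects $\neg$ and the quotient of $\A \in \DBU$ by a congruence lies in the variety $\DBU$.

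Finally, composing the canonical surjection $\A \to \A/\theta$ with an isomorphism $\A/\theta \cong \fourU$ yields $h \colon \A \to \fourU$ in $\DBU$ with $h(a) \ne h(b)$, establishing $\DBU \subseteq \ISP(\fourU)$ and hence equality. As in the bounded case, one could alternatively exhibit $h$ explicitly by the string formula $h(c) = x(c)\,x(\neg c)$ (the unbounded analogue of the formula displayed after Proposition~\ref{sep-prop-bdd}, with no need to distinguish cases on $x(0_k)$ since there are no bounds to track), and check directly that it preserves $\lor_t$, $\land_t$, $\lor_k$, $\land_k$ and $\neg$; but the congruence argument above is cleaner and reuses Proposition~\ref{lem:cong2} in exactly the way the surrounding text advertises.
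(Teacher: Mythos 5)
Your main argument is correct and is essentially the paper's own proof: the paper likewise obtains $x\in\DU(\A_t,\twoU)$ from the Prime Ideal Theorem and then argues exactly as in Proposition~\ref{sep-prop-bdd}, invoking the fact (established in Section~\ref{sec:DBilat}) that $\fourU$ is, up to isomorphism, the only non-trivial algebra in $\DBU$ of cardinality at most four. One caveat about your closing aside: the explicit formula $h(c)=x(c)\,x(\neg c)$ is not the right unbounded analogue --- it fails to preserve negation already on $\fourU$ itself with $x=\alpha$, since it sends $11\mapsto 10$ and $00\mapsto 01$, whereas $\neg 10=10$. The correct analogue keeps the complementation, $h(c)=x(c)(1-x(\neg c))$ or its mirror image, and a case split is still needed, now keyed on whether $x^{-1}(1)$ is a prime filter or a prime ideal of $\A_k$ (cf.\ the remarks following Proposition~\ref{lem:cong2}) rather than on the value of $x$ at $0_k$. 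Since you explicitly rest the proof on the congruence argument, this slip does not affect the validity of the proposal.
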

\begin{proof}  We take $\A \in \DBU$ and $a \ne b $ in $\A$ and use 
the Prime Ideal Theorem for unbounded distributive lattices to
find $x \in \DU(\A_t,\twoU)$ with    
$x(a) \ne x(b)$.
We may then argue exactly as we did in the proof of Proposition~\ref{sep-prop-bdd}, but now using the fact that $\fourU$ is, up to isomorphism, 
the only non-trivial algebra in $\DBU$ of cardinality at most four.
\end{proof}

We are                                                                                                                                                                                                 
ready to embark on  setting  up a piggyback
duality for $\DBU$.  
We find the piggybacking relations 
by  drawing on 
the description of  $\Su (\four^2)$ given in Proposition~\ref{DBsub} 
to describe  
$\Su(\fourU^2)$.  As a byproduct, we shall see that
among dualities 
whose 
alter egos contain relations which are at most binary, 
the knowledge order plays a distinguished role, just as it does 
in the duality for~$\DB$.  

Below, 
to simplify the notation,  
 the elements of~$\four^2$
are  
 written as 
 pairs of 
binary strings. 
For example, 
 $01\, 11$ is our shorthand for 
$(01,11)$.

\begin{prop}  \label{fourUsubs}
The subalgebras of $\fourU^2$ 
are of two types:  
\begin{newlist}
\item[{\rm (a)}]
the subalgebras of $\four^2$, as identified in 
Proposition~{\upshape\ref{DBsub}};
\item[{\rm (b)}] decomposable subalgebras, in which each factor 
 is  
$\{01\}$, $\{10\}$ or $\fourU$.
\end{newlist}
\end{prop}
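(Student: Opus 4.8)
The plan is to reduce the classification of subalgebras of $\fourU^2$ to the known classification of subalgebras of $\four^2$ (Proposition~\ref{DBsub}) by exploiting the fact that $\fourU$ is the bounds-free reduct of $\four$, so that the only new phenomenon is the appearance of proper subalgebras that ``live on'' the distinguished elements $01$ and $10$. Concretely, I would first record that the proper subalgebras of $\fourU$ itself are exactly $\{01\}$ and $\{10\}$: since the $90^\circ$ Lemma forces $0_k\land_t 1_k$ and $0_k\lor_t 1_k$ to be $t$-bounds, any subalgebra of $\fourU$ that contains at least one element of $\{00,11\}$ must contain both $01$ and $10$ (apply $\neg$ and the $k$-operations) and hence all of $\fourU$; and $\{01\}$, $\{10\}$ are the one-element subalgebras (each is $k$-idempotent, $t$-idempotent and $\neg$-fixed). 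This also pins down the ``decomposable'' type~(b): a subalgebra of $\fourU^2$ whose image under the first projection and under the second projection are both proper subalgebras of $\fourU$ is forced to be a singleton or a product of one of $\{01\},\{10\},\fourU$ with another, giving precisely the list in~(b).

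Next I would treat a subalgebra $r\le\fourU^2$ for which at least one projection, say the first, is onto $\fourU$. The key observation is that $r$ then contains the pairs $(00,s_{00})$ and $(11,s_{11})$ for some $s_{00},s_{11}\in\fourU$, and applying $\land_t$ and $\lor_t$ to these two pairs together with the $k$-operations and $\neg$ shows that $r$ contains a pair whose first coordinate is $00$ and whose second coordinate lies in $\{00,11\}$ — hence, by the previous paragraph applied to the second projection, the second projection of $r$ is also onto $\fourU$. So whenever one projection is surjective, both are, and we may assume $r$ is a subdirect subalgebra of $\fourU^2$. At this point the identity $0_k=0_k\land_t 1_k$ no longer holds (there are no constants), so $r$ need not contain $01\,01$ or $10\,10$; this is exactly the extra flexibility that must be harnessed, and I expect it to be the main obstacle.

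To finish the surjective-projection case I would argue that $r$, being a subdirect subalgebra of $\fourU^2$, is in particular a sublattice of $\four^2$ for the $t$-structure and closed under $\neg$, $\land_k$ and $\lor_k$; and the extra elements $01\,01$ and $10\,10$ can be \emph{generated}: from any pair $(a,b)\in r$ with $a\in\{01,10\}$ one recovers $01\,01$ or $10\,10$ using $\land_k$ or $\lor_k$ against $\neg(a,b)$, and a subdirect subalgebra certainly contains such a pair (its first coordinate takes all four values). Hence $r$ contains $\Delta_{\four^2}\cap(\{01,10\}\times\fourU)$, and in fact contains $\Delta_{\four^2}$ (apply $\land_t,\lor_t$ to $01\,01$ and $10\,10$). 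So $r$ is a subdirect subalgebra of $\fourU^2$ containing the constants $00,11,01,10$ on the diagonal; i.e.\ $r$ is a subalgebra of $\four^2$ in the \emph{bounded} language, and Proposition~\ref{DBsub} applies verbatim, giving $r\in\{\four^2,\Delta_{\four^2},\leq_k,\geq_k\}$ — that is, type~(a).

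The only remaining case is that neither projection of $r$ is onto $\fourU$, which by the dichotomy established in the second paragraph is exactly the decomposable case, already handled in type~(b). Combining the three cases gives the stated classification. The verification that the decomposable subalgebras listed are genuinely subalgebras, and that the images in $\four^2$ listed in Proposition~\ref{DBsub} remain closed under $\land_k,\lor_k$ (which follows from the $90^\circ$ Lemma, as used in the proof of Theorem~\ref{DBnatdual}), is routine and I would leave it to the reader, as the authors do for Proposition~\ref{DBsub}.
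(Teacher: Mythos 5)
There is a genuine gap in your second paragraph. You claim that if one projection of a subalgebra $r\leq\fourU^2$ is onto $\fourU$ then so is the other, arguing that from $(00,s_{00})$ and $(11,s_{11})$ in $r$ one can manufacture a pair whose second coordinate lies in $\{00,11\}$. This is false: $\fourU\times\{01\}$ is a subalgebra of $\fourU^2$ (a product of two subalgebras, since $\{01\}$ is fixed by $\neg$ and idempotent for all four lattice operations, and $\DBU$ has no nullary operations), its first projection is onto, yet every element of it has second coordinate $01$, so no sequence of operations ever produces a second coordinate in $\{00,11\}$. Your case analysis therefore omits --- indeed implicitly denies the existence of --- the four subalgebras $\fourU\times\{01\}$, $\fourU\times\{10\}$, $\{01\}\times\fourU$ and $\{10\}\times\fourU$. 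These are not peripheral: they are precisely the piggybacking relations $r_{\alpha,\zerobar}$, $r_{\beta,\zerobar}$, $r_{\onebar,\alpha}$ and $r_{\onebar,\beta}$ of Proposition~\ref{DBUpigsub}, and they drive the identifications made in passing to the Priestley-style duality. The correct split, which is the one the paper uses, is by decomposability rather than by surjectivity of the projections: if some projection of $r$ is a singleton then $r$ equals the product of its projections and falls under type~(b); only when $r$ is indecomposable may one conclude that both projections are onto $\fourU$ and run the reduction to Proposition~\ref{DBsub}.

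A smaller point: in the subdirect case your recipe for generating $01\,01$ and $10\,10$ (take $(a,b)\in r$ with $a\in\{01,10\}$ and apply $\land_k$ or $\lor_k$ against its negation) is not airtight, since a priori the only available pair with first coordinate $01$ could be $01\,10$, which is fixed by $\neg$ and by both $k$-operations, so the recipe returns nothing new. The paper's route is to first secure $10\,01$ and $01\,10$ (or $01\,01$ and $10\,10$), apply $\lor_t$ and $\land_t$ to reach $11\,11$ and $00\,00$, and only then use $\land_k$ and $\lor_k$ to obtain $01\,01$ and $10\,10$. Apart from these two issues your overall strategy --- reduce the indecomposable case to Proposition~\ref{DBsub} by showing that the diagonal constants are generated --- is the same as the paper's.
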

 
\begin{proof}  
The    subalgebras  of $\fourU$ are  $\{ 01\}$,   $\{10\}$
and $\fourU$.  Any  indecomposable  subalgebra of 
$\fourU^2$ must then be such that  
the projection maps onto each coordinate have image $\fourU$.
We claim that any 
 indecomposable $\DBU$-subalgebra~$r$ of  $\fourU^2$ 
is a $\DB$-subalgebra of $\four^2$.
Suppose that $r\ne \Delta_{\fourU^2}$, the diagonal subalgebra of  
$\fourU^2$,  
and~$r$ is indecomposable.
Then~$r$ 
would contain elements
$a\, 01$ and $a'\, 10$
 for some $a, a'\in\fourU$.
 If $a=01$ and $a'=10$. 
Then  
$11\, 11$  and $00\, 00$ are in $r$  
and 
hence~$r$ is a subalgebra of $\four^2$. 
If  $a\ne 01$, then also 
$(a \land_k \neg a) \, 01 \in r$.  
Any
of the possibilities  $a = 00, 11,01$ implies  that
$10\, 01 \in r$.
  Therefore we must have 
$10\, 01 \in r$
 and likewise
$01\, 10 \in r$.    
Then, considering $\lor_t$ and $\land_t$, we get that
$11\, 11$ and $00\, 00$ are in $r$.    
But this implies 
$01\, 01\in r$,
by considering~$\land_k$.  Similarly 
$10\, 10 \in r$.  
The case $a' \ne 10$ follows by the same argument.
\end{proof}

Figure~\ref{fig:DBUsubs} shows the lattice of subalgebras of 
$\fourU^2$.  In the figure the indecomposable subalgebras are unshaded   
  and the decomposable ones  are shaded.

\begin{figure}  [ht]
\includegraphics[scale=1, trim=0  650  22 50]{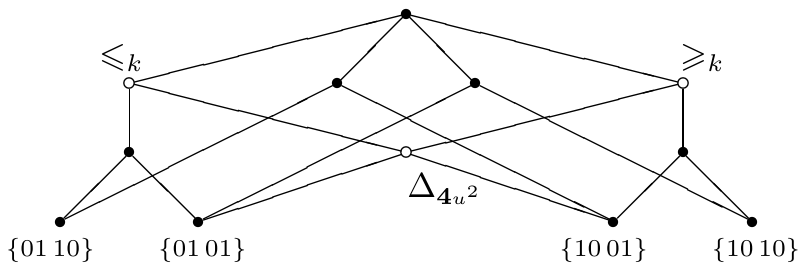}
\caption{The subalgebras of $\fourU^2$} \label{fig:DBUsubsnew}
\end{figure}

To list the piggybacking relations for  
$\DBU$ we first need to 
establish 
some notation. 
For $\w, \w_1,\w_2\in\Hu\Uu(\fourU)$ 
and $i\in\{0,1\}$, let  $R_{\w_1,\w_2}$  and $R_{\w}^i$ be
as 
defined in Theorem~\ref{genpigoneMu}. 
We write 
$r_{\w_1,\\w_2}$, respectively   $r_{\w}^i$, for the unique element of 
$R_{\w_1,\w_2}$, respectively $R_{\w}^i$,  whenever this set is a singleton, 
The set 
$\Hu\Uu(\fourU)$
 contains four elements: the maps 
$\alpha$ and $\beta$ defined earlier, and the  
constant maps onto $0$ and $1$, which we shall denote by
$\zerobar$ and~$\onebar$, respectively.
The following result is an easy consequence of
 Proposition~\ref{fourUsubs}.

\begin{prop} \label{DBUpigsub}   
Consider $\M=  \fourU$. Then 
\begin{newlist}
\item[{\rm (i)}]  for the cases in which  $R_{\w_1,\w_2}$
 is a singleton,  
\begin{newlist}
\item[{\rm (a)}] 
$r_{\alpha,\alpha} $ is $\leq_k$ and $r_{\beta,\beta} $ is $\geq_k$,
\item[{\rm (b)}] 
$r_{\w_1,\w_2} = \M^2$ whenever $\w_1 = \zerobar$ or $\w_2 =
\onebar$,
\item[{\rm (c)}] $r_{\alpha,\zerobar} = \{01\} \times \M$,
$r_{\beta,\zerobar} = \{10\} \times \M$,
$r_{\onebar,\alpha} = \M \times \{10\}$, and 
${r_{\onebar, \beta} = \M \times \{ 01\}}$;
\end{newlist} 
\item[{\rm (ii)}]  for the cases in which  $R_{\w_1,\w_2}$
 is not a singleton,  
\begin{newlist}
\item[{\rm (a)}] $R_{\alpha,\beta} = \bigl\{\{01\} \times \M, \{10\ 01\}\bigr\}$,
\item[{\rm (b)}] $R_{\beta,\alpha} = \bigl\{ \{10\}\times\M, \{10\ 01\}\bigr\}$,
\item[{\rm (c)}] $R_{\onebar,\zerobar}=\emptyset$;
\end{newlist}
\item[{\rm (iii)}] 
\begin{newlist}
\item[{\rm (a)}] 
$r_\alpha^0 = r_\beta^1 = \{ 01\}$ and     
$r_\alpha^1= r_\beta^0 =\{10\}$,
\item[{\rm (b)}] $r_{\zerobar}^0 = 
r_{\onebar}^{1} = \M$ and 
$R_{\zerobar}^1= R_{\onebar}^0 = \emptyset$.
\end{newlist} 
\end{newlist}   
\end{prop}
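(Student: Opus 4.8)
The plan is to read off everything from the complete description of $\Su(\fourU^2)$ in Proposition~\ref{fourUsubs}, since that classification already contains all the information needed. For each pair $(\w_1,\w_2)$ drawn from $\Hu\Uu(\fourU) = \{\alpha,\beta,\zerobar,\onebar\}$, I would first write down the sublattice $(\w_1,\w_2)^{-1}(\leq) = \{(a,b)\in\fourU^2 \mid \w_1(a)\leq\w_2(b)\}$ explicitly as a set of pairs of binary strings, and then, using Proposition~\ref{fourUsubs}, list which subalgebras of $\fourU^2$ are contained in it; $R_{\w_1,\w_2}$ is the set of maximal ones among these. The same recipe, one index at a time, handles part~(iii): for $\w\in\Hu\Uu(\fourU)$ and $i\in\{0,1\}$ one computes $\w^{-1}(i)\subseteq\fourU$, notes that $\Su(\fourU) = \{\{01\},\{10\},\fourU\}$, and picks the maximal subalgebras sitting inside $\w^{-1}(i)$.

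First I would dispose of the degenerate indices. For $\zerobar$, the constant-$0$ map, $(\zerobar,\w_2)^{-1}(\leq) = \fourU^2$ for every $\w_2$ (since $0\leq\w_2(b)$ always), and symmetrically $(\w_1,\onebar)^{-1}(\leq) = \fourU^2$; as $\fourU^2$ is itself a subalgebra, this gives (i)(b). For (ii)(c), $(\onebar,\zerobar)^{-1}(\leq)$ is empty, so it contains no subalgebra (the empty set is not a subalgebra here, as every element of $\fourU$ is a constant), and likewise for (iii)(b) the set $\zerobar^{-1}(1) = \onebar^{-1}(0) = \emptyset$ is not a subalgebra while $\zerobar^{-1}(0) = \onebar^{-1}(1) = \fourU$ is. The mixed cases $r_{\alpha,\zerobar}$, $r_{\beta,\zerobar}$, $r_{\onebar,\alpha}$, $r_{\onebar,\beta}$ in (i)(c): here $(\alpha,\zerobar)^{-1}(\leq) = \{(a,b) \mid \alpha(a) = 0\} = \{01\}\times\fourU$, which is already a decomposable subalgebra by Proposition~\ref{fourUsubs}(b), so it is the unique maximal one; the other three are identical computations. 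Similarly for (iii)(a), $\alpha^{-1}(0) = \{01\}$, $\alpha^{-1}(1) = \{10\}$, and symmetrically with $\beta$, each already a subalgebra.

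The genuinely substantive cases are the four with both indices in $\{\alpha,\beta\}$. For $(\alpha,\alpha)$: $(\alpha,\alpha)^{-1}(\leq) = \{(a,b) \mid \alpha(a)\leq\alpha(b)\}$; the argument already given in the proof of Theorem~\ref{DBnatdual} (Step~1) shows that any subalgebra inside it lies in $\leq_k$, and that $\leq_k$ is a subalgebra, so $R_{\alpha,\alpha} = \{\leq_k\}$; symmetrically $R_{\beta,\beta} = \{\geq_k\}$, giving (i)(a). For $(\alpha,\beta)$: $(\alpha,\beta)^{-1}(\leq) = \{(a,b)\mid \alpha(a)\leq\beta(b)\}$ consists of all pairs except those with $\alpha(a) = 1$, $\beta(b) = 0$, i.e. all pairs except $\{10,11\}\times\{00,10\}$. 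Running through the list in Proposition~\ref{fourUsubs}, the subalgebras contained in this set are $\{01\}\times\fourU$ and the singleton $\{10\ 01\}$ (and subalgebras thereof), and these two are incomparable, so $R_{\alpha,\beta} = \bigl\{\{01\}\times\fourU,\ \{10\ 01\}\bigr\}$; note in particular that $\leq_k$ is \emph{not} inside, since $10\ 11\in\leq_k$ but $\alpha(10) = 1\nleq 1 = \beta(11)$ is fine — rather one checks $11\ 11\in{\leq_k}$ yet this pair is excluded because $\beta(11)=1$, wait — the correct check is that $\leq_k\not\subseteq(\alpha,\beta)^{-1}(\leq)$ because some pair of $\leq_k$ has $\alpha$-value $1$ on the left and $\beta$-value $0$ on the right, e.g. $10\ 10\in{\leq_k}$ with $\alpha(10)=1\nleq 0=\beta(10)$. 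This yields (ii)(a), and (ii)(b) is the mirror-image computation with $\alpha,\beta$ swapped.

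I expect the only real care is in the two-element cases (ii)(a),(b): one must verify both that the claimed two subalgebras actually lie inside the sublattice and that no \emph{larger} subalgebra from the Proposition~\ref{fourUsubs} list does — in particular that none of $\leq_k$, $\geq_k$, $\{10\}\times\fourU$ slips in. That amounts to exhibiting, for each candidate larger subalgebra, a single witnessing pair violating $\w_1(a)\leq\w_2(b)$, so it is a short finite check rather than an obstacle. The rest is bookkeeping over the $4\times 4$ grid of index pairs plus the eight $(\w,i)$ pairs, all resolved directly from the two classification results already in hand.
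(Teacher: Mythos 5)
Your overall strategy is exactly the paper's own: the paper's entire ``proof'' of this proposition is the preceding sentence that it is an easy consequence of Proposition~\ref{fourUsubs}, i.e.\ precisely the finite case-check over the twelve subalgebras of $\fourU^2$ (and the three subalgebras of $\fourU$) that you describe. Most of your cases are carried out correctly, modulo a recurring harmless conflation of the sublattice $(\w_1,\w_2)^{-1}(\leq)$ with the maximal subalgebra it contains: for instance $(\alpha,\zerobar)^{-1}(\leq)$ is $\{00,01\}\times\fourU$, which is not itself a subalgebra (it is not closed under $\neg$ in the first coordinate, as $\neg\,00=11$), and likewise $\alpha^{-1}(0)=\{00,01\}$ rather than $\{01\}$; in each of these cases the maximal subalgebra sitting inside is the one you name, so the conclusions of (i)(c) and (iii)(a) survive.

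The genuine problem is in your treatment of $R_{\alpha,\beta}$ and $R_{\beta,\alpha}$. You assert that the only subalgebras of $\fourU^2$ contained in $(\alpha,\beta)^{-1}(\leq)=\fourU^2\setminus(\{10,11\}\times\{00,10\})$ are $\{01\}\times\fourU$ and the singleton $\{10\,01\}$ ``and subalgebras thereof'', and your explicit checklist of larger candidates that might slip in names only $\leq_k$, $\geq_k$ and $\{10\}\times\fourU$. You have omitted $\fourU\times\{01\}$ from consideration. Since $\beta(01)=1$, every pair $(a,01)$ satisfies $\alpha(a)\leq\beta(01)$, so $\fourU\times\{01\}$ \emph{is} contained in $(\alpha,\beta)^{-1}(\leq)$, and it strictly contains $\{10\,01\}=\{10\}\times\{01\}$; hence $\{10\,01\}$ is not maximal, and the computation actually yields $R_{\alpha,\beta}=\bigl\{\{01\}\times\M,\ \M\times\{01\}\bigr\}$. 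Symmetrically you have overlooked $\fourU\times\{10\}$ in the $(\beta,\alpha)$ case, where the computation gives $R_{\beta,\alpha}=\bigl\{\{10\}\times\M,\ \M\times\{10\}\bigr\}$. So your argument does not establish (ii)(a) and (ii)(b) as printed; completing your own case-check in fact contradicts them, which indicates that the statement itself needs amending at this point. (The discrepancy is harmless downstream, since in Theorem~\ref{DBUnatdual} all of these relations are entailed by the nullary operations $01$ and $10$ together with $\leq_k$; but as a verification of the proposition as stated your case analysis is incomplete, and once completed it gives a different answer.)
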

Below,
 when we describe the connections between the 
natural  
and
 Priestley-style dualities for $\DBU$, we shall see that the 
subalgebras listed in Proposition~\ref{DBUpigsub}
are exactly the relations we would expect to appear.   

We now present our duality theorem for $\DBU$.

\begin{thm} \label{DBUnatdual}   {\rm  (Natural duality for 
unbounded distributive bilattices)} 
There is a strong, and hence full, duality  between the category $\DBU$ and the category 
$\CP_{01}$ of  doubly-pointed Priestley spaces set up by hom-functors.  Specifically,
let 
\[
\fourU=  \bigl( \{ 00, 01, 10, 00\}; \lor_t, \land_t, \lor_k, \land_k, 
\neg\bigr)
\]
 be the four-element bilattice in the variety 
$\DBU$ of distributive bilattices without bounds
and let its alter ego be
\[
\fourDBTU = \bigl(  \{ 00, 11,  01, 10\};  
01, 10,
 \leq_k, \Tp\bigr).
\]
where the elements $01$ and $10$  are
 treated as  nullary operations.  
Then  
\[
\DBU = \ISP(\fourU) \quad \text{and} \quad \CP_{01} = 
\IScP (\fourDBTU)
\]
 and  the  hom-functors  
$\D = \DBU(-, \fourU)$ and 
$\E = \CP_{01}(-, \fourDBTU)$ set up the required dual equivalence 
between $\DBU$ and $\CP_{01}$.   
\end{thm}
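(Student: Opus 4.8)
The plan is to mirror the three-step structure used in the proof of Theorem~\ref{DBnatdual}, invoking Theorem~\ref{genpigoneMu} (the Piggyback Duality Theorem for $\DU$-based algebras) in place of Theorem~\ref{genpigoneM}, since $\fourU$ has a reduct in $\DU$ but none in $\CCD$. First I would verify the hypotheses of Theorem~\ref{genpigoneMu} for $\M = \fourU$: by Proposition~\ref{sep-prop-unbdd} we have $\DBU = \ISP(\fourU)$; from the analysis preceding Proposition~\ref{sep-prop-bdd} (and reused for $\DBU$) there is no nontrivial algebra in $\DBU$ of cardinality less than four, so $\fourU$ is subdirectly irreducible with no nontrivial quotients; its only subalgebras are the one-element subalgebras $\{01\}$ and $\{10\}$ (as recorded in the proof of Proposition~\ref{fourUsubs}); and since every endomorphism must preserve $\neg$ and the two lattice structures, the only non-constant endomorphism is the identity. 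Hence the strong-duality clause of Theorem~\ref{genpigoneMu} applies, provided we adjoin the two one-element subalgebras as nullary operations --- which is exactly what the stated alter ego $\fourDBTU$ does.

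Next I would pin down the relational part of the alter ego. Theorem~\ref{genpigoneMu} instructs us to include all members of the sets $R_{\w_1,\w_2}$ and $R_\w^i$ as $\w,\w_1,\w_2$ range over $\Omega = \Hu\Uu(\fourU) = \{\alpha,\beta,\zerobar,\onebar\}$ and $i \in \{0,1\}$. Proposition~\ref{DBUpigsub} enumerates all of these. The point to make is that, after discarding trivial relations, the relations appearing in a strong duality can be reduced: the full powers $\M^2$ and the decomposable relations of the form $\{01\}\times\M$, $\{10\}\times\M$, $\M\times\{01\}$, $\M\times\{10\}$, and $\{10\ 01\}$ are each entailed by the nullary operations $01$ and $10$ already present in $\fourDBTU$ (they are the graphs of the constant maps and products of singletons), and $\geq_k$ is the converse of $\leq_k$, which need never be included alongside it; see~\cite[Subsection~2.4.3]{CD98} and the remark after Step~1 in the proof of Theorem~\ref{DBnatdual}. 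This leaves precisely $\leq_k$, so the alter ego $\fourDBTU = (\{00,11,01,10\}; 01,10,\leq_k,\Tp)$ suffices and yields the strong duality. It remains to check that $\leq_k$ really is an algebraic relation over $\fourU$, i.e.\ a subalgebra of $\fourU^2$; this was already established in Step~1 of the proof of Theorem~\ref{DBnatdual} (it is a $k$-sublattice containing $01\,01$ and $10\,10$, is closed under $\lor_t,\land_t$ by the $90^\circ$~Lemma, and is preserved by $\neg$), and the same computation serves here.

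Finally, to identify the dual category as $\CP_{01}$, I would argue exactly as in Step~2 of Theorem~\ref{DBnatdual}: it suffices to show $\twiddle 2_{01} \in \ope{I}\Su_c(\fourDBTU)$ and $\fourDBTU \in \ope{IP}(\twiddle 2_{01})$, whence $\IScP(\twiddle 2_{01}) = \IScP(\fourDBTU) = \CP_{01}$. For the first inclusion, $\{01,10\}$ is a closed substructure of $\fourDBTU$ isomorphic to the doubly-pointed chain $\twiddle 2_{01}$ (with $01$ the bottom nullary, $10$ the top, and $\leq_k$ restricting to the chain order). For the second, the two coordinate-projection maps $\alpha,\beta \colon \fourU^2 \supseteq \fourU \to \twoU$ exhibit $\fourU$ --- and with a little care its alter ego $\fourDBTU$ --- as embedding into a power of $\twiddle 2_{01}$, so that $\fourDBTU \in \ope{IP}(\twiddle 2_{01})$. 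Putting the three steps together gives the stated dual equivalence between $\DBU$ and $\CP_{01}$ via $\D = \DBU(-,\fourU)$ and $\E = \CP_{01}(-,\fourDBTU)$, and strongness (hence fullness) follows from the strong-duality clause of Theorem~\ref{genpigoneMu}. The main obstacle I anticipate is the bookkeeping in the reduction step: one must be confident that \emph{every} non-$\leq_k$ relation thrown up by Proposition~\ref{DBUpigsub} is genuinely redundant in the presence of the nullary operations $01,10$ --- in particular that the decomposable product relations and the singleton $\{10\ 01\}$ add nothing --- and that invoking entailment does not inadvertently weaken strongness; the cited results of~\cite{CD98} on discarding trivial and entailed relations are what make this safe, but it is the step most in need of explicit justification.
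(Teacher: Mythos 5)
Your proposal is correct and follows essentially the same route as the paper: reduce the full list of piggyback relations from Proposition~\ref{DBUpigsub} by entailment (the nullaries $01$, $10$ absorb the one-element subalgebras and the decomposable relations, $\geq_k$ is discarded as the converse of $\leq_k$), and identify the dual category by the mutual inclusions between $\IScP(\fourDBTU)$ and $\CP_{01}$. The one point you flag as needing justification --- that discarding entailed relations does not destroy strongness --- is exactly what the paper settles by citing the $\MT$-Shift Strong Duality Lemma \cite[3.2.3]{CD98}, so your instinct about where the care is required is the right one.
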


\begin{proof}  Here we have included 
in the alter ego fewer relations than the  full set of piggybacking relations 
as listed in Proposition~\ref{DBUpigsub}
and we need to ensure that our restricted list suffices.  To accomplish 
this we use simple facts about entailment
as set out in \cite[Subsection~2.4.3]{CD98}.

We have included  as nullary operations 
both 
$01$ and $10$
and these entail 
the two one-element subalgebras 
$\{01\}$ and $\{10\}$ 
of    
$\fourU$.
It then follows from Theorem~\ref{genpigoneMu} and 
Proposition~\ref{DBUpigsub} that~$\fourDBTU$ yields a duality on 
$\DBU$ (see \cite[Section~2.4]{CD98}).  We now
invoke the $\MT$-Shift Strong Duality Lemma \cite[3.2.3]{CD98} to confirm that changing the alter ego by  removing entailed relations
 does not result in a duality which fails to be strong.

Finally, we note that $\fourDBTU$ is a doubly-pointed Priestley space and hence a member of $\CP_{01}$.  In the other direction,
$\twiddle 2_{01}$ is isomorphic to a closed substructure of $\fourDBTU$ and  
so belongs to 
$\IScP(\fourDBTU)$.  Hence the dual category for the natural duality is indeed the category of doubly-pointed Priestley spaces.  
\end{proof}

\section{How to dismount from a  
piggyback ride}\label{Sec:DisPiggyDual}

The piggyback method, applied to a  class $\CA=\ISP(\M)$ 
of $\CCD$-based algebras, 
supplies an alter ego $\MT$ yielding a natural duality for $\CA$,
 as described in Section~\ref{piggyonesorted}.
The relational structure of $\MT$  is constructed by 
bringing together ~$\twoT$ 
(the alter ego for Priestley duality for $\ISP(\two)$)  
and $\fnt{HU}(\M)$ (the Priestley dual space of 
the 
distributive lattice 
reduct 
 of the generating algebra  of $\CA$).
This  characteristic  of the piggyback method has 
a significant consequence:   it allows us, in a systematic way, 
to recover the Priestley dual spaces  $\fnt{HU}(\A)$ of the 
$\CCD$-reducts of the algebras  $\A\in\CA$.
The procedure for doing this played a central role in \cite{CPcop},
 where it was used to study coproducts
in quasivarieties of $\CCD$-based algebras.  
Below, in Theorem~\ref{Theo:RevEng},  we shall  strengthen  
Theorem~2.3  of \cite{CPcop}  by proving  that 
the construction given there 
is functorial and is 
naturally equivalent to $\fnt{HU}$.

Traditionally, dualities for $\CCD$-based (quasi)varieties 
have taken two  forms: natural dualities,
 almost always for classes $\CA$ which are finitely generated, 
and dualities which we dubbed ${\CCD\text{-}\CP\text{-based}}$
dualities in \cite[Section~2]{CPcop}. In the latter, at the object level,
the Priestley spaces of the $\CCD$-reducts
of members of $\CA$  are equipped with additional 
structure so that the operations of each algebra $\A$  in $\CA$ 
may be captured on $\fnt{KHU}(\A)$ (an isomorphic copy of 
$\fnt{U}(\A))$ from the structure imposed on the 
Priestley space $\fnt{HU}(\A)$.
Now assume that $\CA = \ISP(\M)$, where $\M$ is finite, 
so that a rival, natural, duality can be obtained by 
the piggyback method.
Reconciliations of the two approaches appear rather rarely in 
the literature; we can however draw attention to
\cite[Section~3]{DP87}  and the remarks in \cite[Section~7.4]{CD98}.  
There are two ways one might go in order to effect a reconciliation.
Firstly, we could use the fact that 
 an algebra $\A$ in $\CA$  determines and is determined by its natural dual $\D(\A)$
and  that $\U(\A)$ determines and is determined by $\fnt{HU}(\A)$. 
Given that, as we have indicated, we can
determine $\fnt{HU}(\A)$ from $\D(\A)$,
we could  try to capitalise on this  to  discover how to 
enrich the  Priestley  spaces  $\fnt{HU}(\A)$ to 
recapture the algebraic information lost in passage to the reducts. 
But this misses a key point about duality theory.  
The reason Priestley duality is such a useful tool is that 
it allows us concretely and in a functorial way to represent
distributive lattices 
in terms of Priestley spaces.  
Up to categorical isomorphism,  
it is immaterial how the dual spaces are actually constructed.  
An alternative strategy  now suggests itself for obtaining a duality 
for~$\CA$ based on  enriched Priestley spaces.

What we shall do in this section  is to work with a version of
 Priestley duality based on structures directly derived from 
the natural duals $\D(\A)$ of the algebras $\A$, 
rather than one based on  traditional Priestley duality applied to 
the class $\fnt{U}(\CA)$.
This shift of viewpoint  allows us to tap in to  the information
 encoded in the natural duality in a rather transparent way.  
We can  hope thereby to  arrive  at a `Priestley-style'  duality 
for $\CA=\ISP(\M)$. 
We shall demonstrate how this can be carried out in cases 
where the operations suppressed by the forgetful functor interact 
in a particularly well-behaved way with the operations which 
are retained.    
At the end of the section we also record how the strategy 
extends  to $\DU$-based algebras. 

In summary, we propose to base Priestley-style dualities on 
dual categories more closely linked to natural dualities rather than,
 as in the literature, seeking to enrich Priestley duality per se. 
The two approaches are essentially equivalent, but ours 
has several  benefits.  
By staying close to a natural duality  we are well placed to profit 
from the good categorical properties such a duality possesses. 
Moreover morphisms are treated alongside objects.
 Also, setting up a piggyback duality is  an algorithmic process 
in a way that formulating a Priestley-style duality ab initio is not. 
Although we restrict attention in this paper 
to the special types of operation present in bilattice varieties, 
and these could be handled by more traditional means, 
we note that our analysis has the potential to be adapted  
to other situations.

We now recall the construction of \cite[Section~2]{CPcop} as it 
applies 
 to the particular case of the piggyback theorem for the bounded case as stated in Theorem~\ref{genpigoneM}. 
Assume that $\M$ and  $R$ 
are as in that theorem. 
For a fixed algebra $\A \in \ISP(\M)$, we 
define 
$  Y_{\A}= \D(\A)\times\Omega,$
where $\Omega=\CCD(\U (\A),\two)$,
and equip it  
with the topology
$\Tp_{Y_{\A}}$
 having as a base of open sets
\[  \Tp_{Y_{\A}}=  \{\,U\times V\mid  U\mbox{ open in } \D(\A)\mbox{ and }V\subseteq\Omega\,\}
\]
and  with 
 the binary relation  $\preccurlyeq\, \,\subseteq Y_{\A}^2$ defined by 
\[
    (x,\w_1)\preccurlyeq(y,\w_2)\mbox{ if }(x,y)\in r^{\D(\A)} \mbox{ for some }r\in R_{\w_1,\w_2}.  
\]

In \cite[Theorem~2.3]{CPcop}, we proved that the binary relation $\preccurlyeq$ 
 is a pre-order on 
$Y_{\A}$. Moreover, if $\approx\,=\,\preccurlyeq\cap\succcurlyeq$  
				    denotes the equivalence relation on 
$Y_{\A}$ determined by $\preccurlyeq$
and 
 $\Tp_{Y_{\A}}/_{\approx}$ 
is the quotient topology, then 
$  (\, Y_{\A}/_{\approx};{\preccurlyeq}/_{\approx},\Tp_{Y_{\A}}/_{\approx}) $
  is a  Priestley space isomorphic to $\fnt{HU}(\A)$. This isomorphism is determined by the map 
$\Phi_{\A}$ given by 
$\Phi_{\A}([(x,\w)]_{\approx})=\w\circ x$.

\begin{thm}\label{Theo:RevEng}  Let $\CA = \ISP(\M)$, where 
$\M$ is a finite algebra with
a reduct in $\CCD$.
Then there exists a well-defined 
contravariant functor $\fnt{L}\colon\CA\to\CP$ given~by
\begin{alignat*}{3}
&\text{on objects:} & \hspace*{1.1cm} &
\A \longmapsto\  \fnt{L}(\A) =  (\, Y_{\A}/_{\approx};{\preccurlyeq}/_{\approx},\Tp_{Y_{\A}}/_{\approx}), \hspace*{1.1cm} \phantom{\text{on objects:}}&&\\  
&\text{on morphisms:}  &   & \, \,h 
\longmapsto\ \fnt{L}(h) \colon 
[(x,\w)]_{\approx} \mapsto  [(\D(h)(x),\w)]_{\approx}. &&
\end{alignat*} 
Moreover, $\Phi$, defined on each $\A$ by 
$\Phi_{\A} \colon [(x,\w)]_{\approx} \mapsto \w\circ x$,
determines a natural isomorphism between $\fnt{L}$  and $\fnt{HU}$.
\end{thm}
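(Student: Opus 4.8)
The plan is to verify, in turn, that $\fnt{L}$ is well defined on objects, that it is well defined on morphisms, that it is functorial (and contravariant), and finally that $\Phi$ is a natural isomorphism. Most of the object-level content is already discharged by \cite[Theorem~2.3]{CPcop}, recalled immediately above: for each fixed $\A$ the relation $\preccurlyeq$ is a pre-order on $Y_{\A}$, the quotient $(\,Y_{\A}/_{\approx};{\preccurlyeq}/_{\approx},\Tp_{Y_{\A}}/_{\approx})$ is a Priestley space, and $\Phi_{\A}$ is a Priestley-space isomorphism onto $\fnt{HU}(\A)$. So for the object part there is nothing further to prove beyond quoting that result; the real work is at the morphism level and in the naturality square.

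First I would check that $\fnt{L}(h)$ is well defined, i.e.\ independent of the choice of representative $(x,\w)$ in its $\approx$-class. For this it suffices to show that $\D(h)\times\mathrm{id}_{\Omega_{\A}}$ (where I compose with the map $\Omega_{\B}\to\Omega_{\A}$ induced by $h$, via $\w\mapsto\w\circ h$ on the reducts — one must be careful which $\Omega$ is which, since $h\colon\A\to\B$ makes $\D(h)\colon\D(\B)\to\D(\A)$ and pulls $\Omega_{\A}=\CCD(\U(\A),\two)$ back from $\Omega_{\B}$) is $\preccurlyeq$-monotone and hence descends to the quotients. Monotonicity is immediate from the definition of $\preccurlyeq$: if $(x,y)\in r^{\D(\B)}$ for some $r\in R_{\w_1,\w_2}$, then applying the continuous structure-preserving map $\D(h)$ coordinatewise keeps the pair in $r^{\D(\A)}$, since $\D(h)$ preserves each algebraic relation $r$ by construction of the functor $\D$. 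Continuity of $\fnt{L}(h)$ follows because $\D(h)$ is continuous and the topology on $Y_{\A}$ is the product topology with $\Omega$ discrete, and these properties pass to the quotient topology; that $\fnt{L}(h)$ is order-preserving with respect to ${\preccurlyeq}/_{\approx}$ is exactly the monotonicity just noted, read modulo $\approx$. Functoriality — $\fnt{L}(\mathrm{id})=\mathrm{id}$ and $\fnt{L}(h\circ g)=\fnt{L}(g)\circ\fnt{L}(h)$ with the order reversed — then reduces to the corresponding (already known) contravariant functoriality of $\D$ together with the covariant functoriality of $\A\mapsto\Omega_{\A}$ on the second coordinate, and these combine on $Y_{\A}=\D(\A)\times\Omega_{\A}$ in the obvious way.

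Finally I would establish naturality of $\Phi$. One must show that for every $\CA$-morphism $h\colon\A\to\B$ the square relating $\Phi_{\A}$, $\Phi_{\B}$, $\fnt{L}(h)$ and $\fnt{HU}(h)$ commutes. Chasing an element: a class $[(x,\w)]_{\approx}\in\fnt{L}(\B)$ is sent by $\fnt{L}(h)$ to $[(\D(h)(x),\w\circ h)]_{\approx}$, then by $\Phi_{\A}$ to $(\w\circ h)\circ\D(h)(x) = \w\circ h\circ(-\circ x)$ evaluated appropriately; on the other route, $\Phi_{\B}$ sends $[(x,\w)]_{\approx}$ to $\w\circ x$, and $\fnt{HU}(h)$ — which is precomposition with $\U(h)$ — sends $\w\circ x$ to $\w\circ x\circ\U(h)$. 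The two agree because $\D(h)(x)=x\circ h$ on the nose (that is the definition of $\D$ on morphisms, $x\mapsto x\circ h$), so $(\w\circ h)\circ(x\circ h)$ — no: rather $\w\circ(x\circ h)=\w\circ x\circ\U(h)$ once one unwinds that the composite $x\circ h$ as a map $\B\to\M$ restricted to reducts is $\U(x)\circ\U(h)$. Since each $\Phi_{\A}$ is already known to be a bijection (indeed an iso of Priestley spaces) from \cite[Theorem~2.3]{CPcop}, a natural transformation all of whose components are isomorphisms is a natural isomorphism, completing the proof. The main obstacle I anticipate is purely bookkeeping: keeping straight the two copies $\Omega_{\A}$, $\Omega_{\B}$ and the direction in which $h$ acts on each factor of $Y_{(-)}$, so that the contravariance is correctly allocated — once the variances are pinned down, every verification is a one-line diagram chase.
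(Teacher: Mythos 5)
Your overall strategy coincides with the paper's: quote \cite[Theorem~2.3]{CPcop} for the object level, check that $\D(h)$ is $\preccurlyeq$-monotone because it preserves the lifted piggyback relations (hence $\fnt{L}(h)$ is well defined on $\approx$-classes and order-preserving), get continuity from the quotient of the product topology, verify naturality by an element chase, and conclude that a natural transformation with isomorphisms as components is a natural isomorphism. All of those steps are the right ones and are exactly the ones the paper carries out.

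There is, however, one genuine error in your bookkeeping, and it is not of the harmless kind you anticipate in your closing sentence. The set $\Omega$ is $\CCD(\U(\M),\two)=\fnt{HU}(\M)$, the Priestley dual of the reduct of the fixed generating algebra $\M$; it does not vary with $\A$. (The indices $\w_1,\w_2$ of $R_{\w_1,\w_2}$ are applied to elements of $\M$ in the definition of $(\w_1,\w_2)^{-1}(\leq)\subseteq M^2$, so they must lie in $\fnt{HU}(\M)$; the occurrences of ``$\Omega=\CCD(\U(\A),\two)$'' in the surrounding text are slips for $\CCD(\U(\M),\two)$, and they appear to be what misled you.) Consequently there are no two copies $\Omega_{\A}$, $\Omega_{\B}$, there is no induced map $\w\mapsto\w\circ h$ (that composite does not even typecheck: $\w$ has domain $\U(\M)$ while $h$ has codomain $\B$), and $\fnt{L}(h)$ leaves the second coordinate untouched, sending $[(x,\w)]_{\approx}$ to $[(x\circ h,\w)]_{\approx}$ as in the statement --- not to $[(x\circ h,\w\circ h)]_{\approx}$ as in your proposal. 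The appeal to ``covariant functoriality of $\A\mapsto\Omega_{\A}$'' therefore has no content. Once this is corrected, your naturality computation collapses to the one-line chase you eventually reach anyway: $\Phi_{\A}(\fnt{L}(h)([(x,\w)]_{\approx}))=\w\circ(x\circ h)=\fnt{H}(h)(\w\circ x)=\fnt{HU}(h)(\Phi_{\B}([(x,\w)]_{\approx}))$. So the proof is repairable simply by deleting the $\Omega_{\B}\to\Omega_{\A}$ apparatus; everything else you wrote stands and matches the paper's argument.
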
 
\begin{proof}
We have already noted 
 that $\fnt{L} ( \A)\in\CP $.
We 
confirm that 
$\fnt{L}$ is a functor.  Let $h\colon\A\to \B$ and 
$(x,\w),(y,\w')\in Y_{\B}$  be
 such that $(x,\w)
\preccurlyeq  
(y,\w')$. 
Then there 
exists
 $r\in R_{\w,\w'}$  with 
$(x,y)\in r^{\D(\B)}$. Hence
${(\D(h)(x),\D(h)(y))\in r^{\D(\A)}}$
and $(\D(h)(x),\w)\preccurlyeq 
(\D(h)(y),\w')$.  
Thus 
$\fnt{L}(h)$ is
well defined and
 order-preserv\-ing.
Since $\D(h)$ is continuous and
$Y_{\A}/_{\approx}$ carries the quotient topology, and since
$\fnt{L}(h)^{-1}(U\times V)=\D(h)^{-1}(U)\times V$, 
the map  
$\fnt{L}(h)$ is also continuous.

Theorem~3.1(c) in \cite{CPcop}  proves that 
$\Phi_{\A}\colon \fnt{L}(\A)\to \fnt{HU}(\A)$ is an isomorphism of
 Priestley spaces. 
We prove that $\Phi$ is natural in $\CA$.  
Let $\A,\B\in\CA$, $x\in\D(\B)$, $h\in\CA(\A,\B)$ and $\w\in\Omega$. Then
\begin{multline*} 
\Phi_{\A}(\fnt{L}(h)([(x,\w)]_{\approx}))=\Phi_{\A}([(\fnt{D}(h)(x),\w)]_{\approx})=\Phi_{\A}([(x\circ h,\w)])\\
=\w\circ x\circ h=\fnt{H}(h)(\w\circ x)=\fnt{HU}(h)(\w\circ x) 
=\fnt{HU}(h)(\Phi_{\B}([(x,\w)]_{\approx})).
\end{multline*}
Therefore
$\Phi$ is a natural isomorphism between  the functors $\fnt{L}$ and~$\fnt{HU}$.
\end{proof}

We take  as before a $\CCD$-based quasivariety $\CA=\ISP(\M)$,
 with forgetful functor 
$\U \colon \CA \to \CCD$, 
for which we have set up a piggyback duality.
Theorem~\ref{Theo:RevEng} 
tells us how, given an algebra $\A\in\CA$,  to obtain 
from the natural 
dual $\D(\A)$
a Priestley space 
$Y_{\A}/_{\approx}$ 
serving as 
the 
dual  space of 
$\fnt{U}(\A)$.
But it  does  not yet  
tell us  how to capture on 
$Y_{\A}/_{\approx}$ 
the algebraic operations not present in the reducts.  
However it should be  borne in mind that the  
maps $\w$ in 
$\Omega=\fnt{HU}(\M)$ 
are an integral part of the natural duality construction
and it is therefore unsurprising that these maps will play a direct role in the translation
to a Priestley-style duality, if we can achieve this.  
We consider in turn operations of each  of the types 
present in the bilattice context.

Assume  first 
that 
$f$ is a unary operation occurring in the type of algebras in $\CA$   which interprets as a  $\CCD$-endomorphism on each $\A \in \CA$.
Then $\fnt{H}(f^{\A})\colon\fnt{HU}(\A)\to\fnt{HU}(\A)$ is a continuous order-preserving
map,  
given 
by $\fnt{H}(f^{\A})(x)=x\circ f^{\A}$,  for each $x\in
\fnt{HU}(\A)$. Conversely,~$f^{\A}$ can be recovered from $\fnt{H}(f^{\A})$ 
by setting $f^{\A}(a)$ for each $a\in\A$
to be  
the unique element of $\A$
for which  
 $x(f^{\A}(a))=(\fnt{H}(f^{\A})\circ x)(a)$ for each
 $x\in\fnt{HU}(\A)$.  Denote $\fnt{H}(f^{\A})$ by~$\widehat{f^{\A}}$.

Then for each $\A\in\CA$ the operation 
 $f^{\A}$ is determined by $f^{\M}$. 
Dually, $\widehat{f^{\M}}$ should encode enough information to
 enable us, with the aid of Theorem~\ref{Theo:RevEng}, to 
recover $\widehat{f^{\A}}$. 
Define a map $f_{Y_{\A}} \colon Y_{\A} \to Y_{\A}$ by 
${f_{Y_\A}(x,\w) =  (x,\w \circ f^{\M})}$, 
for  $x \in \D(\A)$ and  $\w \in \Omega$; 
here $Y_{\A}=\fnt{D}(\A)\times \Omega$, 
as in Theorem~\ref{Theo:RevEng}. 
By definition of $(Y_{\A}; \preccurlyeq,\Tp_{\A})$, 
the map $f_{Y_\A}$ is continuous.
By Theorem~\ref{Theo:RevEng}(c),
 for  every $x,x' \in \D(\A)$ and  $\w,\w' \in \Omega$, 
\begin{align*}
(x,\w)\approx (x',\w')&\Longleftrightarrow \w\circ x=\w'\circ x' \\
&\Longrightarrow \w\circ f^{\M}\circ x= \w\circ x\circ f^{\A}=\w'\circ x'\circ f^{\A}=\w'\circ f^{\M}\circ x' \\
&\Longleftrightarrow f_{Y_\A}(x,\w)\approx f_{Y_\A}(x',\w'). 
\end{align*}
Then
 the map
$\bar{f}_{\A}\colon Y_{\A}/_{\approx}\to Y_{\A}/_{\approx}$ 
determined by  
$\bar{f}_{\A}([(x,\w)]_{\approx})=[f_{Y_{\A}}(x,\w)]_{\approx}$
 is  well defined and continuous.
For each  $(x,\w) \in Y_{\A}$ and $a\in\A$ we have
\begin{multline*}
\widehat{f^{\A}}
(\Phi_{\A}([(x,\w)]_{\approx}))
(a)=\w\circ x(f^{\A}(a)) 
= \w (f^{\M}(x(a)) )  
= (\w \circ f^{\M})(x(a))  \\
=\Phi_{\A} ([(x,\w \circ f^{\M})])(a) 
 = \Phi_{\A}(\bar{f}_{\A}([(x,\w)]))(a).
\end{multline*}
We have proved that 
$\widehat{f^{\A}}\circ\Phi_{\A}=\Phi_{\A}\circ \bar{f}_{\A}$.

We now consider a unary  operation  $h$ which interprets  as a dual $\CCD$-endo\-morph\-ism on each~$\U(\A)$.  
As above,  
 $\fnt{H}(h^{\A})\colon\fnt{HU}(\A)\to\fnt{HU}(\A^{\partial})$ is a continuous order-preserving map.
Using the fact that the assignment $x\mapsto \cnst{1}-x$
defines an isomorphism between  the 
Priestley 
spaces  $\fnt{HU}(\A)^{\partial}$ and  $\fnt{HU}(\A^{\partial})$, it is possible to  define a map $\widehat{h^{\A}}\colon\fnt{HU}(\A)\to\fnt{HU}(\A)$ by $\widehat{h^{\A}}(x)
=\cnst{1}-\fnt{H}(h^{\A})(x)=\cnst{1}-(x\circ h^{\A})$. Then $\widehat{h^{\A}}$ is continuous and order-reversing.
 Conversely, $h^{\A}$ is obtained from $\widehat{h^{\A}}$ 
by setting $h^{\A}(a)$ to be the unique element of $\A$ 
that satisfies $x(h^{\A}(a))=(\cnst{1}-(\widehat{h^{\A}}(x)))(a)$ 
for each $x\in\fnt{HU}(\A)$. 
In the same way as before, we define a map 
$h_{Y_\A} \colon Y_{\A} \to Y_{\A}$ given by 
$h_{Y_\A}(x,\w) =  (x,\cnst{1}-\w \circ h^{\M})$. 
Again we have an associated continuous (now order-reversing) map 
on $(Y_{\A}; \preccurlyeq,\Tp_{\A})$ given by 
\[\bar{h}_{\A}([(x,\w)]_{\approx}) = [h_{Y_\A}(x,\w) ]_{\approx} = [(x,\cnst{1}-\w \circ h^{\M})]_{\approx}.\] 
Furthermore, 
 $\widehat{h^{\A}}\circ\Phi_{\A}=\Phi_{\A}\circ \bar{h}_{\A}$.

Nullary operations are equally simple to handle.  
Suppose the algebras in $\CA$ contain a nullary operation $\cnst c$ in the type.  
Then for each $\A\in\CA$ the 
constant~$\cnst c^{\A}$ determines a clopen up-set
 $\widehat{\cnst c^{\A}}=
\{\,x\in\fnt{HU}(\A)\mid x(\cnst c^{\A})=1\,\}$  in  
$\fnt{HU}(\A)$.
Conversely, $\cnst c^{\A}$  is the unique element $a$ of $\A$ such that $x(a)=1$ if and only if 
$x\in \widehat{ \cnst c^{\A}}$. 
Now let $\cnst c_{Y_\A}=\D(\A)\times \{\,\w\in\Omega\mid \w(\cnst c^{\M})=1\,\}$. 
In the same way as above we can move down to the Priestley space level 
and define 
\[
\bar{\cnst c}_{\A} 
=\{\, [(x,\w)]_{\approx} \mid (x,\w) \in  \cnst c_{Y_\A}\,\}=\{\, [(x,\w)]_{\approx} \mid \w( \cnst c^{\M})=1\,\}.
\]
Then, for each $(x,\w) \in Y_{\A}$, we have 
\begin{multline*} 
\Phi_{\A}([(x,\w)]_{\approx})\in\widehat{\cnst c^{\A} }\Longleftrightarrow 1=(\w\circ x)
 (\cnst c^{\A})=\w(\cnst c^{\M})\\
 \Longleftrightarrow
(x,\w)\in\cnst c_{Y_\A}
 \Longleftrightarrow
[(x,\w)]_{\approx}\in \bar{\cnst c}_{\A}.
\end{multline*} 
That is, $\Phi_{\A}$ and its inverse interchange  the sets $\bar{\cnst c}^{\A}$ and $\widehat{\cnst c^{\A}}$.

We sum up in the following theorem  what we have shown on how enriched Priestley spaces may be obtained which encode the non-lattice operations of an algebra $\A$ with a reduct $\U(\A)$ in $\CCD$.
Following common practice in similar situations,  we shall 
simplify the presentation by assuming that only one operation of each kind is present.   
To 
state 
 the theorem we need
a definition.
 Let $\CY$ be the category whose objects are  the structures of the form 
$(\Y; p,q,S)$, where~$\Y$ is a Priestley space, $p$ and~$q$ are continuous self-maps on~$\Y$
which are respectively order-preserving and order-reversing, and $S$
is a distinguished clopen subset of~$\Y$. The morphisms of~$\CY$ are continuous order-preserving maps that commute with $p$ and $q$, and preserve $S$.

\begin{thm}  \label{Theo:RevEngOps}
Let $\CA =\ISP(\M)$  be a finitely generated quasivariety
for which the language is that of $\CCD$ augmented with 
two unary operation symbols, $f$ and~$h$, 
 and a nullary operation symbol~$c$
such that, for each $\A \in \CA$,
  \begin{newlist}  
\item[{\rm (i)}]  $f^{\A}$ acts as an endomorphism  of $\CCD$, and
\item[{\rm (ii)}]  $h^{\A}$ acts  as  a dual
 endomorphism  of $\CCD$.
\end{newlist}
 Then there exist well-defined contravariant  functors $\fnt{L}^+$ and $\fnt{HU}^+$ 
from $\CA$ to~$\CY$ given by
\begin{alignat*}{3}
&\text{on objects:} & \hspace*{1.72cm} & \fnt{L}^+ \colon  \A \mapsto  (\fnt{L}(\A), \bar{f}_{\A}, \bar{h}_{\A},   
\bar{\cnst c}_{\A}),   \hspace*{1.72cm} 
\phantom{\text{on objects:}} &&\\
&\text{on morphisms:}  &  &\fnt{L}^+\colon \,\, h \mapsto\fnt{L}(h);&& \\
\shortintertext{and} 
&\text{on objects:} & &\fnt{HU}^+ \colon  \A \mapsto  (\fnt{HU}(\A);   \widehat{f^{\A}}, \widehat{h^{\A}},   
\widehat{\cnst c^{\A}}),&&\\ 
&\text{on morphisms:}  & &\fnt{HU}^+\colon \,\, h \mapsto\fnt{HU}(h).&&
\end{alignat*}
 Moreover,
$\Phi$, as defined in Theorem~{\upshape\ref{Theo:RevEng}},  is a natural equivalence between the functor $\fnt{L}^+$ and the functor $\fnt{HU}^+$. 

Let 
$\CY'$ denote the full subcategory of $\CY$ whose objects are isomorphic to 
topological structures of the form
$\fnt{L}^+(\A)$ {\rm(or }equivalently~$\fnt{HU}^+(\A)${\rm)} for some $\A\in\CA$.
the categories $\CA$ and $\CY$ are dually equivalent, with  
 the equivalence determined by either $\fnt{L}^+$ or $\fnt{HU}^+$.
\end{thm}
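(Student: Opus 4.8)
The plan is to assemble Theorem~\ref{Theo:RevEngOps} from the pieces already established in the section, treating the functoriality, naturality, and duality claims in that order. First I would verify that $\fnt{L}^+$ and $\fnt{HU}^+$ are well-defined functors into $\CY$. For $\fnt{HU}^+$ this is essentially bookkeeping: Theorem~\ref{Theo:RevEng} already gives that $\fnt{HU}$ is a contravariant functor into $\CP$, and the discussion preceding the theorem shows that $\widehat{f^{\A}}$ is a continuous order-preserving self-map, $\widehat{h^{\A}}$ a continuous order-reversing self-map, and $\widehat{\cnst c^{\A}}$ a clopen subset; it remains only to check that for a morphism $g\colon\A\to\B$ the Priestley map $\fnt{HU}(g)$ commutes with the $f$- and $h$-structure maps and preserves the distinguished clopen set, which follows immediately from $\fnt{HU}(g)(x)=x\circ g$ together with the fact that $f$, $h$ and $\cnst c$ are operations in the type (so $g$ commutes with them). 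For $\fnt{L}^+$, functoriality of the underlying $\fnt{L}$ is Theorem~\ref{Theo:RevEng}; I would then check that $\fnt{L}(g)$ commutes with $\bar f_{\A}$, $\bar h_{\A}$ and respects $\bar{\cnst c}_{\A}$, using the explicit formulas $\bar f_{\A}([(x,\w)])=[(x,\w\circ f^{\M})]$, etc., and the fact that $\fnt{L}(g)$ acts as $[(x,\w)]\mapsto[(\D(g)(x),\w)]$, so the $\Omega$-coordinate is untouched and commutation is transparent.

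Next I would prove that $\Phi$ is a natural equivalence between $\fnt{L}^+$ and $\fnt{HU}^+$. The componentwise statement that $\Phi_{\A}$ is an isomorphism of Priestley spaces and is natural in $\CA$ is exactly Theorem~\ref{Theo:RevEng}; what must be added is that $\Phi_{\A}$ is a morphism in $\CY$, i.e.\ it intertwines the extra structure. But this is precisely the content of the three displayed identities established in the paragraphs before the theorem statement: $\widehat{f^{\A}}\circ\Phi_{\A}=\Phi_{\A}\circ\bar f_{\A}$, $\widehat{h^{\A}}\circ\Phi_{\A}=\Phi_{\A}\circ\bar h_{\A}$, and $\Phi_{\A}$ together with its inverse interchanges $\bar{\cnst c}_{\A}$ and $\widehat{\cnst c^{\A}}$. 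Since $\Phi_{\A}$ is already a Priestley isomorphism, these identities say $\Phi_{\A}$ and $\Phi_{\A}^{-1}$ are both $\CY$-morphisms, hence $\Phi_{\A}$ is a $\CY$-isomorphism; combined with the naturality square already verified in Theorem~\ref{Theo:RevEng} (which only involves the underlying Priestley structure and therefore still commutes), $\Phi$ is a natural isomorphism $\fnt{L}^+\cong\fnt{HU}^+$.

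Finally, for the dual equivalence between $\CA$ and $\CY'$, I would combine the natural duality $\D\dashv\E$ between $\CA$ and $\CX=\IScP(\MT)$ (available by the relevant piggyback theorem, Theorem~\ref{genpigoneM}) with the fact that $\fnt{L}^+$ factors, up to natural isomorphism, through $\D$ via the object-level construction $\X\mapsto Y/_{\approx}$ equipped with the structure maps induced by $-\circ f^{\M}$, $\cnst 1-(-\circ h^{\M})$, and $\{\,\w\mid\w(\cnst c^{\M})=1\,\}$. Concretely: one shows the assignment $\D(\A)\mapsto\fnt{L}^+(\A)$ extends to a functor $\fnt{G}\colon\CX\to\CY'$ that is a categorical equivalence — injectivity on objects up to isomorphism is built into the definition of $\CY'$, fullness and faithfulness follow because a $\CY$-morphism between $\fnt{L}^+(\A)$ and $\fnt{L}^+(\B)$ must preserve the $\Omega$-fibring and the $\approx$-classes, hence is induced by a unique $\CX$-morphism of natural duals; then $\fnt{L}^+\simeq\fnt{G}\circ\D$ exhibits $\fnt{L}^+$ as the composite of a dual equivalence with an equivalence, so it is itself a dual equivalence onto $\CY'$, and by the established natural isomorphism so is $\fnt{HU}^+$. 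The main obstacle I anticipate is precisely the verification that $\fnt{G}$ is full and faithful — i.e.\ that no $\CY$-morphism between two objects of the form $\fnt{L}^+(\A)$ can fail to lift to a morphism of natural duals; this requires checking that the preorder $\preccurlyeq$ together with the induced structure maps records enough of the relational structure $R$ of $\MT$ to pin down $\CX$-morphisms, and here one leans on the fact that the piggyback relations $R_{\w_1,\w_2}$ generating $R$ are exactly what enters the definition of $\preccurlyeq$, so that an order-preserving map respecting the $f,h,\cnst c$-structure is forced to respect all of $R$. I would also need to confirm that the last sentence's reference to ``the categories $\CA$ and $\CY$ are dually equivalent'' is to be read as $\CA$ and $\CY'$, the essential image, since $\CY$ itself is too large.
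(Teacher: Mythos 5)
Your first two steps match what the paper actually does: Theorem~\ref{Theo:RevEngOps} is stated explicitly as a summary of the computations in the preceding paragraphs, so the well-definedness of $\bar f_{\A}$, $\bar h_{\A}$, $\bar{\cnst c}_{\A}$ and the intertwining identities $\widehat{f^{\A}}\circ\Phi_{\A}=\Phi_{\A}\circ\bar f_{\A}$, etc., are already on the page, and your additional checks (that $\fnt{HU}(g)$ and $\fnt{L}(g)$ commute with the extra structure, which the paper leaves tacit) are routine and correct. The naturality of $\Phi$ as a transformation between the enriched functors follows, as you say, by combining Theorem~\ref{Theo:RevEng} with those identities.

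The one place where your route diverges, and where it has a genuine flaw, is the final dual-equivalence claim. Your functor $\fnt{G}\colon\CX\to\CY'$ and the assertion that ``a $\CY$-morphism between $\fnt{L}^+(\A)$ and $\fnt{L}^+(\B)$ must preserve the $\Omega$-fibring and the $\approx$-classes'' cannot be right as stated: an object of $\CY$ is just a Priestley space with two self-maps and a clopen set, and carries no record of the fibring $Y_{\A}=\D(\A)\times\Omega$ or of which points were identified by $\approx$, so there is nothing for a $\CY$-morphism to preserve. The obstacle you flag is real, but the correct resolution is not to analyse $\preccurlyeq$ further; it is to use the natural isomorphism $\fnt{L}^+\cong\fnt{HU}^+$ that you have already established and prove the dual equivalence for $\fnt{HU}^+$ directly. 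There it is the standard enriched-Priestley-duality argument: a $\CY$-morphism $\phi\colon\fnt{HU}^+(\B)\to\fnt{HU}^+(\A)$ is in particular a Priestley morphism, hence equals $\fnt{H}(g)$ for a unique $\CCD$-morphism $g\colon\U(\A)\to\U(\B)$, and commutation with $\widehat{f}$, $\widehat{h}$ and preservation of $\widehat{\cnst c}$ force $g$ to commute with $f$, $h$ and $\cnst c$ (using the recovery statements in the text, e.g.\ that $f^{\A}$ is determined by $\widehat{f^{\A}}$); faithfulness is inherited from $\fnt{H}$ and essential surjectivity onto $\CY'$ holds by definition. You are also right that the final sentence of the statement should read $\CY'$ rather than $\CY$.
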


We now indicate the 
 modifications that we have to 
make 
 to 
Theorem~\ref{Theo:RevEng} to handle the 
unbounded case. 
In Theorem~\ref{Theo:RevEngu},  the  sets of relations arising  are as specified   in 
Theorem~\ref{genpigoneMu}.

 Let $\CA = \ISP(\M)$, where 
$\M$ is a finite algebra having  a reduct $\Uu(\M)$  in $\DU$ and let $\Omega=\Hu\Uu(\M)$. 
For each $\A\in \CA$, let 
$
  \textstyle  Y_{\A}= \Du(\A)\times\Omega
$
with the topology $\Tp_{Y}$ having as 
a base  of open sets  
$
    \{\,U\times V\mid  U\mbox{ open in } \Du(\A)\mbox{ and }V\subseteq\Omega\,\},
$
and the binary relation $\preccurlyeq\, \subseteq Y^2$ given by
\[
    (x,\w_1)\preccurlyeq(y,\w_2)\mbox{ if }(x,y)\in r^{\Du(\A)}\mbox{ for some }r\in R_{\w_1,\w_2}.
\]

\begin{thm}\label{Theo:RevEngu}  Let $\CA = \ISP(\M)$, where 
$\M$ is a finite algebra with
 a reduct in $\DU$.
Then there exists a well-defined contravariant  functor $\unbounded{\fnt{L}}\colon\CA\to\CP_{01}$ given by
\begin{alignat*}{3}
&\text{on objects:} & \hspace*{.55cm} &
 \A \longmapsto\   \unbounded{\fnt{L}} (\A) = (\, Y_{\A}/_{\approx}; {\preccurlyeq}/_{\approx}, c_0, c_1, \Tp_{Y_{\A}}/_{\approx}),\hspace*{.55cm}
\phantom{\text{on objects:}}
 && \\
&\text{on morphisms:}  & &
   h \,\,
\longmapsto\ 
\unbounded{\fnt{L}}(h)\colon [(x,\w)]_{\approx}
\mapsto 
[(\Du(h)(x),\w)]_{\approx}.  && 
\end{alignat*}

Moreover, $\Phi$, defined on each $\A$  by $\Phi_{\A}([(x,\w)]_{\approx})=\w\circ x$, determines a natural isomorphism between $\unbounded{\fnt{L}}$  and $\Hu\Uu$.
\end{thm}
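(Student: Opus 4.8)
The plan is to mirror the proof of Theorem~\ref{Theo:RevEng}, substituting the $\DU$-based piggyback machinery of Theorem~\ref{genpigoneMu} for the $\CCD$-based version, and then to account for the extra structure carried by objects of $\CP_{01}$, namely the two distinguished points. First I would quote \cite[Section~2]{DP87} (in the $\DU$-based form), exactly as was done in the proof of Theorem~\ref{genpigoneMu}, to obtain that $\preccurlyeq$ is a pre-order on $Y_{\A}$ and that the quotient $(\,Y_{\A}/_{\approx};{\preccurlyeq}/_{\approx},\Tp_{Y_{\A}}/_{\approx})$ is a Priestley space; the map $\Phi_{\A}([(x,\w)]_{\approx})=\w\circ x$ then realises the isomorphism of this Priestley space with $\Hu\Uu(\A)$, by the $\DU$-based analogue of \cite[Theorem~3.1(c)]{CPcop}. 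The only genuinely new ingredient is the specification of the two constants $c_0,c_1$ on $Y_{\A}/_{\approx}$: following the recipe for nullary operations developed just before Theorem~\ref{Theo:RevEngOps}, and remembering that $\Omega=\Hu\Uu(\M)$ now contains the two constant maps $\zerobar$ and $\onebar$, I would set $c_i$ to be the $\approx$-class (shown to be a singleton class) picked out by the relations $R^i_{\w}$ of Theorem~\ref{genpigoneMu}(b), or equivalently by the points of $\Omega$ witnessing those relations. One then checks that $\Phi_{\A}$ sends $c_i$ to the point of $\Hu\Uu(\A)$ which is the interpretation of the $i$th constant of $\twiddle 2_{01}$, so that $\Phi_{\A}\colon\unbounded{\fnt{L}}(\A)\to\Hu\Uu(\A)$ is an isomorphism in $\CP_{01}$.

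Next I would verify functoriality of $\unbounded{\fnt{L}}$ by repeating verbatim the argument in the proof of Theorem~\ref{Theo:RevEng}: given $h\colon\A\to\B$, if $(x,\w)\preccurlyeq(y,\w')$ in $Y_{\B}$ then there is $r\in R_{\w,\w'}$ with $(x,y)\in r^{\Du(\B)}$, hence $(\Du(h)(x),\Du(h)(y))\in r^{\Du(\A)}$ since $\Du(h)$ is a $\CX$-morphism and hence preserves every $r\in R$, so $\unbounded{\fnt{L}}(h)$ is well defined and order-preserving; continuity follows from $\unbounded{\fnt{L}}(h)^{-1}(U\times V)=\Du(h)^{-1}(U)\times V$ together with the quotient topology. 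To see that $\unbounded{\fnt{L}}(h)$ respects $c_0$ and $c_1$, I would note that $\unbounded{\fnt{L}}(h)$ leaves the second coordinate $\w\in\Omega$ untouched, so it carries the class defined by $R^i_{\zerobar}$, $R^i_{\onebar}$ (equivalently by the constant maps in $\Omega$) to the corresponding class; alternatively this is immediate once we know $\Phi$ is a natural transformation, since $\Hu\Uu(h)$ preserves the distinguished points of a doubly-pointed Priestley space. Naturality of $\Phi$ itself is the same one-line display as in Theorem~\ref{Theo:RevEng}: $\Phi_{\A}(\unbounded{\fnt{L}}(h)([(x,\w)]_{\approx}))=\w\circ(x\circ h)=\Hu(h)(\w\circ x)=\Hu\Uu(h)(\Phi_{\B}([(x,\w)]_{\approx}))$, where $x\circ h=\Du(h)(x)$ by the definition of the hom-functor $\Du$.

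The main obstacle, such as it is, is bookkeeping rather than mathematics: one must be careful that in the $\DU$-based setting $R_{\w_1,\w_2}$ can genuinely fail to be a singleton and can even be empty (as Proposition~\ref{DBUpigsub}(ii)(c) shows for $R_{\onebar,\zerobar}$), so the clause defining $\preccurlyeq$ via ``$(x,y)\in r^{\Du(\A)}$ for some $r\in R_{\w_1,\w_2}$'' must be read with the convention that an empty disjunction is false; this is exactly the convention already in force in \cite{DP87}, and the pre-order and Priestley-space conclusions there are stated for the general $\DU$-based case, so nothing extra is needed. Likewise one should confirm that the two one-element subalgebras of $\M$ (which Theorem~\ref{genpigoneMu} adds to the alter ego as nullary operations in the strong case) are precisely what the constant maps in $\Omega$ detect, so that the constants $c_0,c_1$ on $\unbounded{\fnt{L}}(\A)$ match the nullary operations of $\twiddle 2_{01}$ lifted to $\Hu\Uu(\A)$; this matching is forced by the isomorphism $\Phi_{\A}$ together with the description of $\Hu\Uu(\A)$ as the space of doubly-pointed Priestley space structure on the proper non-empty clopen up-sets of $\Uu(\A)$. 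With these conventions fixed, every step is a routine transcription of the bounded proof, and the theorem follows.
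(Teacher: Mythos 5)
Your proposal is correct and follows essentially the same route as the paper: carry over the argument of Theorem~\ref{Theo:RevEng} (via \cite[Theorem~2.3]{CPcop}) for the Priestley-space part and the isomorphism $\Phi_{\A}$, and treat the constants as the only new ingredient by showing that the classes determined by the relations $R^i_{\w}$ collapse to single non-empty $\approx$-classes which $\Phi_{\A}$ sends to the distinguished points of $\Hu\Uu(\A)$. The paper makes the singleton/non-emptiness claim precise by invoking the injectivity and surjectivity of $\Phi_{\A}$, which is exactly the check your parenthetical ``shown to be a singleton class'' defers to, so the two arguments coincide.
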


\begin{proof}
The only new ingredient here as compared with the proof of Theorem~\ref{Theo:RevEng} concerns the role of the constants. 
The argument used in the proof of that theorem, as given in~\cite[Theorem~2.3]{CPcop}, can be applied directly to prove that $\Phi_{\A}\colon(\, Y_{\A}/_{\approx}; {\preccurlyeq}/_{\approx},
\Tp_{Y_{\A}}/_{\approx})\to \Hu\Uu(\A)$ defined by 
$\Phi_{\A}([(x,\w)]_{\approx})=\w\circ x$ is a well-defined homeomorphism which is also an order-isomorphism.
To confirm  that $\unbounded{\fnt{L}}$ is well defined 
we shall show  simultaneously 
that $\left(\bigcup \{\,R^{i}_{\w}\mid \w\in\Omega\,\}\right)/_{\approx}$ is a singleton and that
$\Phi_{\A}$ maps 
its
 unique element  
to the corresponding constant map 
in  $\Hu\Uu(\A)$.
Thus 
$\textstyle 
\{c_i\}=\left(\bigcup \{\,R^{i}_{\w}\mid \w\in\Omega\,\}\right)/_{\approx}
$
for
$i\in\{0,1\}$.

 Below we write~$r$ rather than 
$r^{\Du(\A)}$ for the lifting of a piggybacking relation~$r$ to $\Du(\A)$.
Let $\w_1,\w_2\in\Omega$ and $r_1\in R^{1}_{\w_1}$, $r_2\in R^{1}_{\w_2}$, $x\in r_1$, and $y\in r_2$. 
For each $a\in \A$, we have  $\w_1 (x(a))=1=\w_2 (y(a))$. Then $\Phi_{\A}([
(x,\w_1)]_{\approx})=\Phi_{\A}([(x,\w_2)]_{\approx})=\cnst 1$, where $\cnst 1\colon A\to \{0,1\}$ denotes the constant map $a\mapsto 1$ . Since~$\Phi_{\A}$ is injective, $[(x,\w_1)]_{\approx}=\nu([(x,\w_2)]_{\approx})$. 
This proves that $|\bigcup \{\, R^1_{\w}\mid \w\in\Omega\,\}/_{\approx}|\leq 1$ and that $\Phi_{\A}((\bigcup \{R^{1}_{\w}\mid \w\in\Omega\})/_{\approx})\subseteq\{\cnst 1\}$.
Similarly, we obtain $|\bigcup \{R^0_{\w}\mid \w\in\Omega\}/_{\approx}|\leq 1$ and $\Phi_{\A}(\bigcup \{R^{0}_{\w}\mid \w\in\Omega\})/_{\approx})\subseteq\{\cnst 0\}$. 
Because 
~$\Phi_{\A}$ is surjective,  
there exists $x\in \Du(\A)$ and $\w\in\Omega$ such that $\w \circ x 
=\cnst 1$. Then $x\in R^1_{\w}$, which proves that $\bigcup \{R^1_{\w}\mid \w\in\Omega\}\neq\emptyset$. The same argument applies to $\bigcup \{R^0_{\w}\mid \w\in\Omega\}$. 
\end{proof}

The arguments 
for handling additional operations in the bounded case 
carry over  to 
 piggyback dualities over~$\DU$ with only the obvious modifications.

\section{From a natural duality to the product representation} \label{sec:prodrep}

The natural dualities in Theorems~\ref{DBnatdual} and~\ref{DBUnatdual} combined with the Priestley dualities for bounded and unbounded distributive lattices, respectively, prove that $\DB$ is categorically equivalent to $\CCD$ and that $\DBU$ is categorically equivalent to $\DU$. These equivalences are
set up
 by the functors $\fnt{KD}\colon \DB\to\CCD$ and  $\fnt{EH}\colon \CCD\to\DB$, and $\Ku\Du\colon \DBU\to\DU$ and  $\Eu\Hu\colon \DU\to\DBU$:

\begin{center}
\begin{tikzpicture} 
[auto,
 text depth=0.25ex,
 move up/.style=   {transform canvas={yshift=1.9pt}},
 move down/.style= {transform canvas={yshift=-1.9pt}},
 move left/.style= {transform canvas={xshift=-2.5pt}},
 move right/.style={transform canvas={xshift=2.5pt}}] 
\matrix[row sep= 1cm, column sep= 1.3cm]  
{ 
\node (DB) {$\DB$}; & \node (P) {$\CP$};  & \node (D) {$\CCD$};&\node (DBU) {$\DBU$}; & \node (PU) {$\CP_{01}$};  & \node (DU) {$\DU$.};\\ 
};
\draw  [->, move up] (DB) to node  [yshift=-2pt] {$\D$}(P);
\draw [<-,move down] (DB) to node [swap]  {$\E$}(P);
\draw  [->,move up] (P) to node  [yshift=-2pt] {$\fnt{K}$}(D);
\draw [<-,move down] (P) to node [swap]  {$\fnt{H}$}(D);
\draw  [->, move up] (DBU) to node  [yshift=-2pt] {$\Du$}(PU);
\draw [<-,move down] (DBU) to node [swap]  {$\Eu$}(PU);
\draw  [->,move up] (PU) to node  [yshift=-2pt] {$\Ku$}(DU);
\draw [<-,move down] (PU) to node [swap]  {$\Hu$}(DU);
\end{tikzpicture}
\end{center}
With the aid of Theorem~\ref{Theo:RevEng}
we can
give  explicit 
descriptions   of  
$\fnt{EH}$ and~$\fnt{KD}$.

\begin{thm} \label{Theo:RevEngDB}  Let 
$\D\colon\DB\to\CP$ and $\E\colon\CP\to\DB $ be the functors setting up the duality presented in Theorem~{\upshape\ref{DBnatdual}}.
 Then for each $\A\in \DB$ the Priestley dual 
$\fnt H(\A_t)$ of the $t$-lattice reduct of $\A$ is 
such that  
\[
\textstyle\fnt H(\A_t)\cong\D(\A)\coprod_{\CP}\D(\A)^\partial,
\]
where $\cong$ denotes an isomorphism of Priestley spaces. 
\end{thm}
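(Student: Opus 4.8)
The plan is to read the statement off from Theorem~\ref{Theo:RevEng}, applied with $\M=\four$ and $\U\colon\DB\to\CCD$ the $t$-reduct functor, by computing the Priestley space $\fnt{L}(\A)$ explicitly. By that theorem there is, naturally in $\A$, an isomorphism of Priestley spaces $\fnt{H}(\A_t)=\fnt{HU}(\A)\cong\fnt{L}(\A)=(Y_{\A}/_{\approx};{\preccurlyeq}/_{\approx},\Tp_{Y_{\A}}/_{\approx})$ (using $\U(\A)=\A_t$), where $Y_{\A}=\D(\A)\times\Omega$ and $\Omega=\CCD(\four_t,\two)=\{\alpha,\beta\}$ is the two-element index set for the piggyback relations of $\four$; the isomorphism is $\Phi_{\A}\colon[(x,\w)]_{\approx}\mapsto\w\circ x$. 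So it suffices to make $(Y_{\A};\preccurlyeq,\Tp_{Y_{\A}})$ and $\approx$ explicit.

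First I would recall the piggyback relations for $\M=\four$, already determined in the proof of Theorem~\ref{DBnatdual} (equivalently via Proposition~\ref{DBsub}): $R_{\alpha,\alpha}=\{\leq_k\}$, $R_{\beta,\beta}=\{\geq_k\}$, and $R_{\alpha,\beta}=R_{\beta,\alpha}=\emptyset$. Unwinding the definition of $\preccurlyeq$ then gives, for $x,y\in\D(\A)$: $(x,\alpha)\preccurlyeq(y,\alpha)$ iff $x(a)\leq_k y(a)$ for all $a\in A$; $(x,\beta)\preccurlyeq(y,\beta)$ iff $x(a)\geq_k y(a)$ for all $a\in A$; and no member of the $\alpha$-layer $\D(\A)\times\{\alpha\}$ is $\preccurlyeq$-comparable with a member of the $\beta$-layer $\D(\A)\times\{\beta\}$. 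In particular $\preccurlyeq$ is already antisymmetric, so $\approx$ is the identity relation, $Y_{\A}/_{\approx}=Y_{\A}$ and ${\preccurlyeq}/_{\approx}={\preccurlyeq}$.

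Next I would note that, as $\Omega$ is finite and discrete, $\Tp_{Y_{\A}}$ is exactly the disjoint-union topology of the two clopen layers, and the first-coordinate projection restricts to a homeomorphism of each layer onto $\D(\A)$. Transported along this homeomorphism, the order of the $\alpha$-layer becomes the pointwise $\leq_k$ order of $\D(\A)$ --- i.e.\ the order of the Priestley space $\D(\A)$ --- while the order of the $\beta$-layer becomes its reverse, i.e.\ that of $\D(\A)^{\partial}$. (Via $\Phi_{\A}$ this says the prime filters of $\A_t$ fall into the disjoint clopen pieces $\{\alpha\circ x\mid x\in\D(\A)\}$ and $\{\beta\circ x\mid x\in\D(\A)\}$, which one may picture as the ``first-coordinate'' and ``second-coordinate'' prime filters.) Hence $\fnt{L}(\A)$ is, as a Priestley space, the disjoint union of $\D(\A)$ and $\D(\A)^{\partial}$. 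Finally I would invoke the standard fact that binary coproducts in $\CP$ are disjoint unions (dually, $\fnt{H}$ sends a direct product of two objects of $\CCD$ to the disjoint union of their Priestley duals); combining this with the isomorphisms above yields $\fnt{H}(\A_t)\cong\D(\A)\coprod_{\CP}\D(\A)^{\partial}$.

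The proof is light once the earlier machinery is in hand; the one point carrying the weight is the emptiness of $R_{\alpha,\beta}$ and $R_{\beta,\alpha}$, which is precisely what makes $\preccurlyeq$ a partial order (so that no passage to a quotient is needed) and what splits the dual space into two order-disjoint clopen components. Everything else --- the triviality of the finite discrete index set for the topology, and the description of binary coproducts in $\CP$ --- is routine.
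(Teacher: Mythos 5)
Your proposal is correct and follows essentially the same route as the paper's own proof: both unwind Theorem~\ref{Theo:RevEng} using the facts that $R_{\alpha,\beta}=R_{\beta,\alpha}=\emptyset$ while $r_{\alpha,\alpha}$ is $\leq_k$ and $r_{\beta,\beta}$ is $\geq_k$, observe that $\preccurlyeq$ is therefore already a partial order splitting $Y_{\A}$ into two order- and topologically-disjoint layers isomorphic to $\D(\A)$ and $\D(\A)^{\partial}$, and conclude via the identification of finite coproducts in $\CP$ with disjoint unions. No gaps.
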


\begin{proof}
Adopting  the notation of Theorems~\ref{genpigoneM} and \ref{DBnatdual}, 
we note that in the 
proof of the latter we observed that
\[
R_{\alpha,\beta} = R_{\beta,\alpha}=\emptyset, \qquad  
r_{\alpha,\alpha}\mbox{ is } \leq_{k}  \ \text{ and }\   r_{\beta,\beta}\mbox{ is } \geq_{k}
\]
(here we have written 
 $r_{\w,\w}$ for  the unique element of $R_{\w,\w}$).
 As a result, for  $\A\in\DB$, with $\D(\A)=(X;\leq,\Tp)$, we have 
\[
R^{\D(\A)}_{\alpha,\beta} = R^{\D(\A)}_{\beta,\alpha}=\emptyset,  \qquad 
r^{\D(\A)}_{\alpha,\alpha}\mbox{ is } \leq \ \text{ and } \ r^{\D(\A)}_{\beta,\beta}\mbox{ is } \, \geq.
\]
From this and the definition of $\preccurlyeq\, \subseteq Y_{\A}^2$ it follows that
\[(x,\w_1)\preccurlyeq (y,\w_2) 
\Longleftrightarrow 
\begin{cases}
 x\leq y  \mbox{ and } \w_1=\w_2=\alpha, \mbox{ or }\\
  x\geq y  \mbox{ and } \w_1=\w_2=\beta.
\end{cases}
\]
Then $Y_{\A}=(\D(\A)\times\Omega;\preccurlyeq,\Tp_{Y_{\A}})$ 
is already a poset (no quotienting is required) for each $\A\in\DB$. 
 And, order theoretically and topologically,  $Y_{\A}$ is the 
disjoint union of ordered spaces $Y_\alpha$ and~$Y_\beta$, 
where $Y_\alpha$ and  $Y_\beta$ are the subspaces of $Y_{\A}$
 determined by $\D(\A)\times\{\alpha\}$ and
 $\D(\A)\times\{\beta\}$, respectively.
 With this notation we also have $Y_{\alpha}\cong\D(\A)$ and
 $Y_\beta\cong\D(\A)^\partial$.  
The rest of the proof follows directly from
 Theorem~\ref{Theo:RevEng} and the fact that finite coproducts 
in $\CP$ correspond to disjoint unions \cite[Theorem 6.2.4]{CD98}.
\end{proof}

\begin{figure}  [ht]
\begin{center}
\begin{tikzpicture} [scale=.55]
 [inner sep=10mm,
 [auto,
 text depth=0.25ex,
 wiggly/.style={decorate,decoration={snake,amplitude=1pt,segment length=5pt,
                pre length=3pt,post length=3pt}}, 
]

\draw [thin] (-4.5,-2)--(-2.5,-2) --(-2.5,1)--(-4.5,1)--(-4.5,-2);
\draw  [thin] (-1,-2)--(1,-2) --(1,1)--(-1,1)--(-1,-2);

\draw [thin] (10,-2)--(12,-2) --(12,1)--(10,1)--(10,-2);
\draw  [thin] (12,-2)--(14,-2) --(14,1)--(12,1)--(12,-2);
\node (YA) at (-1.5,-3) {$(Y_{\A};\preccurlyeq)$};
\node (HUA) at (12.5,-3) {$\fnt{HU}(\A)$};
\node (Yalpha) at  (-5.2,0) {$Y_{\alpha}$};
\node (Ybeta) at (-1.6,0) {$Y_{\beta}$};
\node (geqk) at (0,-1) {$\geq_k$};
\node (leqk) at (-3.5,-1) {$\leq_k$};

\draw [->,decorate,decoration={snake,amplitude=1pt,segment length=5pt,
                pre length=3pt,post length=3pt}]
(3.5,-1)-- node [yshift=.5cm]  {$z \mapsto [z]_{\approx}$} (8,-1);  
\end{tikzpicture}
\end{center}
\caption{Obtaining  $\fnt{HU}(\A)$  from $\D(\A) $ \label{fig:bddquot}}
\end{figure}

Figure~\ref{fig:bddquot} shows the very simple way in which Theorem~\ref{Theo:RevEngDB}  tells us how to pass 
from the natural dual $\D(\A)$ of  $\A \in 
\DB$ 
to the Priestley space ${\fnt{HU}(\A)= \fnt{H}(\A_t)}$. 
We start from
 copies~$Y_\alpha$ and $Y_\beta$ 
of $\D(\A)$, indexed by the points $\alpha$ and $\beta$ of $\Omega
=\fnt{HU}(\A)$. 
 The relation $\preccurlyeq$ gives us the partial order on 
$Y_\alpha \cup Y_\beta$ which restricts to $\leq_k$ on $Y_\alpha$
and $\geq_k$ on $Y_\beta$.  The relation $\approx$ makes no identifications; in the right-hand diagram the two order comments 
are regarded as subsets of a single Priestley space; in the left-hand diagram 
they are regarded as two copies of the natural dual space.  This very simple picture should be contrasted with the somewhat more complicated one we obtain below for the unbounded case; see   Figure~\ref{fig:unbddquot}.

Theorem~\ref{Theo:RevEngDB} shows us
how to obtain $\fnt H(\A_t)$ from $\D(\A)$. 
We conclude that for each $\A\in\CA$, the $t$-lattice reduct of $\A$ is isomorphic to $\Lalg\times \Lalg^{\partial}$ where $\Lalg=\fnt{KD}(\A)$. We will now see how to 
capture 
in  
 $\fnt H(\A_t)$ 
the algebraic operations suppressed by $\U$. 
Drawing on Theorem~\ref{Theo:RevEngOps} 
we have
\begin{alignat*}{2}
\bar{\neg}_{\A}([(x,\alpha)])&=[(x,\beta)],
 &\bar{\neg}_{\A}([(x,\beta)])&=[(x,\alpha)];\\
\widehat{\neg^{\A}}(\alpha\circ x)&=\beta\circ x, 
 &\widehat{\neg^{\A}}(\beta\circ x) &=\alpha\circ x;\\
\bar{1_k}\,_{\A}&=Y_{\alpha}, &
\bar{0_k}\,_{\A}&=Y_{\beta};
\\
\widehat{1_k^{\A}}& =\{\,\alpha\circ x\mid x\in\D(\A)\,\},
&\widehat{0_k^{\A}}&=\{\,\beta\circ x\mid x\in\D(\A)\,\}.
\end{alignat*}
From this and Theorem~\ref{Theo:RevEngDB}, we obtain   ${\fnt{KD}(\A)\cong\A_t/\theta}$ for each $\A\in\DB$, where $\theta$ 
is the congruence defined by $a\, \theta\,  b$ 
if and only if $a\land_t 1_k=b\land_t 1_k$. 
Clearly~$\A_t/\theta$ is also 
isomorphic  
to the sublattice of~$\A_t$ determined by the set $\{\,a\in A\mid a\leq_t 1_k\,\}$.

Since the duality  we developed for $\DB$ was based on 
the piggyback duality using $\A_t$ as the $\CCD$-reduct,
 Theorem~\ref{Theo:RevEng} does not give  us direct access to 
the $k$-lattice operations.
 Lemma~\ref{90deg} tells us that with the knowledge constants
 and the $t$-lattice operations we can  access 
the $k$-lattice operations. 
But  there is a way to recover the $k$-lattice operations
 directly from the dual space,  and this  can be adapted to cover
 the unbounded case too.

Take,   as before,  $\CA =\DB$, $\M=\four$  and 
 $\Omega=\{\alpha,\beta\}$.
Let $\A\in\CA$ and $ Y_{\A}=\D(\A)\times\Omega$. 
Define a partial order $\preccurlyeq'\, \subseteq Y_{\A}^2$ by  
$(x,\w)\preccurlyeq'(y,\w')$ if $\w=\w'$ and $x\leq y$ in $\D(\A)$. 
It is clear that $(Y_{\A};\preccurlyeq',\Tp_{Y_{\A}})\cong \D(\A)\coprod_{\CP}\D(\A)$. 
We claim that $\fnt{H}(\A_k)\cong (Y_{\A};\preccurlyeq',\Tp_{Y_{\A}})$. 
To prove this,
observe that, since $\alpha^{-1}(1)=\{11,01\}$ is a filter of 
the lattice $\four_k$,  the map $\alpha$ is a lattice homomorphism 
from $\four_k$ into~$\two$. And since $\beta^{-1}(1)=\{11,10\}$ 
is an ideal in $\four_k$  the map $\beta'=\cnst 1  -\beta$,
is a lattice homomorphism from $\four_k$ into $\two$.  
It follows  that we have a well-defined  map 
$\eta_{\A}\colon Y_{\A}\to \fnt{H}(\A_k)$ given by
\[
  \eta_{\A}(x,\w)=\begin{cases}
    \w\circ x&\mbox{if } \w=\alpha,\\
    \cnst 1-\w\circ x&\mbox{if } \w=\beta.\\
  \end{cases}
\]

Assume that $(x,\w)\preccurlyeq'(y,\w')$. 
Then $\w=\w'$ and for each $a\in\A$ we have $x(a)\leq_k y(a)$ in $\four$. 
Since $\alpha$ is a $k$-lattice homomorphism, if $\w=\w'=\alpha$, then
 \[\eta_{\A}(x,\alpha)(a)=\alpha(x(a))\leq \alpha(y(a))=\eta_{\A}(y,\alpha)(a),\] for each $a\in\A$.
If instead $\w=\w'=\beta$, we have $\beta_{\A}(x(a))\geq \beta_{\A}(y(a))$ for each $a\in \A$, then $\eta_{\A}(x,\beta)(a)=1-\beta(x(a))\leq 1-\beta_{\A}(y(a))=\eta_{\A}(y,\beta)(a)$. 
 Therefore $\eta_{\A}$ preserves $\preccurlyeq'$. 
To see that~$\eta_{\A}$ also reverses the order,  assume $\eta_{\A}(x,\w)\leq \eta_{\A}(y,\w')$. Then  $\eta_{\A}(x,\w)(a)\leq \eta_{\A}(y,\w')(a)$ in $\two$, for each $a\in\A$. 
Since $\alpha(1_t)=1\not\leq 0=1-\beta(1_t)$ and $1=\beta(0_t)=1\not\leq 0=\alpha(1_t)$ it follows that $\w=\w'$. 
Now assume that $\w=\w'=\alpha$, then $\alpha(x(a))\leq \alpha(y(a))$, for each $a\in \A$, equivalently $(x(a),y(a))\in r_{\alpha,\alpha}=\leq_k$ for each $a\in\A$. 
By Theorem~\ref{DBUnatdual}, 
$x\leq y$ in $\D(\A)$. We obtain $(x,\w)\preccurlyeq'(y,w)$.
If $\w=\w'=\beta$ 
we argue in the same way,
 using the fact that  $r_{\beta,\beta}$ is $\geq_k$.

Finally, observe that for each $a\in\A$, $b\in\four$ and $i\in\two$,
\allowdisplaybreaks
\begin{align*}  
\eta_{\A}(\{\,x\in\D(\A)\mid x(a)=b\}\times\{\alpha\,\})&=\{\,z\in\fnt{H}(\A_k)\mid z(a)=\alpha(b)\,\}\\
&\ \ \cap\{\,z\in\fnt{H}(\A_k)\mid z(\neg^{\A}a)=\alpha(\neg^{\four} b)\,\};\\
\eta_{\A}(\{\,x\in\D(\A)\mid x(a)=b\,\}\times\{\beta\})&=\{\,z\in\fnt{H}(\A_k)\mid z(a)\neq\beta(b)\,\}\\
&\ \ \cap\{\,z\in\fnt{H}(\A_k)\mid z(\neg^{\A}a)\neq\beta(\neg^{\four} b)\,\};\\
(\eta_{\A})^{-1}(\{\,z\in\fnt{H}(\A_k)\mid z(a)=i\,\})&=\{\,x\in\D(\A)\mid x(a)\in\alpha^{-1}(1)\}\times\{\,\alpha\,\}\\
&\ \ \cup\{\,x\in\D(\A)\mid x(a)\in\beta^{-1}(1-i)\,\}\times\{\beta\,\}.
\end{align*}
Then $\eta_{\A}$ is a homeomorphism.
Hence, as claimed, 
 $\fnt{H}(\A_k)\cong (Y_{\A};\preccurlyeq',\Tp_{Y_{\A}})$. 
Since 
$(Y_{\A};\preccurlyeq',\Tp_{Y_{\A}})\cong \D(\A)\coprod_{\CP}\D(\A)$,
 we conclude that $\A_k\cong\Lalg\times\Lalg$, where~$\Lalg$ denotes the lattice~$\fnt{KD}(\A)$.

Theorem~\ref{Theo:RevEngDB} can be seen as 
the product representation theorem for distributive bilattices
expressed in dual form. 
We recall that, given a distributive lattice
 $\Lalg  =(L; \lor,\land,0,1)$,
then 
$\Lalg \odot \Lalg $ denotes the distributive bilattice with  universe 
$L \times L$ and lattice operations given by
\begin{alignat*}{2}
(a_1,a_2) \lor_t (b_1,b_2)  & = (a_1 \lor b_1, a_2 \land b_2), 
\quad & 
(a_1,a_2) \lor_k (b_1,b_2)  & = (a_1 \lor b_1, a_2 \lor b_2), \\
(a_1,a_2) \land_t (b_1,b_2)  & = (a_1 \land b_1, a_2 \lor b_2), \quad &
(a_1,a_2) \land _k (b_1,b_2)  & = (a_1 \land b_1, a_2 \land b_2);
\end{alignat*}
negation is given by $\neg(a) = (b,a)$
and the  constants by 
$0_t = (0,1)$, $1_t = (1,0)$,
$0_k = (0,0)$ and $1_k = (1,1)$.
A well-known example 
is the representation of $\four$ as $\two\odot\two$. More precisely, 
$h\colon\four\to \two\odot\two$ defined by $h(ij)=(i,1-j)$, for $i,j\in\{0,1\}$,  is an isomorphism.

As a consequence of Theorem~\ref{Theo:RevEngOps} we obtain the following result.

\begin{thm}\label{Theo:ProdFunDB}
Let $\fnt{V}\colon \DB\to \CCD$ and $\fnt{W}\colon \CCD\to \DB$ be the functors defined~by:
\begin{alignat*}{3}
&\text{on objects:} & \hspace*{2.25cm} &\A\longmapsto \fnt{V}(\A)=[0_k,1_t],
\hspace*{2.2cm}
\phantom{\text{on objects:}} &&\\
&\text{on morphisms:}  & & \, \,h \longmapsto \fnt{V}(h) =h{\restriction}_{[0_k,1_t]},
&& \\
\shortintertext{where $[0_k,1_t]$ is considered as a sublattice of $\A_{t}$ with bounds $0_k$ and $1_t$,  and} 
&\text{on objects:}  &  &\Lalg\longmapsto \fnt{W}(\Lalg) = \Lalg\odot\Lalg, &&\\
&\text{on morphisms:}  &  &\,\, g \longmapsto \fnt{W}(g)\colon (a,b)\mapsto (g(a),g(b)).  && 
\end{alignat*}

\noindent
Then 
$\fnt{V}$ and $\fnt{W}$ are naturally equivalent to  $\fnt{KD}$ and $\fnt{EH}$, respectively.
\end{thm}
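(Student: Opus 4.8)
The plan is to show that $\fnt{V}$ and $\fnt{W}$ form a pair of mutually quasi-inverse equivalences between $\DB$ and $\CCD$, and then to identify this pair with $(\fnt{KD},\fnt{EH})$, using that the latter is already known (see the opening of this section) to be such a pair and that the quasi-inverse of an equivalence is unique up to natural isomorphism. Most of the object-level content is already in hand from the discussion following Theorem~\ref{Theo:RevEngDB}; the new work is to make it functorial and to verify naturality.

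The first, and main, step is the natural isomorphism $\mathrm{id}_{\DB}\cong\fnt{W}\circ\fnt{V}$, which is the product representation of $\DB$ written algebraically. For $\A\in\DB$ I would define $\psi_{\A}\colon\A\to\fnt{V}(\A)\odot\fnt{V}(\A)$ by $\psi_{\A}(a)=(a\lor_t 0_k,\ \neg(a\land_t 0_k))$; both coordinates lie in $[0_k,1_t]=\fnt{V}(\A)$ since $\neg$ interchanges $0_t,1_t$ and fixes $0_k$. Then, by routine bounded-distributive-lattice calculations using the De~Morgan law, the identity $\neg a\lor_t 0_k=\neg(a\land_t 0_k)$, Lemma~\ref{90deg}(i), and the fact that $\{0_k,1_k\}$ is a complemented pair in $\A_t$, one checks that $\psi_{\A}$ preserves $\lor_t,\land_t,\neg$ and the four constants, and hence also $\lor_k$ and $\land_k$; that it is injective, because $a=(a\land_t 0_k)\lor_t\bigl((a\lor_t 0_k)\land_t 1_k\bigr)$ recovers $a$ from $\psi_{\A}(a)$; and that it is surjective, a preimage of $(u,v)$ being $(\neg v)\lor_t(u\land_t 1_k)$. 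Naturality of $\psi$ is immediate from the defining formula, as every $\DB$-morphism commutes with $\lor_t,\land_t,\neg$ and $0_k$; and on $\four$ one recovers, via $\fnt{V}(\four)\cong\two$, the familiar isomorphism $ij\mapsto(i,1-j)$ onto $\two\odot\two$. The companion isomorphism $\fnt{V}\circ\fnt{W}\cong\mathrm{id}_{\CCD}$ is trivial: in $\Lalg\odot\Lalg$ the interval $[0_k,1_t]$ is exactly $\{(a,0)\mid a\in L\}$, and $a\mapsto(a,0)$ is a natural isomorphism since $\fnt{W}(g)$ acts coordinatewise.

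It now follows that $\fnt{V}$ and $\fnt{W}$ are mutually quasi-inverse, so both are equivalences and in particular $\fnt{W}$ is full and faithful. To see that $\fnt{V}$ is $\fnt{KD}$ itself, and not merely $\fnt{KD}$ composed with some auto-equivalence of $\CCD$, I would check that $\D\circ\fnt{W}\cong\fnt{H}$ as functors $\CCD\to\CP$:
\[
\D(\fnt{W}(\Lalg))=\DB(\Lalg\odot\Lalg,\four)\cong\DB(\fnt{W}(\Lalg),\fnt{W}(\two))\cong\CCD(\Lalg,\two)=\fnt{H}(\Lalg),
\]
where the middle step uses $\four\cong\two\odot\two$ and the last uses fullness and faithfulness of $\fnt{W}$. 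The bijection $g\leftrightarrow\fnt{W}(g)$ is a homeomorphism, and since the knowledge order $\leq_k$ on $\four$ corresponds under $\four\cong\two\odot\two$ to the coordinatewise order on $\two\odot\two$, it also matches the two Priestley orders; it is plainly natural in $\Lalg$. Applying $\fnt{K}$ gives $\fnt{KD}\circ\fnt{W}\cong\fnt{K}\circ\fnt{H}\cong\mathrm{id}_{\CCD}$.

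The proof then closes formally. From $\fnt{EH}\circ\fnt{KD}\cong\mathrm{id}_{\DB}$ and $\fnt{KD}\circ\fnt{W}\cong\mathrm{id}_{\CCD}$ we obtain $\fnt{W}\cong\fnt{EH}\circ\fnt{KD}\circ\fnt{W}\cong\fnt{EH}$; consequently $\fnt{V}\circ\fnt{EH}\cong\fnt{V}\circ\fnt{W}\cong\mathrm{id}_{\CCD}$, whence $\fnt{V}\cong\fnt{V}\circ\fnt{EH}\circ\fnt{KD}\cong\fnt{KD}$. I expect the only parts needing genuine care to be the verification that $\psi_{\A}$ is a bijective homomorphism, where Lemma~\ref{90deg} and the complemented pair $\{0_k,1_k\}$ carry the argument, and the isomorphism $\D\circ\fnt{W}\cong\fnt{H}$ in the third step, whose essential content is the single observation that the knowledge order on $\four$ becomes coordinatewise order on $\two\odot\two$; everything else is routine bookkeeping.
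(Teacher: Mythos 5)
Your proof is correct, but it reverses the paper's logical flow and is worth contrasting with it. The paper obtains this theorem as a by-product of the dual-side analysis: Theorem~\ref{Theo:RevEngDB} (itself resting on the quotient construction of Theorem~\ref{Theo:RevEng} and the explicit piggyback relations $r_{\alpha,\alpha}={\leq_k}$, $r_{\beta,\beta}={\geq_k}$, $R_{\alpha,\beta}=R_{\beta,\alpha}=\emptyset$) gives $\fnt{H}(\A_t)\cong\D(\A)\coprod_{\CP}\D(\A)^{\partial}$, and then the constants are located dually ($\bar{1_k}\,_{\A}=Y_{\alpha}$, $\bar{0_k}\,_{\A}=Y_{\beta}$ via Theorem~\ref{Theo:RevEngOps}) to identify $\fnt{KD}(\A)$ with $\A_t/\theta$ and hence with the interval sublattice; the product representation (Corollary~\ref{DBprodrep}) then falls out as a consequence. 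You instead prove the product representation first, by exhibiting $\psi_{\A}\colon a\mapsto(a\lor_t 0_k,\neg(a\land_t 0_k))$ and verifying directly that it is a natural isomorphism $\mathrm{id}_{\DB}\cong\fnt{W}\fnt{V}$ --- precisely the kind of hands-on algebraic computation the paper's methodology is designed to avoid --- and then pin down the identification of $(\fnt{V},\fnt{W})$ with $(\fnt{KD},\fnt{EH})$ by a formal quasi-inverse argument whose only duality-theoretic input is $\D\circ\fnt{W}\cong\fnt{H}$, i.e.\ the single observation that $\leq_k$ on $\four$ transports to the coordinatewise order on $\two\odot\two$. Your awareness that this last step is genuinely needed (to rule out composition with a non-inner auto-equivalence of $\CCD$ such as order-duality) is a point in your favour, and all the computations I checked ($\psi_{\A}$ a bijective homomorphism, the recovery formulas, the order-matching in $\D\fnt{W}\cong\fnt{H}$) are sound. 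What you lose is the paper's narrative economy --- in your arrangement Corollary~\ref{DBprodrep} must be proved before, and independently of, the duality rather than deduced from it --- and you duplicate work already done in Section~\ref{Sec:DisPiggyDual}; what you gain is a self-contained argument that does not depend on Theorems~\ref{Theo:RevEng}--\ref{Theo:RevEngOps} at all.
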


\begin{coro}  {\rm (The Product Representation Theorem for 
distributive bilattices)} \label{DBprodrep}
Let $\A \in \DB$.   Then
there exists $\Lalg= (L; \lor,\land,0,1) \in \CCD$ such that
$\A \cong \Lalg \odot \Lalg$.
\end{coro}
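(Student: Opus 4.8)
The statement is an immediate consequence of Theorem~\ref{Theo:ProdFunDB} together with the dual equivalences already established, so the plan is mainly one of assembly. Fix $\A\in\DB$ and put $\Lalg=\fnt{V}(\A)=[0_k,1_t]$, regarded as a bounded distributive lattice with bottom $0_k$ and top $1_t$. This is a legitimate object of $\CCD$: by Lemma~\ref{90deg}(ii) we have $0_k\leq_t 1_t$, and $[0_k,1_t]$ is a (convex) sublattice of the distributive lattice $\A_t$, with $0_k,1_t$ acting as its universal bounds.

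The first step is to observe that the natural duality of Theorem~\ref{DBnatdual} gives a dual equivalence $\D\colon\DB\to\CP$, $\E\colon\CP\to\DB$, while Priestley duality gives a dual equivalence $\fnt{H}\colon\CCD\to\CP$, $\fnt{K}\colon\CP\to\CCD$. Composing these two contravariant equivalences yields a covariant categorical equivalence implemented by $\fnt{KD}=\fnt{K}\circ\D\colon\DB\to\CCD$ and $\fnt{EH}=\E\circ\fnt{H}\colon\CCD\to\DB$; in particular $\fnt{EH}\circ\fnt{KD}$ is naturally isomorphic to $\operatorname{id}_{\DB}$, the relevant natural isomorphism being built from the evaluation maps $e_{\A}$ and the Priestley evaluation isomorphisms. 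The second step is to invoke Theorem~\ref{Theo:ProdFunDB}, which tells us that $\fnt{V}$ is naturally equivalent to $\fnt{KD}$ and $\fnt{W}$ to $\fnt{EH}$. Hence $\fnt{W}\circ\fnt{V}$ is naturally isomorphic to $\fnt{EH}\circ\fnt{KD}$, and so to $\operatorname{id}_{\DB}$. Evaluating this natural isomorphism at $\A$ gives
\[
\A\;\cong\;\fnt{W}(\fnt{V}(\A))\;=\;\fnt{W}(\Lalg)\;=\;\Lalg\odot\Lalg,
\]
which is exactly the asserted representation.

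There is no genuine obstacle: all the substance has been discharged in Theorems~\ref{DBnatdual}, \ref{Theo:RevEngDB}, \ref{Theo:RevEngOps} and \ref{Theo:ProdFunDB}, and what remains is the formal composition of natural isomorphisms; the only point requiring a moment's care is confirming that a composite of two contravariant equivalences is a covariant equivalence so that $\fnt{EH}\circ\fnt{KD}\cong\operatorname{id}_{\DB}$. For a reader who prefers an argument not routed through the functor $\fnt{W}$, one can instead reason directly from the earlier material: Theorem~\ref{Theo:RevEngDB} gives $\fnt{H}(\A_t)\cong\D(\A)\coprod_{\CP}\D(\A)^{\partial}$, so applying $\fnt{K}$ shows that the $t$-lattice reduct $\A_t$ is isomorphic to $\Lalg\times\Lalg^{\partial}$ with $\Lalg=\fnt{KD}(\A)$, matching the $t$-lattice operations of $\Lalg\odot\Lalg$; the computations following Theorem~\ref{Theo:RevEngDB} then identify $\neg^{\A}$ and the knowledge constants $0_k^{\A},1_k^{\A}$ under this isomorphism with $\neg$ and the constants $(0,0),(1,1)$ of $\Lalg\odot\Lalg$; and since, by the $90^\circ$~Lemma, $\lor_k$ and $\land_k$ are term-definable from $\lor_t,\land_t,0_k,1_k$, the isomorphism automatically respects the knowledge operations as well. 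Either route delivers $\A\cong\Lalg\odot\Lalg$.
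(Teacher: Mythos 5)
Your proposal is correct and follows essentially the same route as the paper: the corollary is stated there as an immediate consequence of Theorem~\ref{Theo:ProdFunDB}, whose content is precisely that $\fnt{W}\circ\fnt{V}$ is naturally isomorphic to $\fnt{EH}\circ\fnt{KD}\cong \operatorname{id}_{\DB}$, so that $\A\cong\fnt{V}(\A)\odot\fnt{V}(\A)$. Your alternative sketch via Theorem~\ref{Theo:RevEngDB} and the $90^\circ$ Lemma likewise matches the paper's own preparatory discussion, so nothing genuinely new or problematic arises.
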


We can now see the relationship between our natural duality for
$\DB$  and the dualities presented 
for this class 
in \cite{MPSV,JR}.  In \cite{MPSV}, the duality for $\DB$ is obtained by first proving that the product representation is part of an equivalence between the categories $\DB$ and $\CCD$. 
 The duality assigns 
to each 
$\A$ in $\DB$ the Priestley space $\fnt{H}([0_t,1_k])$, where the interval  $[0_t,1_k]$ is considered as a sublattice of $\A_t$. Then the functor
 from $\DB$ to $\CP$
 defined in \cite[Corollaries~12 and~14]{MPSV} corresponds to $\fnt{HV}$ where $\fnt{V}\colon\DB\to\CCD$ is as defined in Theorem~\ref{Theo:ProdFunDB}. 
The duality in \cite{JR},  
is arrived at by a different route.
At the object level, 
 the authors consider first the De Morgan reduct of a bilattice and then 
enrich its dual structure by adding   
two clopen 
up-sets  
of the dual which represent the constants $0_k$ and $1_k$. 
In the notation of Theorem~\ref{Theo:RevEngOps} their duality is based on the functor $\fnt{HU}^+$ by considering $\CA=\DB$ with only one lattice dual-endomorphism and two constants.  
The connection between their duality and ours follows from 
Theorems~\ref{Theo:RevEng} and~\ref{Theo:RevEngOps}. Firstly, Theorem~\ref{Theo:RevEng} tells
 us how to obtain $\fnt{L}$ from $\D$. 
Then Theorem~\ref{Theo:RevEngOps} shows how to enrich this functor to obtain $\fnt{L}^+$ and  confirms that the latter is naturally equivalent to
$\fnt{HU}^+$.


\begin{figure}  [ht]
\begin{center}
\begin{tikzpicture} [scale=.35]
 [inner sep=10mm,
 [auto,
 text depth=0.25ex,
 wiggly/.style={decorate,decoration={snake,amplitude=1pt,segment length=5pt,
                pre length=3pt,post length=3pt}}, 
]

\path (-3.5,2) 
node (topL) [scale=.4,shape=circle,draw]   {}
(-3.5,-3)
node (botL) [scale=.4,shape=circle,draw,gray,fill=gray]   {}
(2.5,-3)
node (botR) [scale=.4,shape=circle,draw,gray,fill=gray]  {}
(2.5,2)
node (topR) [scale=.4,shape=circle,draw]  {}
(-.5,8)
node (top1) [scale=.4,shape=circle,draw]  {}
(-.5,3)
node (bot1) [scale=.4,shape=circle,draw]  {}
(-.5,-4)
node (top0) [scale=.4,shape=circle,draw,gray,fill=gray]  {}
(-.5,-9)
node (bot0) [scale=.4,shape=circle,draw,gray,fill=gray]  {};

;
\path
 (-4.5,-2)
node   (swL)  [inner sep=0mm] {}
 (-2.5,-2)
node (seL)  [inner sep = 0mm] {}
 (-2.5,1) 
node   (neL) [inner sep=0mm] {}
(-4.5,1)
node (nwL)  [inner sep=0mm] {}
(1.5,-2)
node  (swR)   [inner sep=0mm] {}
 (3.5,-2)
node  (seR)   [inner sep=0mm] {}
 (3.5,1)
node  (neR)   [inner sep=0mm] {}
 (1.5,1)
node (nwR)   [inner sep=0mm]{}
(-1.5,4) 
 node (sw1)   [inner sep=0mm] {}
 (.5,4) 
node  (se1)   [inner sep=0mm] {}
(.5,7)
node  (ne1)   [inner sep=0mm] {}
 (-1.5,7) 
node (nw1)   [inner sep=0mm] {}
(-1.5,-8)
node (sw0)  [inner sep=0mm] {}
(.5,-8)
node (se0)   [inner sep=0mm] {}
 (.5,-5)
node (ne0)   [inner sep=0mm] {}
(-1.5,-5)
node (nw0)   [inner sep=0mm] {};

\draw   [thin] (topL) to (nwL);
\draw  [thin]  (topL) to (neL);  
\draw  [thin]  (botL) to (swL);
\draw   [thin] (botL) to (seL); 

\draw   [thin] (top1) to (nw1);
\draw  [thin]  (top1) to (ne1);  
\draw  [thin] (bot1) to (sw1);
\draw  [thin] (bot1) to (se1); 

\draw  [thin]  (top0) to (nw0);
\draw  [thin] (top0) to (ne0);  
\draw  [thin]  (bot0) to (sw0);
\draw  [thin]  (bot0) to (se0); 

\draw [thin] (topR) to (nwR);
\draw (topR) to (neR);  
\draw (botR) to (swR);
\draw (botR) to (seR);

\draw [thin] (-4.5,-2)--(-2.5,-2) --(-2.5,1)--(-4.5,1)--(-4.5,-2);
\draw  [thin] (1.5,-2)--(3.5,-2) --(3.5,1)--(1.5,1)--(1.5,-2);
\draw  [thin] (-1.5,4)--(.5,4) --(.5,7)--(-1.5,7)--(-1.5,4);
\draw [thin]  (-1.5,-8)--(.5,-8) --(.5,-5)--(-1.5,-5)--(-1.5,-8);

\draw [thin] (12,-2)--(14,-2) --(14,1)--(12,1)--(12,-2);
\draw  [thin] (14,-2)--(16,-2) --(16,1)--(14,1)--(14,-2);
\path
(12,-2)
node (swQ) [inner sep=0mm] {}
(12,1) 
node  (nwQ)  [inner sep=0mm] {}
(16,1) 
node (neQ)  [inner sep=0mm] {}
(16,-2) 
node  (seQ) [inner sep=0mm] {}
(14,4)
node (topQ) [scale=.4,shape=circle,draw,gray]  {}
(14,-4)
node (botQ) [scale=.4,shape=circle,draw,gray,fill=gray]  {};

\draw [thin] (seQ) to (botQ);
\draw [thin] (swQ) to (botQ);
\draw [thin] (neQ) to (topQ);
\draw [thin] (nwQ) to (topQ);

\draw [ultra thin,rounded corners] 
(-4.15,1.5) --(-2.2,8.5)--(.9,8.5)--(3.5,1.5)--(-4.3,1.5)--(-2.2,8.5);   
 
\draw [ultra thin,rounded corners] 
(-4.15,-2.5) --(-2.2,-9.5)--(.9,-9.5)--(3.5,-2.5)--(-4.3,-2.5)--(-2.2,-9.5);

\draw [very thin,->] (top0) to (botL);
\draw [very thin,->]  (top0) to (botR);
\draw [very thin,->]  (topL) to (bot1);
\draw [very thin,->]  (topR) to (bot1);

\draw [->,decorate,decoration={snake,amplitude=1pt,segment length=5pt,
                pre length=3pt,post length=3pt}]
(5,0)-- node [yshift=.5cm]  {$z \mapsto [z]_{\approx}$} (10.5,0);


\node (YA) at (4.5,-6) {$(Y_{\A};\preccurlyeq)$};
\node (HUA) at (14,-6) {$\Hu\Uu(\A)$};
\node (Yalpha) at  (-5.2,0) {$Y_{\alpha}$};
\node (Ybeta) at (.8,0) {$Y_{\beta}$};
\node (Yone) at (-2.1,6) {$Y_{\onebar}$};
\node (Yzero) at (-2.1, -6) {$Y_{\zerobar}$};
\node (leqk) at (-3.5,-1) {$\leq_k$};
\node (geqk) at (2.5,-1) {$\geq_k$};
\end{tikzpicture}
\end{center}
\caption{Obtaining $\Hu\Uu(\A)$ from $\Du(\A)$\label{fig:unbddquot}} 
\end{figure}

We now turn to  the unbounded case.
noting that, 
as regards dual representations, our results are entirely new,
since neither \cite{MPSV} nor \cite{JR} considers duality for unbounded distributive bilattices.  
We shall rely on 
Theorem~\ref{Theo:RevEngu}
to obtain a suitable  description of 
$\Ku\Du$ and $\Eu\Hu$. 
Fix $\A \in \DBU$ and let 
$Y_\w = \D(\A) \times \{ \w\}$, for 
$\w \in \Omega = \{\alpha,\beta,\zerobar,\onebar\}$.
Let $X$ be the doubly-pointed 
Priestley space obtained as in 
Theorem~\ref{Theo:RevEngu} by quotienting the pre-order 
$\preccurlyeq$ to obtain  a partial order.  
Note that $\D(\A)$ ordered by the pointwise lifting of $\leq_k$ has 
 top and bottom elements,
{\it viz.}~the constant maps onto~$10$ and  onto~$01$, respectively.   
Hence,  by Proposition~\ref{DBUpigsub}(i)(c)--(d), 
$Y_{\zerobar}$ 
collapses  to a single  point and is identified with the bottom point of
$Y_\alpha$ and the top point of $Y_\beta$.   In the same way, $Y_{\onebar}$
collapses to a point and is identified with the top point of $Y_\alpha$ and with the bottom point of $Y_\beta$. No additional identifications are made.  
This argument proves the following theorem.

\begin{thm} \label{Theo:RevEngDBU}  Let 
$\Du\colon\DBU\to\CP_{01}$ and $\Eu\colon\CP_{01}\to\DBU $ be the functors setting up the duality presented in Theorem~{\upshape\ref{DBUnatdual}}.
Then for each $\A\in \DBU$ the Priestley dual 
$\Hu(\A_t)$ of the $t$-lattice reduct of $\A$ is 
such that
\[
\textstyle\Hu(\A_t)\cong\Du(\A)\coprod_{\CP_{01}}\Du(\A)^\partial,
\]
where $\cong$ denotes an isomorphism of doubly-pointed Priestley spaces.
\end{thm}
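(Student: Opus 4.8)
The plan is to argue exactly as in the proof of Theorem~\ref{Theo:RevEngDB}, but now over the four-element set $\Omega=\Hu\Uu(\fourU)=\{\alpha,\beta,\zerobar,\onebar\}$ and using the full list of piggybacking relations computed in Proposition~\ref{DBUpigsub}. Apply Theorem~\ref{Theo:RevEngu} with $\M=\fourU$: it tells us that the doubly-pointed Priestley space $\Hu\Uu(\A)=\Hu(\A_t)$ is isomorphic to $(Y_{\A}/_{\approx};\preccurlyeq/_{\approx},c_0,c_1,\Tp_{Y_{\A}}/_{\approx})$, where $Y_{\A}=\Du(\A)\times\Omega$ carries the pre-order $\preccurlyeq$ induced by the relations $r\in R_{\w_1,\w_2}$ and $\approx\,=\,\preccurlyeq\cap\succcurlyeq$. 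So it suffices to identify this quotient, together with its two distinguished points, with $\Du(\A)\coprod_{\CP_{01}}\Du(\A)^{\partial}$.

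First I would describe the four fibres $Y_{\w}=\Du(\A)\times\{\w\}$. By Proposition~\ref{DBUpigsub}(i)(a), $r_{\alpha,\alpha}$ is $\leq_k$ and $r_{\beta,\beta}$ is $\geq_k$, so $\preccurlyeq$ lifts the order of $\Du(\A)$ on $Y_{\alpha}$ and its reverse on $Y_{\beta}$; one checks that no cross-fibre relation identifies two distinct points inside a single fibre, so $Y_{\alpha}\cong\Du(\A)$ and $Y_{\beta}\cong\Du(\A)^{\partial}$ as Priestley spaces. Since $r_{\zerobar,\zerobar}=r_{\onebar,\onebar}=\fourU^2$ by Proposition~\ref{DBUpigsub}(i)(b), each of $Y_{\zerobar}$ and $Y_{\onebar}$ is a single $\approx$-class. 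Note that the pointwise lifting of $\leq_k$ gives $\Du(\A)$ a least element $\bot$ (the constant map onto $01$) and a greatest element $\top$ (the constant map onto $10$); these are exactly the two distinguished points of the doubly-pointed space $\Du(\A)$.

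Next I would pin down the $\approx$-identifications between the fibres. From Proposition~\ref{DBUpigsub}(i)(b),(c): $r_{\zerobar,\alpha}=r_{\zerobar,\beta}=\fourU^2$ make the $\zerobar$-point lie below the whole of $Y_{\alpha}\cup Y_{\beta}$, while $r_{\alpha,\zerobar}=\{01\}\times\fourU$ and $r_{\beta,\zerobar}=\{10\}\times\fourU$ make $(\bot,\alpha)$, respectively $(\top,\beta)$, lie below the $\zerobar$-point; hence the $\zerobar$-class is $\approx$-identified with the least point $(\bot,\alpha)$ of $Y_{\alpha}$ and with the least point $(\top,\beta)$ of $Y_{\beta}$ (recall $Y_{\beta}\cong\Du(\A)^{\partial}$). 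Symmetrically, $r_{\alpha,\onebar}=r_{\beta,\onebar}=\fourU^2$, $r_{\onebar,\alpha}=\fourU\times\{10\}$ and $r_{\onebar,\beta}=\fourU\times\{01\}$ identify the $\onebar$-class with the greatest point of each of $Y_{\alpha}$ and $Y_{\beta}$. Running Proposition~\ref{DBUpigsub}(iii) in parallel shows that the $\zerobar$-class is $c_0$ and the $\onebar$-class is $c_1$. Finally, $R_{\onebar,\zerobar}=\emptyset$ (Proposition~\ref{DBUpigsub}(ii)(c)) shows the $\zerobar$- and $\onebar$-classes stay distinct, and I would check that the remaining relations — in particular the non-singleton sets $R_{\alpha,\beta}$ and $R_{\beta,\alpha}$ of Proposition~\ref{DBUpigsub}(ii)(a),(b) — only relate points already identified with $c_0$ or $c_1$, so by transitivity of $\preccurlyeq$ they contribute no new collapsing.

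Assembling these observations, $Y_{\A}/_{\approx}$ is the disjoint union of $Y_{\alpha}\cong\Du(\A)$ and $Y_{\beta}\cong\Du(\A)^{\partial}$ with their least points glued to a common point $c_0$ and their greatest points glued to a common point $c_1$; a routine computation of finite coproducts in $\CP_{01}$ (they are dual to direct products in $\DU$, just as finite coproducts in $\CP$ are dual to disjoint unions, cf.\ \cite[Theorem~6.2.4]{CD98}) identifies this with $\Du(\A)\coprod_{\CP_{01}}\Du(\A)^{\partial}$. Combining with the isomorphism $\Hu(\A_t)\cong Y_{\A}/_{\approx}$ from Theorem~\ref{Theo:RevEngu} finishes the proof. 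The part that needs the most care — the only real obstacle — is the bookkeeping: verifying that $\preccurlyeq$ has been read off correctly from every piggybacking relation in Proposition~\ref{DBUpigsub} and that $\approx$ collapses nothing beyond the two extreme classes, while checking alongside that the points $c_0,c_1$ produced by Proposition~\ref{DBUpigsub}(iii) really are the bottom and top of the coproduct.
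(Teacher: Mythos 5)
Your proposal is correct and follows essentially the same route as the paper: apply Theorem~\ref{Theo:RevEngu}, read off the pre-order $\preccurlyeq$ on $\Du(\A)\times\{\alpha,\beta,\zerobar,\onebar\}$ from Proposition~\ref{DBUpigsub}, observe that the $\zerobar$- and $\onebar$-fibres each collapse to a point glued to the extremes of $Y_\alpha$ and $Y_\beta$ with no further identifications, and recognise the quotient as the coproduct in $\CP_{01}$. (Only a phrasing slip: coproducts in $\CP$ \emph{are} disjoint unions and are dual to products in $\CCD$, not ``dual to disjoint unions''; the mathematical content you use is right.)
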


Figure~\ref{fig:unbddquot}  illustrates the passage from 
$(\D(\A) \times \Omega;\preccurlyeq,\Tp)$ to $\Hu\Uu(\A)$,
including the way in which the union of the full set of piggybacking relations   supplies a pre-order.  
The pre-ordered 
set $(Y_{\A};\preccurlyeq)$ has as 
its universe four copies of 
$\D(\A)$.  Each copy is depicted in the figure by a linear sum of the form $\boldsymbol 1 \oplus P \oplus \boldsymbol 1$;  
 the top and bottom elements are depicted by circles.  For $Y_{\alpha}$,
$P$ carries the lifting of the partial order $r_{\alpha,\alpha}$, 
that is,
$\leq_k$ lifted 
to $\DBU(\A,\fourU)$; for $Y_{\beta}$ the corresponding order is 
the lifting of 
$\geq_k$
to $\DBU(\A,\fourU)$.  
Theorem~\ref{Theo:RevEngDBU}   shows that 
 $Y_{\onebar}$, together with the top elements of $(Y_\alpha; \leq_k)$ and of $(Y_\beta; \geq_k)$ form a single $\approx$-equivalence class,
and likewise all elements of $Y_{\zerobar}$ and the bottom elements of  
$Y_\alpha$ and of $Y_\beta$ form an $\approx$-equivalence class.  
These are the only $\approx$-equivalence class with more than one element.
Thus the quotienting map which yields $\Hu\Uu(\A)$ operates as shown.
 Topologically, the image $\Hu\Uu(\A)$ carries the quotient topology, 
 so that the top and bottom elements will both be isolated points if and only if 
$\A_t$ is a bounded lattice.

Theorem~\ref{Theo:RevEngDBU} states that $\Hu(\A_t)$ is obtained as the coproduct of the doubly-pointed Priestley spaces $\Du(\A)$ and $\Du(\A)^{\partial}$.   
This coproduct corresponds to 
the product of unbounded distributive lattices $\Lalg=\Ku\Du(\A)$ and $\Lalg^{\partial}$,  that is, $\A_t\cong \Lalg\times \Lalg^{\partial}$.
By  the same argument as 
in the bounded case, 
 $\A_k\cong \Lalg\times \Lalg$. Moreover, using 
the analogue of 
Theorem~\ref{Theo:RevEngOps},
we have 
\begin{alignat*}{2}
\bar{\neg}_{\A}([(x,\alpha)])&
=[(x,\beta)],
\quad \quad & 
\bar{\neg}_{\A}([(x,\beta)])&
=[(x,\alpha)];\\
\widehat{\neg^{\A}}(\alpha\circ x)&=\beta\circ x, &
\widehat{\neg^{\A}}(\beta\circ x)&=\alpha\circ x;\\
\widehat{\neg^{\A}}({\onebar}\circ x)&={\zerobar\circ x},
 &
\widehat{\neg^{\A}}(\zerobar\circ x)&=\onebar\circ x.
\end{alignat*}

The construction of $\Lalg\odot\Lalg$ 
for $\Lalg \in \CCD$ applies equally well to 
 $\Lalg\in\DU$; in this case the unbounded distributive bilattice $\Lalg\odot\Lalg$ is  defined on $L\times L$ by taking  $(\Lalg\odot\Lalg)_t=\Lalg\times\Lalg^{\partial}$,  $(\Lalg\odot\Lalg)_k=\Lalg\times\Lalg$ and  $\neg^{\Lalg\odot\Lalg}(a,b)=(b,a)$, for each $a,b\in L$.

Given  $\A\in\DBU$,  we define $\Lalg=\Ku\Du(\A)$. It follows 
from above 
that $\A\cong\Lalg\odot\Lalg$. Let  $h\colon\A\to \Lalg\odot\Lalg$ denote the isomorphism between $\A$ and $\Lalg\odot\Lalg$.  Then $\Lalg=\A_t/\ker(\rho)$ where $\rho(a)=a_1$ if $h(a)=(a_1,a_2)$. Using the~$\odot$ construction we  
observe that $(a,b)\in\ker(\rho)$ if and only if $a\land_t b=a\lor_k b$. This can also be proved using the fact that closed subspaces of 
doubly-pointed 
Priestley spaces correspond to congruences and that
\[
 \Hu(\Lalg)\cong
Y_{\alpha}=\Du(A)\times\{\alpha\}\cong 
Y_{\alpha}/_{\approx}
\subseteq
Y_{\A}/_{\approx}\cong\Du(\A)\textstyle\coprod_{\CP_{01}}\Du(\A)^\partial\cong \Hu(\A_t).
\]
 Now 
observe that  
the isomorphism $Y_{\A}/_{\approx}\cong \Hu(\A_t)$
 is determined by the unique $\CP_{01}$-morphism
 such that 
$(x,\w) \mapsto \w\circ x$, for $\w \in \{\alpha,\beta\}$,  and that  $\alpha$ is a $\DU$-homomorphism from $\A_t$ to $\twoU$  and also from  $\A_k$ to $\twoU$.  We deduce that   $(x\circ\alpha)(a)=(x\circ\alpha)(b)$ if and only if $a\land_t b=a\lor_k b$.

Our analysis yields the following theorem. 

\begin{thm}\label{Theo:ProdFunDBU}
For $\A \in \DBU$ let 
 $\theta_{\A}=\{\,(a,b)\in\A^2\mid a\land_t b=a\lor_k b\, \}$.  
Let $\unbounded{\fnt{V}}\colon \DBU\to \DU$ and $\unbounded{\fnt{W}}\colon \DU\to \DBU$ be the functors defined as follows:
  \begin{alignat*}{3}
&\text{on objects:} & \hspace*{1.9cm} & \A \longmapsto \unbounded{\fnt{V}}(\A) =\A_t/\theta_{\A}, \hspace*{1.9cm} \phantom{\text{on objects:}} &&\\
&\text{on morphisms:}  & &\,\, h\longmapsto \unbounded{\fnt{V}}(h)\colon 
[a]_{\theta_{\A}} \mapsto [h(a)]_{\theta_{\alg{B}}},
\text{ where 
$h\colon \A \to \B$,} &&\\
 \shortintertext{and}
& \text{on objects:}  & &\,\, \Lalg \longmapsto \unbounded{\fnt{W}}(\Lalg) =\Lalg\odot\Lalg,&& \\
&\text{on morphisms:}  & &\,  \, g \longmapsto \unbounded{\fnt{W}}(g) \colon (a,b)
\mapsto 
(g(a),g(b)). 
\end{alignat*}
Then 
$\unbounded{\fnt{V}}$ and $\unbounded{\fnt{W}}$ are naturally equivalent to  $\Ku\Du$ and $\Eu\Hu$, respectively.
\end{thm}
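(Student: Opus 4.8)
The plan is to establish the two natural equivalences in turn, obtaining $\unbounded{\fnt{V}}\simeq\Ku\Du$ by a direct construction and then deriving $\unbounded{\fnt{W}}\simeq\Eu\Hu$ from it by a categorical argument. As a preliminary one checks that $\unbounded{\fnt{V}}$ and $\unbounded{\fnt{W}}$ are genuinely functors: $\theta_{\A}$ is preserved by every bilattice homomorphism, since $a\land_t b=a\lor_k b$ is an equational condition in the reducts, so $\unbounded{\fnt{V}}(h)$ is a well-defined $\DU$-homomorphism; and the $\odot$-formulas turn any $\DU$-homomorphism $g$ into a bilattice homomorphism $g\odot g$, so $\unbounded{\fnt{W}}$ is well defined.

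\emph{Part 1: $\unbounded{\fnt{V}}\simeq\Ku\Du$.} For $\A\in\DBU$, the discussion preceding the theorem already identifies $\unbounded{\fnt{V}}(\A)=\A_t/\theta_{\A}$ with $\Lalg=\Ku\Du(\A)$: the product-representation isomorphism restricts to a surjective $\DU$-homomorphism $\A_t\to\Lalg$ whose kernel is exactly $\theta_{\A}$. What is left is to present this isomorphism in a functorial, choice-free way. Let $\alpha\colon\fourU\to\twoU$ be the first-coordinate homomorphism, so that $\alpha^{-1}(1)=\{10,11\}$; then $\alpha$ restricts to a $\DU$-homomorphism on the $t$-reduct of $\fourU$ and, simultaneously, is a $\CP_{01}$-morphism $\fourDBTU\to\twiddle 2_{01}$, since it carries the distinguished points $01,10$ to $0,1$ and $\{10,11\}$ is an up-set of $\leq_k$. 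Define $\eta_{\A}\colon\A_t/\theta_{\A}\to\Ku\Du(\A)$ by letting $\eta_{\A}([a]_{\theta_{\A}})$ be the map $x\mapsto\alpha(x(a))$ on $\Du(\A)$; equivalently, $\eta_{\A}$ is the factorisation through the quotient $\A_t\twoheadrightarrow\A_t/\theta_{\A}$ of the composite $\A\xrightarrow{\,e_{\A}\,}\Eu\Du(\A)\xrightarrow{\,\alpha\circ(-)\,}\Ku\Du(\A)$, where $e_{\A}$ is the evaluation isomorphism. I would then verify, in order: $\eta_{\A}$ is well defined and a $\DU$-homomorphism, using that $\alpha$ is a $\DU$-homomorphism on the $t$-reduct of $\fourU$ and that in $\fourU$ the equality $u\land_t v=u\lor_k v$ is equivalent to $\alpha(u)=\alpha(v)$; $\eta_{\A}$ is injective, by separation, since $\DBU=\ISP(\fourU)$; and $\eta_{\A}$ is surjective, because $\alpha\circ(-)\colon\Eu\Du(\A)\to\Ku\Du(\A)$ is onto — any $\psi\in\CP_{01}(\Du(\A),\twiddle 2_{01})$ lifts along $\alpha$ to the $\CP_{01}$-morphism $\Du(\A)\to\fourDBTU$ that takes the value $10$ where $\psi=1$ and $01$ where $\psi=0$. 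Naturality is immediate: for $h\colon\A\to\B$ both $\eta_{\B}\circ\unbounded{\fnt{V}}(h)$ and $\Ku\Du(h)\circ\eta_{\A}$ send $[a]_{\theta_{\A}}$ to $y\mapsto\alpha(y(h(a)))$. Hence $\eta$ is a natural isomorphism.

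\emph{Part 2: $\unbounded{\fnt{W}}\simeq\Eu\Hu$.} Because $\Du\dashv\Eu$ and $\Hu\dashv\Ku$ are dual equivalences, $\Ku\Du$ and $\Eu\Hu$ are mutually quasi-inverse covariant equivalences between $\DBU$ and $\DU$; by Part~1 it therefore suffices to produce directly a natural isomorphism $\unbounded{\fnt{W}}\simeq\Eu\Hu$. For this one checks that $ij\mapsto(i,1-j)$ — the same formula realising $\fourU\cong\twoU\odot\twoU$ — is also a $\CP_{01}$-isomorphism $\fourDBTU\cong\twiddle 2_{01}\times\twiddle 2_{01}$. Hence for $\Lalg\in\DU$ the set $\Eu\Hu(\Lalg)=\CP_{01}(\Hu(\Lalg),\fourDBTU)$ is naturally in bijection with $\CP_{01}(\Hu(\Lalg),\twiddle 2_{01})\times\CP_{01}(\Hu(\Lalg),\twiddle 2_{01})=\Ku\Hu(\Lalg)\times\Ku\Hu(\Lalg)$; and since the bilattice operations on $\Eu\Hu(\Lalg)$ are computed pointwise from those of $\fourU$, which under $\fourU\cong\twoU\odot\twoU$ are exactly the $\odot$-operations, this bijection is an isomorphism of $\DBU$-algebras onto $\Ku\Hu(\Lalg)\odot\Ku\Hu(\Lalg)$. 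Composing with the natural evaluation isomorphism $\Ku\Hu(\Lalg)\cong\Lalg$ gives a natural isomorphism $\Eu\Hu(\Lalg)\cong\Lalg\odot\Lalg=\unbounded{\fnt{W}}(\Lalg)$, as required; as a by-product one reads off the explicit product representation $\A\cong(\A_t/\theta_{\A})\odot(\A_t/\theta_{\A})$, $a\mapsto([a]_{\theta_{\A}},[\neg a]_{\theta_{\A}})$, in parallel with Corollary~\ref{DBprodrep}.

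The conceptual inputs — the dual description $\theta_{\A}=\{(a,b)\mid a\land_t b=a\lor_k b\}$ and the good behaviour of $\alpha$ with respect to both reducts — are already available from the discussion preceding the theorem, so the remaining work is largely bookkeeping: checking that the candidate maps respect $\theta_{\A}$, proving the surjectivity of $\eta_{\A}$ via the lifting along $\alpha$, and keeping the naturality squares straight through the hom-functor identifications. The step I expect to demand the most care is the identification in Part~2 of the pointwise-$\fourU$ bilattice structure on $\Eu\Hu(\Lalg)$ with the $\odot$-structure on $\Ku\Hu(\Lalg)\times\Ku\Hu(\Lalg)$, since this rests on the compatibility of the isomorphism $\fourU\cong\twoU\odot\twoU$ both with the formation of powers and with the product decomposition $\fourDBTU\cong\twiddle 2_{01}\times\twiddle 2_{01}$ in $\CP_{01}$.
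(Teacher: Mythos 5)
Your proof is correct, and it reaches the theorem by a genuinely different route from the paper's. The paper derives the statement from its translation machinery: it first proves (Theorem~\ref{Theo:RevEngDBU}, via the functor of Theorem~\ref{Theo:RevEngu} and the full list of piggybacking relations in Proposition~\ref{DBUpigsub}) that $\Hu(\A_t)\cong\Du(\A)\coprod_{\CP_{01}}\Du(\A)^{\partial}$, reads off $\A_t\cong\Lalg\times\Lalg^{\partial}$ and $\A_k\cong\Lalg\times\Lalg$ for $\Lalg=\Ku\Du(\A)$, recovers $\neg$ from the dual description of operations, and only then identifies the kernel of the first projection with $\theta_{\A}$ --- using, as you do, that $\alpha$ is a $\DU$-homomorphism on both reducts. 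You bypass the pre-ordered space $Y_{\A}=\Du(\A)\times\Omega$ and its quotient entirely and argue at the level of hom-sets: $\unbounded{\fnt{V}}\simeq\Ku\Du$ comes from postcomposition with the $\CP_{01}$-morphism $\alpha\colon\fourDBTU\to\twiddle 2_{01}$ together with the key observation that $\alpha(u)=\alpha(v)$ if and only if $u\land_t v=u\lor_k v$ in $\fourU$ (which both makes $\theta_{\A}$ a kernel, hence a congruence, and gives injectivity via $\DBU=\ISP(\fourU)$), while $\unbounded{\fnt{W}}\simeq\Eu\Hu$ comes from the $\CP_{01}$-factorisation $\fourDBTU\cong\twiddle 2_{01}\times\twiddle 2_{01}$ matching the algebra isomorphism $\fourU\cong\twoU\odot\twoU$. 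Your route is more self-contained and makes the naturality squares essentially automatic, and in particular it supplies an explicit proof that $\Eu\Hu\simeq\unbounded{\fnt{W}}$, which the paper leaves largely implicit; what it does not deliver is the coproduct decomposition of $\Hu(\A_t)$ itself, which the paper wants in its own right as the dual form of the product representation. Both arguments rest on the same two black boxes: the strong duality of Theorem~\ref{DBUnatdual} and Priestley duality for $\DU$.
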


We have the following corollary;
cf.~\cite{RPhD,BJR11}.

\begin{coro} \label{Theo:prodrep-nobounds}  {\rm(Product 
Representation Theorem for unbounded distributive bilattices)}
Let $\A \in \DBU$.   Then there exists a distributive lattice $\Lalg$ such that $\A \cong \Lalg \odot \Lalg$.   Here the lattice $\Lalg$ may be 
 identified with the  quotient  $\A_i/\theta$, where~$\theta$ is the $\DU$-congruence given by 
$a\, \theta \, b$
if and only if $a\land_t b=a\lor_k b$.
\end{coro}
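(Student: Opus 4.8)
The plan is to obtain the corollary directly from Theorem~\ref{Theo:ProdFunDBU}, which already contains all the substance; what remains is the formal composition of equivalences. First I would recall that the hom-functors $\Du\colon\DBU\to\CP_{01}$ and $\Eu\colon\CP_{01}\to\DBU$ constitute a dual equivalence by Theorem~\ref{DBUnatdual}, and that $\Hu\colon\DU\to\CP_{01}$ and $\Ku\colon\CP_{01}\to\DU$ constitute the dual equivalence of Priestley duality for unbounded distributive lattices. Composing the natural isomorphisms $\Hu\Ku\simeq\fnt{Id}_{\CP_{01}}$ and $\Eu\Du\simeq\fnt{Id}_{\DBU}$ gives
\[
\Eu\Hu\Ku\Du\ \simeq\ \Eu\,(\Hu\Ku)\,\Du\ \simeq\ \Eu\Du\ \simeq\ \fnt{Id}_{\DBU},
\]
so that $\A\cong\Eu\Hu\Ku\Du(\A)$ for every $\A\in\DBU$.

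Next I would invoke Theorem~\ref{Theo:ProdFunDBU}: since $\unbounded{\fnt{V}}$ is naturally equivalent to $\Ku\Du$ and $\unbounded{\fnt{W}}$ is naturally equivalent to $\Eu\Hu$, the composite $\unbounded{\fnt{W}}\,\unbounded{\fnt{V}}$ is naturally equivalent to $\Eu\Hu\Ku\Du$, hence to $\fnt{Id}_{\DBU}$. Fixing $\A$ and unravelling the definitions of $\unbounded{\fnt{V}}$ and $\unbounded{\fnt{W}}$ in that theorem yields
\[
\A\ \cong\ \unbounded{\fnt{W}}\bigl(\unbounded{\fnt{V}}(\A)\bigr)\ =\ \unbounded{\fnt{W}}\bigl(\A_t/\theta_{\A}\bigr)\ =\ \bigl(\A_t/\theta_{\A}\bigr)\odot\bigl(\A_t/\theta_{\A}\bigr),
\]
where $\theta_{\A}=\{\,(a,b)\in\A^2\mid a\land_t b=a\lor_k b\,\}$. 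Setting $\Lalg=\A_t/\theta_{\A}$ then gives both parts of the corollary, once one checks that $\theta_{\A}$ really is a $\DU$-congruence on $\A_t$: it is the kernel of the $\DU$-homomorphism $\rho=\pi_1\circ h\colon\A_t\to\Lalg$, where $h\colon\A\to\Lalg\odot\Lalg$ is the isomorphism just produced and $\pi_1$ denotes projection onto the first factor.

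The only step requiring a genuine (if brief) computation --- and it is already carried out in the discussion preceding Theorem~\ref{Theo:ProdFunDBU} --- is the explicit description of this congruence, namely that $(a,b)\in\ker\rho$ precisely when $a\land_t b=a\lor_k b$. In terms of the $\odot$-construction this is immediate: writing $h(a)=(a_1,a_2)$ and $h(b)=(b_1,b_2)$, the identity $a\land_t b=a\lor_k b$ reads $(a_1\land b_1,\,a_2\lor b_2)=(a_1\lor b_1,\,a_2\lor b_2)$, which amounts to $a_1\land b_1=a_1\lor b_1$, i.e.\ $a_1=b_1$, i.e.\ $\rho(a)=\rho(b)$. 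Alternatively, as noted in the text, one exploits that $\alpha$ is at once a $\DU$-homomorphism $\A_t\to\twoU$ and $\A_k\to\twoU$, together with the isomorphism $Y_{\A}/_{\approx}\cong\Hu(\A_t)$ supplied by Theorem~\ref{Theo:RevEngDBU}. I do not expect any further obstacle: all the heavy lifting --- the natural duality of Theorem~\ref{DBUnatdual}, the reverse-engineering of $\Hu\Uu$ in Theorem~\ref{Theo:RevEngu}, the translation of $\neg$ and the bounds, and the assembly in Theorem~\ref{Theo:ProdFunDBU} --- has already been done.
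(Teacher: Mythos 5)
Your proposal is correct and follows essentially the same route as the paper: the corollary is stated there as an immediate consequence of Theorem~\ref{Theo:ProdFunDBU}, with the identification $(a,b)\in\ker\rho \Leftrightarrow a\land_t b=a\lor_k b$ carried out, exactly as you do, via the $\odot$-construction in the discussion preceding that theorem. Your explicit unravelling of $a_1\land b_1=a_1\lor b_1\Leftrightarrow a_1=b_1$ just fills in a detail the paper leaves to the reader.
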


\begin{figure}  [h]
\begin{center}
\begin{tikzpicture}[scale=.85,
auto,
 text depth=0.25ex,
 move up/.style=   {transform canvas={yshift=1.9pt}},
 move down/.style= {transform canvas={yshift=-1.9pt}},
 move left/.style= {transform canvas={xshift=-2.5pt}},
 move right/.style={transform canvas={xshift=2.5pt}}] 
\matrix[row sep= 1cm, column sep= 1.42cm]
{ 
\node (DB) {$\DB$}; & \node (P) {$\CP$};  & \node (D) {$\CCD$}; &
\node (DBU) {$\DBU$}; & \node (P01) {$\CP_{01}$};  & \node (DU) {$\DU$};\\ 
};
\draw  [->, move up] (DB) to node  [yshift=-2pt] {$\D$}(P);
\draw [<-,move down] (DB) to node [swap]  {$\E$}(P);
\draw  [->,move up] (P) to node  [yshift=-2pt] {$\fnt{K}$}(D);
\draw [<-,move down] (P) to node [swap]  {$\fnt{H}$}(D);
\draw  [->, move up] (DBU) to node  [yshift=-2pt] {$\Du$}(P01);
\draw [<-,move down] (DBU) to node [swap]  {$\Eu$}(P01);
\draw  [->,move up] (P01) to node  [yshift=-2pt] {$\Ku$}(DU);
\draw [<-,move down] (P01) to node [swap]  {$\Hu$}(DU);
\draw [->] (DB) .. controls +(1,1) and +(-1,1) .. node {$\fnt{V}$} (D);
\draw [<-] (DB) .. controls +(1,-1) and +(-1,-1) .. node [swap] {$\fnt{W}$} (D);
\draw [->] (DBU) .. controls +(1,1) and +(-1,1) .. node {$\unbounded{\fnt{V}}$} (DU);
\draw [<-] (DBU) .. controls +(1,-1) and +(-1,-1) .. node [swap] {$\unbounded{\fnt{W}}$} (DU);
\end{tikzpicture}
\end{center}
\caption{The categorical equivalences in Theorems~\ref{Theo:ProdFunDB} and~\ref{Theo:ProdFunDBU}}\label{fig:Equiv}
\end{figure}

Figure~\ref{fig:Equiv}
summarises the categorical equivalences and dual equivalences 
involved in our approach, for both the bounded and unbounded cases.
As noted in the introduction,
our 
 approach leads directly to  categorical dualities,
 without the need to verify explicitly that the 
constructions are  functorial:  compare our presentation with that in  \cite[pp.~117--120]{MPSV}  and  note also the work carried out to set up  categorical equivalences on the algebra side in \cite[Section~5]{BJR11}.

\section{Applications of the natural dualities for $\DB$ and $\DBU$}\label{Sec:Applications}

In this section we demonstrate how the natural dualities we have 
developed 
so far  
lead easily to answers to questions of a categorical 
nature concerning $\DB$ and $\DBU$.
Using the categorical equivalence between $\DB$ and $\CCD$, and that between $\DBU$ and $\DU$, it is possible directly  to translate  certain concepts from one context to another. 
We shall concentrate on~$\DB$.  Analogous results can be obtained
for~$\DBU$ and we mention these explicitly only where this seems
warranted. 
We shall describe the following, in more or less detail: 
limits and colimits; 
 free algebras;  and  projective and injective objects.
These 
 topics are very traditional, 
and our aim is simply to show how our viewpoint allows
descriptions to be obtained, with the aid of duality, from
corresponding  descriptions in the context of  distributive lattices.  
The results we obtain here are new, but
unsurprising.
We shall also venture into 
territory less explored by duality methods and 
 consider    
 unification type,  and also admissible quasi-equations and clauses; here
substantially more work is involved.
It will be important for certain of the applications that we  
are dealing with  strong, rather than merely  full, dualities.
Specifically we shall make use of the fact that if functors $\D\colon \CA\to \CX$ and $\E\colon \CX \to \CA$ set up a strong duality then surjections (injections) in $\CA$
correspond to embeddings (surjections)  in $\CX$; see 
\cite[Lemma~3.2.6]{CD98}. 
On a technical point,  we note that we always assume that an algebra has  a non-empty universe.

\subsection*{Limits and colimits} \

Since $\DB$ is a variety, 
the forgetful functor into the category   $\cat{SET}$ of sets has a left adjoint. As a consequence all 
 limits in $\DB$ 
are calculated as in  $\cat{SET}$ (see \cite[Section~V.5]{McL}), and this renders them fairly easy to handle, with products being cartesian products and equalisers being calculated in $\cat{SET}$. 
(We refer the reader to \cite[Section~V.2]{McL} where the procedure to construct arbitrary limits from products and equalisers is fully explained.)

 The calculation of colimits is  more involved. 
The categorical equivalence between~$\DB$ and~$\CCD$ implies  that 
 if   
$\fnt{S}$  
is a diagram in $\DB$ then 
\[
\mathrm{Colim\,} 
\fnt{S}
\cong\E\fnt{H}\bigl(\mathrm{Colim\,}\fnt{KDS}
\bigr)\cong \fnt{W}\bigl(\mathrm{Colim\,}\fnt{VS}
\bigr).
\]
This observation transfers the 
problem from one category to the other, but does not by itself solve it.  
However we can  then 
use the natural duality derived  in
Theorem~\ref{DBnatdual}  
  in particular to compute  finite colimits. 
We  rely on the fact that 
colimits in $\DB$ 
correspond to limits in  $\CP$.  Such limits  
are easily calculated, since 
cartesian products and equalisers of Priestley spaces  are  again in $\CP$.
(Corresponding statements hold 
 for $\DBU$ and $\CP_{01}$~\cite[Section 1.4]{CD98}.)

Congruences can be seen as particular cases of colimits, specifically as co-equalisers.
 This implies, on the one hand, that the congruences of an algebra in $\DB$ or in $\DBU$ are in one-to-one correspondence with those substructures of its natural dual  that arise as equalisers. 
Since~$\DB$ is a variety and 
Theorem~\ref{DBnatdual} supplies a strong duality,
the lattice of congruences of an algebra~$\A$ in $\DB$ 
is dually isomorphic to the lattice of closed substructures of its dual space (see \cite[Theorem III.2.1]{CD98}). 
Simultaneously, the lattice of congruences of  ${\A\in\DB}$ 
is  isomorphic to the lattice of congruences of 
$\fnt{KD}(\A)\in\CCD$.
Likewise, from Theorem~\ref{DBUnatdual}, for each ${\A\in\DBU}$ the congruence  lattice of 
$\A$ is isomorphic to the congruence lattice of 
 $\Ku\Du(\A)\in\DU$. 
The latter  result was  
proved for interlaced bilattices in \cite[Chapter II]{RPhD} using the product representation.

 \subsection*{Free algebras}\

A natural duality
gives 
 direct access to a description of free objects:  
If an alter ego $\MT$ yields a duality on $\CA = \ISP(\M)$, then the 
power $\MT^\lambda $ is the natural
dual of the free algebra in $\CA$ on~$\lambda$ generators 
 (see \cite[Corollary II.2.4]{CD98}).
We immediately obtain
$     
\mathbf{F}_{\DB}(\lambda)\cong \E_{\DB}\bigl(\fourDBT^{\lambda}\bigr) 
$
where $\lambda$ is a cardinal and $\mathbf{F}_{\DB}(\lambda)$ 
denotes the free algebra on $\lambda$
 generators in $\DB$;  
the free generators correspond to the projection maps.

Because 
$\fourDBT=\twoT^2$, we have 
$\fnt{KD}(\mathbf{F}_{\DB}(\lambda))\cong\mathbf{F}_{\CCD}(2\lambda)$. 
Therefore 
\[\mathbf{F}_{\DB}(\lambda)\cong\fnt{EH}(\mathbf{F}_{\CCD}(2\lambda))\cong \mathbf{F}_{\CCD}(2\lambda)\odot\mathbf{F}_{\CCD}(2\lambda).\]
Hence 
$\mathbf{F}_{\CCD}(2\lambda)\odot\mathbf{F}_{\CCD}(2\lambda)$ is the free bounded distributive bilattice on~$\lambda$  generators,  the free generators being  the pairs $(x_{2i-1},x_{2i})$ where $\{\,x_i\mid i\in 2\lambda\,\}$ is the set of free generators of $\mathbf{F}_{\CCD}(2\lambda)$.
Analogous results hold for 
$\DBU$.

\subsection*{Injective and projective objects}\ 

Injective, projective and weakly projective objects in $\CCD$ have been described 
 (see 
\cite{BD} and the  references therein;  the definitions are given in Chapter I and results in  Sections~V.9 and  V.10). 
The notions of injective and projective object are preserved under categorical equivalences. 
For categories which 
are classes of algebras with homomorphisms as the morphisms,
weak 
projectives are also 
preserved under
categorical equivalences. 
A distributive lattice $\Lalg$ (with bounds)  is injective
in $\CCD$  (and in $\DBU$ too) if and only 
it is complete
and each element of $\Lalg$ is complemented  (see \cite[SectionV.9]{BD}). This implies  that  a  distributive bilattice 
$\A$ is injective 
in $\DB$ 
if and only if $\A_k$ 
is complete (or equivalently $\A_t$ is complete) and each element of $\A$ is 
complemented in 
$\A_k$ (or equivalently $\A_t$ is complete and each element of $\A$ is 
complemented in 
$\A_t$). 
Moreover, since~$\CCD$
has 
enough injectives,  
the same is true of  $\DB$.  Corresponding statements can be made for
$\DBU$. 

The algebra 
 $\two$ is the only projective of $\CCD$  \cite[Section V.10]{BD}.
Hence 
$\four$ is the only projective in $\DB$. The general description of 
weak
projectives in $\CCD$  
is rather involved (see \cite[Section V.10]{BD}). But in the case of finite algebras there is a simple dual characterisation: 
a 
finite
bounded distributive lattice is weakly projective in 
$\CCD$ if 
 and only if its dual space is a lattice. This 
translates to bilattices:
a finite 
 distributive bilattice is weakly projective in 
$\DB$
 if and only if its natural dual is a lattice, or equivalently if the family of
 homomorphisms into
$\four$, ordered pointwise by~$\leq_k$, forms  a lattice.
In the unbounded case we note that $\DBU$ has no projectives since
$\DU$ has none, and that  a finite member~$\A$ of $\DBU$ is weakly projective if and only if $\Du(\A)$ is a lattice.

\subsection*{Unification type} \

The notion of \defn{unification} was 
introduced by Robinson in \cite{Ro1965}. Loosely, (syntactic) unification 
 is the process of finding substitutions that equalise pairs of terms. When considering equivalence under an equational theory instead of equality the notion of unification evolves to 
encompass 
the concept of \defn{equational unification}. 
We refer the reader to  \cite{BS2001} for the general definitions and background theory of 
 unification. To study the unification type of bilattices we shall use 
 the notion 
of 
algebraic unification developed by Ghilardi in \cite{Gh97}. 

Let $\A$ be a finitely presented algebra in a quasivariety $\CA$. 
A \defn{unifier} for $\A$ in $\CA$ is a homomorphism $u\colon\A\to\alg{P}$, where $\alg{P}$ is a finitely generated weakly projective  algebra in $\CA$.
 (In \cite{Gh97} weakly projective algebras are called regular projective or simply projective.)  
An algebra $\A$ is said to be  \defn{solvable}  in $\CA$ if there exists at least one unifier for it. 
Let $u_i \colon \A \to \alg{P}_i$ for $i\in\{1,2\}$ be unifiers for $\A$ in $\CA$.
Then $u_1$ is \defn{more genera}l than~$u_2$, in symbols, $u_2 \leq u_1$,
if there exists a homomorphism $f \colon \mathbf{P}_1 \to \mathbf{P}_2$
such that $f \circ u_1=u_2$. 
A unifier $u$ for $\A$ is said  to be a \defn{most general} unifier (an mg-unifier) 
of   
$\A$ in $\CA$ if  $u \leq u'$ implies $u'\leq u$.
For $\A$ solvable in $\CA$ the \defn{type} of $\A$ is defined as follows:
\begin{longnewlist}
\item[\emph{nullary\!}] if there exists $u$, 
a  unifier of $\A$, such that $u\not\leq v$ for each mg-unifier of $\A$ (in symbols, $\mathrm{type}_{\CA}(\A)=0$);
\item[\emph{unitary\!}] if  there exists  a 
 unifier $u$ of $\A$ such that $v\leq u$ for each  unifier $v$ of~$\A$  ($\mathrm{type}_{\CA}(\A)=1$);
\item[\emph{finitary\!}] if 
there exists a finite set $U$ of  mg-unifiers of $\A$ such that for each unifier $v$  of $\A$ there exists $u\in U$ with $v\leq u$, and  for each $v$  of $\A$ there exists $w$ unifier of $\A$ with $w\not\leq v$ ($\mathrm{type}_{\CA}(\A)=\w$); and 
\item[\emph{\ infinitary\!\!}] otherwise ($\mathrm{type}_{\CA}(\A)=\infty$).
\end{longnewlist}
In \cite{BC}, an algorithm to classify finitely presented bounded distributive lattices by their unification type was presented. 
Since the unification type of an algebra is a categorical invariant (see \cite{Gh97}), the results in  \cite{BC} can be combined with the equivalence
 between $\DB$ and $\CCD$  to 
investigate  the unification types of 
finite distributive bilattices. 
 
Moreover, since the results in \cite{BC} were obtained using Priestley duality for~$\CCD$, we can directly translate the results to bilattices and their natural duals.
This yields 
the following characterisation.
Let $\A$ be a finitely presented (equivalently, finite) bounded distributive bilattice. Then $\A$ is solvable in $\DB$ if and only if it is non-trivial and 
\[
\mathrm{type}_{\DB}(\A)=\begin{cases}
1&\mbox{ if }\D_{\DB}(\A)\mbox{ is a lattice, i.e., 
 if }\A\mbox{ is weakly projective,}\\
\w&\mbox{ if }\D_{\DB}(\A)\mbox{ is not a lattice and}\\
&\mbox{ \hspace*{.3cm} for each } x,y\in\D_{\DB}(\A)\mbox{ the interval } [x,y]\mbox{ is a lattice,}\\
0&\mbox{ otherwise.}
\end{cases}
\]

In \cite{BC} the 
corresponding theory for
 unbounded distributive lattices was not developed. 
With minor modifications to the proofs presented there, it is easy to  extend the results
 to $\DU$. Its translation to $\DBU$ is as follows.
Each finite algebra $\A$ in $\DBU$ is solvable and
\[
\mathrm{type}_{\DBU}(\A
)=\begin{cases}
1&\mbox{ if }\D_{\DBU}(\A)\mbox{ is a lattice, i.e., 
 if }\A\mbox{ is weakly projective,}\\
0&\mbox{ otherwise.}
\end{cases}
\]

\subsection*{Admissibility}\

The concept of admissibility was  
introduced by Lorenzen for intuitionistic logic \cite{Lo1955}. Informally, 
a rule is admissible in a logic if 
when 
the rule is 
added to the system it does not modify the notion of theoremhood. The study of admissible rules for logics that admit an algebraic semantic
has led to the investigation 
of admissible rules for equational logics of classes of algebras. For  background 
on admissibility  we refer the reader to \cite{Ry97}. 

A \defn{clause} in an algebraic language $\mathcal L$ is an ordered pair of finite sets of $\mathcal L$-identities, written $(\Sigma, \Delta)$. 
Such a  
clause is called 
a \defn{quasi-identity} if $\Delta$ contains only one identity. 
Let $\CA$ be a quasivariety of algebras with language  $\mathcal L$. We say that the $\mathcal L$-clause $(\Sigma,\Delta)$ is \defn{valid }in $\CA$ (in symbols $\Sigma\vDash_{\CA}\Delta$) if for 
every $\A \in \CA$ and homomorphism $h \colon {\alg{Term_{\mathcal L}}} \to \A$,
we have that 
$\Sigma \subseteq \ker h$ implies 
$\Delta \cap \ker h \not = \emptyset$, where ${\alg{Term_{\mathcal L}}}$ denotes the term (or absolutely free) algebra for~$\mathcal L$ over 
countably 
many variables (we are assuming that $\Sigma\cup\Delta\subseteq {\alg{Term_{\mathcal L}}}^2$). 
For simplicity we shall work with  the following equivalent definition
of admissible clause:
the clause $(\Sigma,\Delta)$ is called 
\emph{admissible in~$\CA$} if it is valid in the free $\CA$-algebra on countably many  generators,  ${\alg{F}_{\CA}}(\aleph_0)$.

Let $\CA$ be a quasivariety. If a set of quasi-identities $\Lambda$ is such that $\A\in\CA$ 
belongs to 
the quasivariety generated by ${\alg{F}_{\CA}}(\aleph_0)$
if and only if $\A$ satisfies the  quasi-identities in  $\Lambda$,  then   $\Lambda$ is called a \defn{basis for 
the admissible quasi-identities of $\CA$}. 
Similarly, 
$\Lambda$ is called a \defn{basis for the admissible clauses of  $\CA$} if $\A$ satisfies the  clauses in  $\Lambda$ if and only if $\A$ is in the universal class generated $\alg{F}_{\CA}(\aleph_0)$, that is, $\A$ satisfies the same clauses 
as $\alg{F}_{\CA}(\aleph_0)$ does.

In the case of a locally finite quasivariety,  checking that a set of clauses or quasi-identities is a basis can be restricted to finite algebras.

\begin{lem}
\normalfont{\cite{CM}}  \label{Lem:CM} 
Let $\CA$ be a locally finite quasivariety and let $\Lambda$ be  a set of clauses in the language of $\CA$. 
\begin{newlist}
\item[{\rm (i)}] 
The following statements are equivalent:
\begin{newlist}
\item[{\rm (a)}]	 for each finite $\A \in \CA$
it is the case that 
 $\A \in \ope{IS}(\alg{F}_{\CA}(\aleph_0))$ if and only if  
$\A$ satisfies~$\Lambda$;
\item[{\rm (b)}] 	$\Lambda$ is a basis for the admissible clauses of $\CA$.
\end{newlist}		
\item[{\rm (ii)}] 
If the set $\Lambda$ consists of quasi-identities, then the following 
statements
are equivalent:
\begin{newlist}
\item[{\rm (a)}]	 for each finite $\A \in \CA$ it is the case that $\A \in \ISP(\alg{F}_\CA(\aleph_0))$ if and only if  $\A$ satisfies~$\Lambda$;
\item[{\rm (b)}] 	$\Lambda$ is a basis for the admissible quasi-identities of~$\CA$.
\end{newlist}
\end{newlist}	
\end{lem}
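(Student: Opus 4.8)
The plan is to reduce both equivalences to a single structural principle about locally finite quasivarieties. Write $\alg{F}=\alg{F}_{\CA}(\aleph_0)$ and let $\mathcal{M}=\{\,\A\in\CA\mid\A\models\Lambda\,\}$. By the definitions recalled just above, the admissible clauses of $\CA$ are exactly the clauses valid in $\alg{F}$, so ``$\Lambda$ is a basis for the admissible clauses of $\CA$'' says precisely that $\mathcal{M}$ equals the universal class generated by $\alg{F}$; likewise, when $\Lambda$ consists of quasi-identities, ``$\Lambda$ is a basis for the admissible quasi-identities of $\CA$'' says that $\mathcal{M}$ equals the quasivariety generated by $\alg{F}$. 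Since clauses are universal sentences, $\mathcal{M}$ is a universal subclass of $\CA$ in the first case and a sub-quasivariety of $\CA$ in the second; either way it is closed under subalgebras and under directed unions. So in each part the task is to show that this equality of classes is equivalent to the asserted agreement on finite algebras.

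First I would establish the determination principle: in a locally finite quasivariety a universal subclass, and a fortiori a sub-quasivariety, is recovered from its finite members. Indeed, every $\A\in\CA$ is the directed union of its finitely generated subalgebras, each of which is finite by local finiteness and lies in the given subclass whenever $\A$ does (closure under $\ope{S}$); and universal subclasses of $\CA$ are closed under such directed unions. Consequently two universal (resp.\ quasi-equational) subclasses of $\CA$ coincide if and only if they have the same finite members. Applying this to $\mathcal{M}$ and to the universal class (resp.\ quasivariety) generated by $\alg{F}$, statements (i)(b) and (ii)(b) reduce to the assertion that the finite members of $\mathcal{M}$ --- that is, the finite algebras in $\CA$ satisfying $\Lambda$ --- coincide with the finite members of the class generated by $\alg{F}$. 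So it remains to check that the finite members of the universal class generated by $\alg{F}$ are exactly the finite $\A\in\CA$ with $\A\in\ope{IS}(\alg{F})$, and that the finite members of the quasivariety generated by $\alg{F}$ are exactly the finite $\A\in\CA$ with $\A\in\ISP(\alg{F})$.

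This identification of finite members is the technical heart of the argument; one inclusion is trivial in each case. For the converse in the universal case, a finite algebra $\A$ can be presented by finitely many generators and finitely many relations (here local finiteness is used, via finiteness of the relevant finitely generated free algebra), so that ``$\A$ embeds into a structure $\B$'' is captured by a single existential first-order sentence $\theta_{\A}$; at this point one uses that a bijective homomorphism of algebras is automatically an isomorphism. If a finite $\A$ in the universal class of $\alg{F}$ did not embed into $\alg{F}$, then $\neg\theta_{\A}$ would be a universal sentence valid in $\alg{F}$, hence valid in $\A$, which contradicts $\A\models\theta_{\A}$; so $\A\in\ope{IS}(\alg{F})$. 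For the quasivariety case, if some pair $a\neq b$ of a finite $\A$ in the quasivariety of $\alg{F}$ were separated by no homomorphism $\A\to\alg{F}$, then reading a suitable implication off the presentation of $\A$ produces a quasi-identity valid in $\alg{F}$ but refuted in $\A$ under the assignment sending the free generators to the chosen generators, a contradiction; hence every such pair is separated and $\A$ embeds into a power of $\alg{F}$, so $\A\in\ISP(\alg{F})$. Feeding these facts back into the previous step gives (i)(a)$\,\Leftrightarrow\,$(i)(b) and (ii)(a)$\,\Leftrightarrow\,$(ii)(b).

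I expect the main obstacle to be precisely this last identification in the quasi-identity case: one has to manufacture from a failure of separation an explicit quasi-identity that is valid in $\alg{F}$ yet refuted in $\A$, and this is where the finite presentability of $\A$ supplied by local finiteness is genuinely needed. Everything else --- preservation of universal sentences under subalgebras and directed unions, and the reduction of all questions to finite algebras --- is routine once local finiteness is in play, and the two parts of the lemma run in parallel, differing only in that clauses axiomatise universal classes whereas quasi-identities axiomatise quasivarieties.
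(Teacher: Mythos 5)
The paper gives no proof of this lemma at all --- it is quoted from \cite{CM} --- so there is nothing internal to compare against; judged on its own terms, your argument is correct and is essentially the standard one for results of this kind. You isolate exactly the two ingredients needed: first, that in a locally finite quasivariety two subclasses closed under $\ope{IS}$ and under directed unions coincide as soon as they agree on finite members (every algebra being the directed union of its finitely generated, hence finite, subalgebras); and second, that a finite $\A\in\CA$ lies in the universal class (resp.\ quasivariety) generated by $\alg{F}_{\CA}(\aleph_0)$ if and only if it lies in $\ope{IS}(\alg{F}_{\CA}(\aleph_0))$ (resp.\ $\ISP(\alg{F}_{\CA}(\aleph_0))$), which you correctly obtain from the diagram sentence $\theta_{\A}$ --- whose negation is itself a single clause in the paper's sense, and whose antecedent-only variant yields the separating quasi-identities --- noting that local finiteness is what guarantees a finite presentation of $\A$ over $\alg{F}_{\CA}(n)$ even when the similarity type is not assumed finite.
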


In \cite{CM},
using this lemma and the appropriate natural dualities,   bases for admissible quasi-identities and clauses were presented for  various classes of algebras---bounded distributive lattices, Stone algebras and De Morgan algebras, among others. Here we follow the same strategy using the dualities for $\DB$ and $\DBU$ developed in Sections~\ref{sec:DB} and~\ref{sec:DBU}.
\begin{lem}\label{Lem:AdmDB}
Let $\A$ be a finite 
distributive bilattice. 
\begin{newlist}
\item[{\rm (i)}] $\A\in \ISP(\alg{F}_{\DB}(\aleph_0))$.
\item[{\rm (ii)}] The following statements are equivalent:
\begin{newlist}
\item[{\rm (a)}] $\A\in \ope{IS}(\alg{F}_{\DB}(\aleph_0))$;
\item[{\rm (b)}] $\D_{\DB}(\A)$ is a non-empty bounded poset;
\item[{\rm (c)}] $\A$ satisfies the following clauses:
  \begin{newlist}
\item[{\rm (1)}] $  ( \{ x\land_k y\approx 1_t \}, \{x\approx 1_t,\ y\approx1_t\} )$,
\item[{\rm (2)}] $ ( \{ x\lor_k y\approx 1_t\}, \{x\approx 1_t, \ y\approx 1_t\} )$,
\item[{\rm (3)}] $ ( \{0_t=1_t\}, \emptyset )$.
  \end{newlist}
\end{newlist}
\end{newlist}
\end{lem}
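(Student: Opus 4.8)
The plan is to use the natural duality for $\DB$ from Theorem~\ref{DBnatdual} throughout, translating the statements about subalgebras of free algebras into statements about closed substructures of powers of the alter ego $\fourDBT$. Recall that $\fourDBT = (\{00,11,01,10\};\leq_k,\Tp)$ is just $\twoT^2$, a Priestley space; since the duality is strong, $\alg{F}_{\DB}(\aleph_0)$ has natural dual $\fourDBT^{\aleph_0}$, and an embedding $h\colon\A\hookrightarrow\alg{F}_{\DB}(\aleph_0)$ corresponds to a continuous surjection $\D_{\DB}(h)\colon\fourDBT^{\aleph_0}\twoheadrightarrow\D_{\DB}(\A)$ in $\CP$. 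So (i) and the equivalence (a)$\Leftrightarrow$(b) in (ii) reduce to: which finite Priestley spaces $X$ admit a continuous surjection from $\fourDBT^{\aleph_0}$ (equivalently from a finite power $\fourDBT^{n}$, by finiteness of $X$ and standard compactness arguments)?

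For (i), I would observe that $\fourDBT$, being $\twoT^2 = (\{0,1\}^2;\leq)$ where $\leq$ is the coordinatewise order, is itself the four-element ``square'' poset; more usefully, $\D_{\DB}(\A)$ for any finite $\A\in\DB$ is, by Theorem~\ref{DBnatdual}, an arbitrary finite poset (every finite poset arises as a natural dual, since $\E$ is essentially surjective on finite posets). Any finite poset $P$ is the image of some finite antichain-free construction, but more directly: $\fourDBT^{n}$ surjects onto every finite poset with at most $2^n$ elements that embeds as a retract — rather, I would use the fact that $\twoT^{n}$ surjects onto every $n$-element poset and hence $\fourDBT^{n}=\twoT^{2n}$ surjects onto every poset of size $\le 2n$, giving $\D_{\DB}(\A)\in\IScP(\fourDBT)$ as a quotient of a power. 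Dualising, $\A\in\ISP(\alg{F}_{\DB}(\aleph_0))$; this is the content of (i). (Essentially (i) says $\alg{F}_{\DB}(\aleph_0)$ generates $\DB$ as a quasivariety, which is automatic from $\DB=\ISP(\four)$ together with $\four$ being a quotient of a free algebra.)

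For (ii)(a)$\Leftrightarrow$(b): a continuous \emph{surjection} from $\fourDBT^{\aleph_0}$ onto finite $X$ exists iff $X$ is a bounded poset. The forward direction: $\fourDBT$ has a top element $10$ and a bottom element $01$ in the $\leq_k$ order (visible from Figure~\ref{strings}), so $\fourDBT^{\aleph_0}$ is bounded, and a continuous surjection preserves top and bottom, whence $X=\D_{\DB}(\A)$ is bounded and non-empty. The reverse direction: if $X$ is a non-empty bounded finite poset with $|X|=m$, choose a surjection $\twoT^{k}\twoheadrightarrow X$ for suitable $k$ that respects the bounds (available by Priestley duality since $X$ bounded corresponds to $\fnt{K}(X)$ having the right reduct structure — or constructed directly by sending the all-$1$ and all-$0$ tuples to the bounds), then compose with the quotient $\fourDBT^{k}=\twoT^{2k}\twoheadrightarrow\twoT^{k}$; dualising gives the embedding $\A\hookrightarrow\alg{F}_{\DB}(\aleph_0)$.

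For (ii)(b)$\Leftrightarrow$(c), I would argue on the algebraic side using the product representation (Corollary~\ref{DBprodrep}): write $\A\cong\Lalg\odot\Lalg$ with $\Lalg=\fnt{KD}(\A)$, so $1_t=(1,0)$, $0_t=(0,1)$, and $\land_k$, $\lor_k$ are coordinatewise meet and join. Then $x\land_k y = 1_t=(1,0)$ forces both coordinates: this says $x=y=1_t$ only if $\Lalg$ is directly indecomposable with no proper nontrivial factor structure — more precisely, clauses (1)–(2) assert that $1_t$ is \emph{join-irreducible and meet-irreducible} (completely prime, in the finite setting) in $\A_k$, which dualises exactly to $\D_{\DB}(\A)$ having a bottom and a top element, since an element being completely meet-prime in $\A_k$ corresponds under Priestley duality to a maximal/minimal point of the dual poset. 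Clause (3) just rules out the trivial algebra, matching ``non-empty''. The main obstacle I anticipate is getting the dual translation of clauses (1) and (2) exactly right: I need to verify that validity of $(\{x\land_k y\approx 1_t\},\{x\approx 1_t,\,y\approx 1_t\})$ in a finite $\A$ is equivalent to the principal $k$-filter generated by $1_t$ being prime, and then match this — via Theorem~\ref{DBnatdual} and the description of $\D_{\DB}(\A)$ as a poset ordered by $\leq_k$ — with $\D_{\DB}(\A)$ possessing a greatest element; the analogous argument with $\lor_k$ handles the least element. This is a finite, concrete check but requires care about which order ($\leq_k$ versus its dual) the top of the dual corresponds to.
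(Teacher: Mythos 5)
Your argument for part (i) breaks down. The claim that $\twoT^{n}$ admits a continuous order-preserving surjection onto \emph{every} $n$-element poset is false: an order-preserving map defined on a bounded poset sends every point of its image between the images of the bounds, so the finite posets arising as such images of $\twoT^{n}$ (or of $\fourDBT^{\aleph_0}$) are exactly the non-empty \emph{bounded} ones. If your claim were correct, every finite non-trivial algebra in $\DB$ would lie in $\ope{IS}(\alg{F}_{\DB}(\aleph_0))$, contradicting part (ii) of the very lemma you are proving (e.g.\ $\D_{\DB}(\four\times\four)$ is a two-element antichain). Your fallback, that (i) is automatic because $\four$ is a \emph{quotient} of a free algebra, also fails: quotients do not in general lie in $\ISP$ of the free algebra. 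The observation that does the job, and the one the paper uses, is that every element of $\four$ is named by a nullary operation, so $\four$ embeds into every non-trivial member of $\DB$, in particular into $\alg{F}_{\DB}(\aleph_0)$; hence $\DB=\ISP(\four)\subseteq\ISP(\alg{F}_{\DB}(\aleph_0))$. (Dually, $\D_{\DB}(\A)$ is an image of a \emph{disjoint union} of copies of $\fourDBT^{\aleph_0}$, not of a single power, which is why (i) does not collide with (ii).)

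The rest is closer to the mark but still incomplete. Your treatment of (ii)(a)$\Leftrightarrow$(b) is essentially the paper's argument, except that you only gesture at the required order-preserving surjection $\fourDBT^{n}\to\D_{\DB}(\A)$; the paper writes it down explicitly (the all-bottom tuple to the bottom, tuples with exactly one non-bottom coordinate $i$ to $x_i$, everything else to the top), and its order-preservation genuinely uses the existence of both bounds. For (b)$\Leftrightarrow$(c) you propose a different route, through the product representation and meet-/join-primeness of $1_t$ in $\A_k$, but you explicitly leave the decisive verification --- which clause corresponds to which bound, and why primeness of the relevant filter transfers to a bound of $\D_{\DB}(\A)$ under the pointwise $\leq_k$ order --- as an anticipated obstacle rather than resolving it, so this equivalence is not yet proved. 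For comparison, the paper argues directly with homomorphisms into $\four$: the top element $\mathbf{t}$ of $\D_{\DB}(\A)$ forces clause (1) by evaluating at $a$ and $b$; conversely clause (2) makes $F=\{\,c\mid 1_t\leq_k c\,\}$ a prime $k$-filter, hence a prime $t$-filter, whose characteristic function yields a homomorphism $\A\to\four$ that is the bottom element of $\D_{\DB}(\A)$, and clause (1) dually supplies the top. Your reformulation would work once that bookkeeping is carried out, but as written the gap you flag is real.
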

\begin{proof}
To prove (i) it is enough to observe that $\four$ is a subalgebra of any non-trivial algebra in~$\DB$, and therefore  $\DB=\ISP(\four)\subseteq\ISP(\alg{F}_{\DB}(\aleph_0))\subseteq\DB$.

To prove (ii)(a)$\Rightarrow$(ii)(b), let $h\colon\A\to \alg{F}_{\DB}(\aleph_0)$ be an injective homomorphism. Then the map
 $
\D_{\DB}(h)\colon \D_{\DB}(\alg{F}_{\DB}(\aleph_0))\to \D_{\DB}(\A)
$
 is an order-preserving continuous and onto $\D_{\DB}(\A)$. Since  $ \D_{\DB}(\alg{F}_{\DB}(\aleph_0))\cong \fourDBT^{\aleph_0}$ is bounded and non-empty,  
so is $\D_{\DB}(\A)$.

We next  prove the converse, namely   (ii)(b) $\Rightarrow$ (ii)(a). Let $\mathbf{t},\mathbf{b}\colon\A\to\four$ be the top and bottom elements of $\D_{\DB}(\A)$ and let $\{\mathbf{t},\mathbf{b},x_1,\ldots, x_n\}$ be an enumeration of the elements of the finite set $\D_{\DB}(\A)$. Let $\spc{P}=\fourDBT^{n}$, then $\E_{\DB}(\spc{P})$ is the free bounded distributive bilattice on $n$ 
generators. Then $\E_{\DB}(\spc{P})$ belongs to $\ope{IS}(\alg{F}_{\DB}(\aleph_0))$. Now  define 
$f\colon \spc{P}\to \D_{\DB}(\A)$ 
by 
\[
f(c_1,\ldots,c_{n})=\begin{cases}
	\mathbf{b} & \mbox{ if }c_i=0_k\mbox{ for each }i\in\{1,\ldots,n\},\\
	x_i & \mbox{ if }c_i\neq0_k,\mbox{ and } c_j=0_k\mbox{ for each }j\in\{1,\ldots,n\}\setminus\{i\}, \\
	\mathbf{t} & \mbox{ otherwise.}\\
\end{cases}
\]
It is easy to check that $f$ is order-preserving and maps  $P$ onto 
$\D_{\DB}(\A)$. Since the natural duality of 
Theorem~\ref{DBnatdual} is strong,  the dual homomorphism $\E_{\DB}(f)\colon\fnt{ED}(\A)\to \E_{\DB}(\spc{P})$ is injective. 
Hence  
\[\A\cong\fnt{ED}(\A)\in\ope{IS}(\E_{\DB}(\spc{P}))\subseteq\ope{IS}(\alg{F}_{\DB}(\aleph_0)).\]
 
We now prove  (ii)(b) $\Rightarrow$ (ii)(c). Let $\mathbf{t}\colon\A\to\four$ be the top element of $\D_{\DB}(\A)$ and assume that $a,b\in\A$ are such that $a\land_k b=1_t$. If we assume that $a\neq 1_t \neq b$ then  there exist $h_1,h_2\colon \A\to \four$ such that $1_t<_k h_1(a)$ and $1_t<_k h_2(b)$. Since the order in $\D_{\DB}(\A)$ is determined pointwise by $\leq_k$, we then have  $1_t <_k  \mathbf{t}(a),\mathbf{t}(b)$. Then $\mathbf{t}(a)=\mathbf{t}(b)=1_k$ and $\mathbf{t}(a\land_k b)=1_k\neq 1_t$, a contradiction. Then $a= 1_t$ or $b=1_t$.  A similar argument proves that  $\D_{\DB}(\A)$ having a lower bound implies that clause (2) is valid in $\A$.
If $\A\in\DB$ is such that $0_t=1_t$ then $\A$ is trivial and $\D_{\DB}(\A)$ is empty. This proves that clause~(3) is valid in any  algebra $\A$ whose natural dual $\D_{\DB}(\A)$ is non-empty.
  
Finally we prove  (ii)(c) $\Rightarrow$ (ii)(b). Let $F=\{\,c\in\A\mid 1_t\leq_k
 c\,\}$.
 By clause~(3), $\A$ is non-trivial,  so  
$0_t\notin F$. By clause (2), $F$ is a prime $k$-filter and it contains~$1_t$. 
Thus  
it is 
a prime $t$-filter,
as observed at the end of Section~\ref{sec:DBilat}.

Let $x\colon\A\to\two$ be the characteristic 
function 
of~$F$. Then the map $f\colon\A\to \four$ defined for each $a\in A$ by $f(a)=x(a)(1-x(\neg a))$ 
is a well-defined bilattice homomorphism, as observed after Theorem~\ref{sep-prop-bdd}. We shall prove that $f$ is the bottom element of $\D_{\DB}(\A)$. Let $h\in\D_{\DB}(\A)$ and $a\in A$. If
$a\in F$ and $\neg a \notin F$, since $1_t\leq_k a$, then $f(a)=1_t\leq_k h(a)$.  
If $a,\neg a\in F$, then $1_t\leq_k h(a),\neg h(a)$. Then $h(a)=1_k=f(a)$. 
The other two cases follow by a similar argument, since $1_t\leq_k a,\neg a$.  Then $f(a)\leq_k h(a)$ for each $a\in A$. This proves that $f\leq h$ in $\D_{\DB}(\A)$.

By a similar argument  
the validity of clause (1) implies that $\D_{\DB}(\A)$ is upper-bounded. 
\end{proof}
Combining Lemmas~\ref{Lem:CM} and~\ref{Lem:AdmDB} we obtain the following theorem.
\begin{thm}
Every admissible quasi-equation in $\DB$ is also valid in $\DB$. 
Moreover the following clauses form a basis for the admissible clauses for $\DB$
\begin{multline*} 
 ( \{ x\land_k y\approx 1_t \}, \{x\approx 1_t,\ y\approx1_t\} ), \ \ 
( \{ x\lor_k y\approx 1_t\}, \{x\approx 1_t, \ y\approx 1_t\} ) \\
\mbox{ and }\
 (\{0_t=1_t\},\emptyset).
\end{multline*} 
\end{thm}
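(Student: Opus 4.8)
The plan is to obtain the theorem by composing Lemma~\ref{Lem:CM} with Lemma~\ref{Lem:AdmDB}; no new computation is required, only a small amount of bookkeeping.

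For the first sentence, I would first note that $\four$ embeds into $\alg{F}_{\DB}(\aleph_0)$ --- this is exactly the observation made in the proof of Lemma~\ref{Lem:AdmDB}(i), that $\four$ is a subalgebra of every non-trivial member of $\DB$. Consequently $\DB=\ISP(\four)\subseteq\ISP(\alg{F}_{\DB}(\aleph_0))\subseteq\DB$, so $\DB=\ISP(\alg{F}_{\DB}(\aleph_0))$. Now a quasi-equation admissible in $\DB$ is, by definition, valid in $\alg{F}_{\DB}(\aleph_0)$; since validity of a quasi-equation is inherited by isomorphic copies, subalgebras and products, it is valid throughout $\ISP(\alg{F}_{\DB}(\aleph_0))=\DB$. (Equivalently, one may invoke Lemma~\ref{Lem:CM}(ii) with $\Lambda=\emptyset$, using Lemma~\ref{Lem:AdmDB}(i) to see that every finite member of $\DB$ lies in $\ISP(\alg{F}_{\DB}(\aleph_0))$.)

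For the clause basis, let $\Lambda$ denote the three displayed clauses, i.e.\ exactly the list (1)--(3) appearing in Lemma~\ref{Lem:AdmDB}(ii)(c). I would first record that $\DB=\ISP(\four)$ with $\four$ finite, so $\DB$ is a locally finite quasivariety and Lemma~\ref{Lem:CM}(i) is applicable. Lemma~\ref{Lem:AdmDB}(ii) establishes, for every finite $\A\in\DB$, the equivalence of $\A\in\ope{IS}(\alg{F}_{\DB}(\aleph_0))$ with $\A$ satisfying $\Lambda$ (through the intermediate condition that $\D_{\DB}(\A)$ be a non-empty bounded poset, where the strongness of the duality of Theorem~\ref{DBnatdual} is used to move between sub-objects of algebras and quotients of their dual spaces). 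This is precisely hypothesis (i)(a) of Lemma~\ref{Lem:CM}, so its conclusion (i)(b) gives that $\Lambda$ is a basis for the admissible clauses of $\DB$, as claimed.

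The argument has essentially no obstacle of its own: all the real work is already contained in Lemmas~\ref{Lem:CM} and~\ref{Lem:AdmDB}. The only points needing attention are (a) the verification that $\DB$ is locally finite, which is immediate from $\DB=\ISP(\four)$ with $\four$ finite, and (b) checking that the clause list displayed in the theorem agrees verbatim with the list (1)--(3) of Lemma~\ref{Lem:AdmDB}(ii)(c), so that Lemma~\ref{Lem:CM} can be applied with that exact $\Lambda$.
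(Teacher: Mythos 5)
Your proposal is correct and follows exactly the paper's route: the paper derives this theorem in one line by combining Lemma~\ref{Lem:CM} with Lemma~\ref{Lem:AdmDB}, which is precisely the composition you carry out (with the local finiteness of $\DB=\ISP(\four)$ and the identification of the displayed clauses with list (1)--(3) being the only bookkeeping needed).
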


To simplify the proof of Lemma~\ref{Lem:AdmDB}
 the clauses presented in the previous theorem used
 the  $k$-lattice operation.
 We can use Lemma~\ref{90deg} to rewrite the clauses using only constants and $t$-lattice operations.
\begin{lem}\label{Lem:AdmDBU}
Every finite unbounded
distributive bilattice $\A$ is isomorphic to a subalgebra of $\alg{F}_{\DBU}(\aleph_0)$.
\end{lem}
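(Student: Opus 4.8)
The plan is to dualise. Theorem~\ref{DBUnatdual} provides a strong (hence full) duality between $\DBU$ and $\CP_{01}$ via the hom-functors $\Du$ and $\Eu$, and, as recalled in the discussion of free algebras (cf.\ \cite[Corollary~II.2.4]{CD98}), $\alg{F}_{\DBU}(\aleph_0)\cong\Eu(\fourDBTU^{\aleph_0})$ and correspondingly $\Du(\alg{F}_{\DBU}(\aleph_0))\cong\fourDBTU^{\aleph_0}$. Since $\Eu=\CP_{01}(-,\fourDBTU)$ is a hom-functor, a surjective $\CP_{01}$-morphism $g\colon\fourDBTU^{\aleph_0}\to\Du(\A)$ yields an injective homomorphism $\Eu(g)$ (if $\phi\circ g=\phi'\circ g$ and $g$ is onto, then $\phi=\phi'$), so that $\A\cong\Eu\Du(\A)$ embeds into $\Eu(\fourDBTU^{\aleph_0})\cong\alg{F}_{\DBU}(\aleph_0)$. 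Thus it suffices to construct such a $g$; this is the unbounded counterpart of the implication (b)$\Rightarrow$(a) in Lemma~\ref{Lem:AdmDB}(ii).

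First I would note that, $\A$ being finite, $\Du(\A)$ is a finite object of $\CP_{01}$, i.e.\ a finite poset equipped with a bottom element $\mathbf{b}$ and a top element $\mathbf{t}$. This is precisely why the unbounded case carries no side conditions: membership in $\CP_{01}$ already forces $\Du(\A)$ to be a non-empty bounded poset, whereas in the bounded case one had to assume non-emptiness and boundedness of the dual. List the elements of $\Du(\A)$ other than $\mathbf{b},\mathbf{t}$ as $x_1,\dots,x_n$ and define $f\colon\fourDBTU^{\,n+1}\to\Du(\A)$ by
\[
f(c_0,c_1,\dots,c_n)=
\begin{cases}
\mathbf{t}& \text{if } c_0\neq 01,\ \text{or } c_i\neq 01 \text{ for at least two } i\in\{1,\dots,n\},\\
x_i& \text{if } c_0= 01,\ c_i\neq 01,\ \text{and } c_j= 01 \text{ for all } j\in\{1,\dots,n\}\setminus\{i\},\\
\mathbf{b}& \text{if } c_0=c_1=\dots=c_n= 01.
\end{cases}
\]
Composing $f$ with a coordinate projection $\fourDBTU^{\aleph_0}\to\fourDBTU^{\,n+1}$ will give $g$.

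It then remains to verify that $f$ is a $\CP_{01}$-morphism and is surjective. Surjectivity and continuity (all spaces are finite) are immediate, and $f$ sends the bottom $(01,\dots,01)$ and the top $(10,\dots,10)$ of $\fourDBTU^{\,n+1}$ to $\mathbf{b}$ and $\mathbf{t}$ respectively, so it commutes with the two nullary operations of the alter ego. The one part requiring a short argument --- and the only place where anything could go wrong --- is order-preservation of $f$; I would prove it by a case analysis on the value of $f(c)$, using the single observation that in $\fourDBTU$ one has $c\leq_k d$ with $c\neq 01$ only when $d\neq 01$. Since the displayed formula is uniform in $n\geq 0$, no separate treatment of small duals is needed. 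Beyond this bookkeeping there is no genuine obstacle: the real content of the lemma is just the automatic boundedness and non-emptiness of duals in $\CP_{01}$.
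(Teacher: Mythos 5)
Your proposal is correct and follows essentially the same route as the paper: dualise, note that $\Du(\A)$ is a finite doubly-pointed poset, and exhibit an explicit surjective $\CP_{01}$-morphism from a finite power of $\fourDBTU$ onto $\Du(\A)$ whose $\Eu$-image gives the embedding. Your map differs only cosmetically (an extra coordinate $c_0$, which incidentally also covers the degenerate case $n=0$), and your observation that injectivity of $\Eu(g)$ needs only that $\Eu$ is a hom-functor applied to a surjection is a mild simplification of the paper's appeal to strongness.
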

\begin{proof}
Let $\Du(\A)=(X;\leq,\top,\bot,\Tp)$. Since we assume that every algebra is non-empty,
 $X$ is non-empty. Let $X=\{\top,\bot,x_1,\ldots,x_n\}$  be an enumeration of the elements of $X$. Let $\spc{Q}=(\fourDBTU)^{n}$.
 Then $\Eu(\spc{Q})$ is the free distributive bilattice on $n$ generators and it belongs to $\ope{IS}(\alg{F}_{\DBU}(\aleph_0))$. 
Define $f\colon \spc{Q}\to \D_{\DB}(\A)$
by 
\[
f(c_1,\ldots,c_{n})=\begin{cases}
	\bot & \mbox{ if }c_i=0_k\mbox{ for each }i\in\{1,\ldots,n\},\\
	x_i & \mbox{ if }c_i\neq0_k\mbox{ and } c_j=0_k\mbox{ for each }j\in\{1,\ldots,n\}\setminus\{i\} ,\\
	\top & \mbox{ otherwise.}\\
\end{cases}
\]
Then $f$ is a continuous order-preserving map
with image  
$\Du(\A)$.  
Since the duality presented in Theorem~\ref{DBUnatdual} is strong,
 $\Eu(f)\colon\Eu\Du(\A)\to \Eu(\spc{Q})$ is injective. Then  $\A\in\ope{IS}(\alg{F}_{\DBU}(\aleph_0))$.
\end{proof}
The  
following theorem follows directly from  Lemmas~\ref{Lem:CM} and~\ref{Lem:AdmDBU}.
\begin{thm}
Every admissible clause in $\DBU$ is also valid in $\DBU$.
\end{thm}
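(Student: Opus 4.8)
The plan is to apply Lemma~\ref{Lem:CM}(i), exactly as in the analogous bounded result. Concretely, the final theorem asserts that every admissible clause in $\DBU$ is valid in $\DBU$; by the definition of admissibility recorded in the excerpt, this is equivalent to saying that $\alg{F}_{\DBU}(\aleph_0)$ satisfies the same clauses as $\DBU$ itself, i.e.\ that $\DBU$ coincides with the universal class generated by $\alg{F}_{\DBU}(\aleph_0)$. By Lemma~\ref{Lem:CM}(i) applied to the (locally finite) quasivariety $\CA = \DBU$, with $\Lambda = \emptyset$, it suffices to show that \emph{every} finite $\A \in \DBU$ lies in $\ope{IS}(\alg{F}_{\DBU}(\aleph_0))$ --- the empty set of clauses is then trivially a basis for the admissible clauses, which means precisely that every admissible clause is valid.

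First I would invoke Lemma~\ref{Lem:AdmDBU}, which states exactly that every finite unbounded distributive bilattice is (isomorphic to) a subalgebra of $\alg{F}_{\DBU}(\aleph_0)$. This is the substantive ingredient, and it has already been proved in the excerpt using the strong natural duality of Theorem~\ref{DBUnatdual}: one enumerates the (necessarily finite) doubly-pointed Priestley space $\Du(\A)$, maps a suitable finite power $(\fourDBTU)^n$ onto it by a continuous order-preserving surjection, and then dualises, using strongness to conclude that the algebra map $\Eu(f)$ is an embedding into the free algebra on $n$ generators.

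With Lemma~\ref{Lem:AdmDBU} in hand, the two halves of Lemma~\ref{Lem:CM}(i) apply: condition (a) there (for every finite $\A \in \DBU$, $\A \in \ope{IS}(\alg{F}_{\DBU}(\aleph_0))$ iff $\A$ satisfies $\Lambda$) holds with $\Lambda = \emptyset$ because, by Lemma~\ref{Lem:AdmDBU}, the left-hand side always holds and so does the (vacuous) right-hand side; hence condition (b), that $\Lambda = \emptyset$ is a basis for the admissible clauses of $\DBU$, follows. Unwinding the definition of ``basis for the admissible clauses'' given just before Lemma~\ref{Lem:CM}, this says that $\A$ satisfies all clauses in $\emptyset$ iff $\A$ is in the universal class generated by $\alg{F}_{\DBU}(\aleph_0)$; since the left side is vacuous, $\DBU$ is contained in --- and hence equals --- that universal class, which is exactly the assertion that every admissible clause in $\DBU$ is valid in $\DBU$.

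There is essentially no obstacle left at this stage: the only place where real work occurs is Lemma~\ref{Lem:AdmDBU}, and that is already established. It is worth remarking on why the unbounded case is cleaner than the bounded one: for $\DB$ the analogous statement is Lemma~\ref{Lem:AdmDB}(i) together with the genuinely restrictive clauses (1)--(3) of part (ii)(c), reflecting that $\D_{\DB}(\A)$ need not be bounded; but for $\DBU$ the duals $\Du(\A)$ are automatically doubly-pointed Priestley spaces, so the boundedness obstruction disappears, every finite algebra embeds into the free one, and no nontrivial admissible clauses arise. This also explains why the theorem here is stated only for clauses (a fortiori for quasi-equations) and with no explicit basis: the empty basis does the job.
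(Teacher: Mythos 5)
Your proposal is correct and follows exactly the paper's route: the paper derives this theorem directly from Lemma~\ref{Lem:CM} together with Lemma~\ref{Lem:AdmDBU}, precisely by observing that every finite algebra in $\DBU$ embeds into $\alg{F}_{\DBU}(\aleph_0)$, so the empty set is a basis for the admissible clauses. Your unwinding of the definitions and your remark contrasting the bounded and unbounded cases are both accurate.
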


\section{Multisorted natural dualities} \label{sec:multi}
 We have delayed presenting dualities for  pre-bilattice varieties  because, 
to fit  $\DPB$ and $\DPBU$ 
into our general representation scheme,
we shall draw on  the multisorted  version of 
natural duality theory. This  
originated in \cite{DP87} and is summarised in
\cite[Chapter 7]{CD98}. 
It  is applicable in particular to the situation that interests us, 
in which we 
have a quasivariety 
 $\CA = \ISP(\M_1,\M_2)$, where~$\M_1$ and $\M_2$  are non-isomorphic finite algebras of common type 
having a reduct in $\CCD$ or $\DU$.
We 
require 
the 
 theory only for algebras
$\M_1$ and~$\M_2$ of size~two.
We do not
set up the machinery of piggybacking, opting  
instead to work  with the multisorted version of the NU Duality Theorem, as given in \cite[Theorem~7.1.2]{CD98},   in a  form 
adequate 
to yield  strong dualities
for $\DPB$ and $\DPBU$.  We now give 
just enough 
information to  enable us to formulate
 the results we require.
 The ideas parallel
those presented in
Section~\ref{piggyonesorted}. 

Given 
 $\CA = \ISP(\M_1,\M_2)=\ISP(\CM)$,
we shall  initially consider an alter ego  for $\CM$ which takes  the form 
 $
   \CMT = (M_1 \du M_2; R, \Tp)$,
where $R$ is  a  set of relations each of which is a subalgebra 
of some $\M_i \times \M_j$, where $i,j \in \{1,2\}$.  (To obtain a strong duality
 we may need to
allow for 
nulllary  
operations 
as well, but for simplicity we defer  
introducing this refinement.)
The alter ego $\CMT$ is given the disjoint union topology derived from 
the discrete topology on $\M_1$ and $\M_2$.  
 We may then form multisorted topological structures~$\X = \X_1 \du \X_2 $ where each of the sorts $\X_i$ is a Boolean topological space,~$\X$ is equipped with the disjoint union topology
and, regarded as a structure, $\X$ carries a set 
$R^\X$ of relations $r^\X$; if $r \subseteq M_i \times M_j$,
then $r^\X \subseteq X_i \times  X_j$.
 Given 
structures $\X $ and $\Y$ in $\CX$, a morphism $\phi \colon \X \to \Y$
is a continuous map preserving the sorts, so that 
$\phi(X_i) \subseteq Y_i$, and  
 $\phi$ preserves 
the relational structure.
The terms isomorphism, 
embedding, etc., extend in the obvious way to the multisorted setting.

We define our dual category $\CX$ to have 
as objects those structures $\X$ 
which belong to 
$\IScP(\CMT)$.  Thus $\CX$ consists of isomorphic copies of closed substructures of powers of $\CMT$; here powers are formed  `by sorts'; 
and   the 
relational structure is lifted pointwise to substructures of such powers in the expected way.
We now define the hom-functors that will set up our duality.  Given 
$\A \in \CA$ and  we let
 $ 
 \D(\A) = \CA(\A,\M_1) \du \CA(\A,\M_2)$,
where  $\CA(\A,\M_1) \du \CA(\A,\M_2)$ is a (necessarily closed) substructure of $M_1^A
\du
M_2^A$ with the relational structure defined pointwise.
Given $\X = \X_1 \du \X_2 \in \CX$,  
we may form the set $\CX(\X,\CMT)$ of 
 $\CX$-morphisms from $\X$ into $\CMT$. 
This set 
acquires the structure of a member of 
$\CA$ by virtue of viewing it as a subalgebra of the power $\M_1^{X_1} 
 \times   
\M_2^{X_2}$. 
We 
define
$\E(\X) = \CX(\X,\CMT)$.
Let  $\D$ and $\E$  act on morphisms by composition in the obvious way.
We then have well-defined functors $\D \colon \CA \to \CX$ 
and $\E \colon \CX \to \CA$.  
We say $\CMT$ \defn{yields a multisorted duality} if, for each 
$\A \in \CA$,  the natural 
multisorted evaluation map $e_{\A}$ given by $e_{\A} (a) \colon x \mapsto x(a)$
is an isomorphism from~$\A$ to $\ED(\A)$.   The duality is \defn{full} if 
each evaluation map $\varepsilon_{\X} \colon \X \to \D\E(\X)$
is an isomorphism.  As before 
 we do not 
present the definition of strong duality,
noting only that a strong duality is 
necessarily full.
The following very restricted form of \cite[Theorem~7.1.1]{CD98}  will 
meet  our needs.

\begin{thm}  {\rm(Multisorted NU Strong Duality Theorem, special case)}\label{Theo:NuTwoSorts}
Let $\CA = \ISP(\M_1,\M_2)$, where $\M_1,\M_2$ are  
two-element 
 algebras of common type having lattice reducts.  Let 
$
\CMT = (\,M_1 
\du 
 M_2 ;  R, N,\Tp\, ) 
$
where 
 $ N $ contains all one-element subalgebras of 
$\M_i$, for $i=1,2$, treated as nullary operations, 
$ 
R$ is the set $\bigcup 
\{\, \Su(\M_i\times\M_j)\mid i,j\in\{1,2\}\,\}$,
and $\Tp$ is
is the disjoint union topology obtained from  the discrete topology 
on
$M_1$ and $M_2$.  
Then $\CMT$ yields a multisorted
duality on $\CA$ which is strong.
\end{thm}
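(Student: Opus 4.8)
The plan is to obtain this as a routine specialisation of the multisorted near-unanimity (NU) strong duality theorem \cite[Theorem~7.1.1]{CD98}: because the result is only ever applied with $\M_1$ and $\M_2$ of size two, the work reduces to checking the hypotheses of that theorem and confirming that the relations and nullary operations listed in $\CMT$ are exactly the ones it prescribes.

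The first and decisive step is to produce a common NU term for $\M_1$ and $\M_2$. Since each $\M_i$ has a lattice reduct, the ternary median term $m(x,y,z)=(x\land y)\lor(y\land z)\lor(z\land x)$ is a term in the shared language, and the lattice identities $m(x,x,y)=m(x,y,x)=m(y,x,x)=x$ show that it interprets as a majority operation in each $\M_i$. Thus $\M_1$ and $\M_2$ share a near-unanimity term of arity $n+1$ with $n=2$, which is the single hypothesis needed to set the NU machinery in motion.

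With this in hand I would first read off the duality. The multisorted NU Duality Theorem says that, given a common NU term of arity $n+1$, the alter ego yields a duality as soon as its relational part contains every subalgebra of every $n$-fold product of sorts; for $n=2$ these are precisely the members of $\Su(\M_i\times\M_j)$ for $i,j\in\{1,2\}$, that is, exactly the set $R$. The nullary operations $N$ and the disjoint union topology play no role at this stage, so $\CMT$ already yields a multisorted duality on $\CA$.

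The remaining, slightly more delicate, point is strongness. The multisorted NU strong duality theorem asks in addition that the alter ego encode the homomorphisms between subalgebras of the sorts. I would dispatch this with two observations: (i) the graph of any homomorphism from $\M_i$ to $\M_j$, an endomorphism of $\M_i$ in particular, is a subalgebra of $\M_i\times\M_j$ and hence already lies in $R$; and (ii) since $\M_1$ and $\M_2$ each have exactly two elements, every proper non-empty subalgebra of $\M_i$ is one-element, so any non-extendable partial operation of the relevant type has a one-element domain — the same phenomenon used in the proof of Theorem~\ref{genpigoneMu}. Adjoining all one-element subalgebras of $\M_1$ and of $\M_2$ as nullary operations, which is exactly $N$, therefore supplies the missing content, and \cite[Section~3.3]{CD98}, in its multisorted form, promotes the duality to a strong one. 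I expect the only obstacle to be citation bookkeeping — confirming that the multisorted NU strong duality theorem is stated in the form just used — since for size-two algebras there is no computation and the usual caveats about higher-arity NU terms and larger-domain partial operations are vacuous.
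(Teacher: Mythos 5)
The paper states this theorem without proof, offering it only as a ``very restricted form'' of the multisorted NU theorem of Clark and Davey \cite{CD98}, and your proposal fills in exactly the verification that citation presupposes: the median term of the lattice reducts is the required majority term, the binary algebraic relations between sorts then yield the duality, and the two-element hypothesis collapses the strongness requirement to the nullaries in $N$, just as in the paper's own argument for Theorem~\ref{genpigoneMu}. The one point worth tightening is your step (i): for strongness what matters is not that graphs of homomorphisms lie in $R$ but that every total homomorphism between two-element lattice-based sorts is either an identity or a constant (the latter entailed by the nullary operations in $N$), which is automatic here since an order-reversing bijection cannot preserve the lattice operations.
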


\section{Dualities for distributive pre-bilattices} \label{sec:DPB}

Paralleling our treatment of other varieties, we first record the 
result on the structure of $\DPBU$ and $\DPB$ we shall require.

\begin{prop} \label{sep-pre-bilat}  
{\rm (i)}  $\DPBU = \ISP(\twoU^+,\twoU^-)$ and 
{\rm (ii)} 
 $\DPB = \ISP(\two^+,\two^-)$.
\end{prop}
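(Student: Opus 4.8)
The plan is to mimic the strategy used for $\DB$ and $\DBU$ in Propositions~\ref{sep-prop-bdd} and~\ref{sep-prop-unbdd}, now in the negation-free setting where the separating algebras have size two. I will prove (i) in detail and obtain (ii) by essentially the same argument. First, recall that $\twoU^{+}$ and $\twoU^{-}$ both belong to $\DPBU$ (as noted in Section~\ref{sec:DBilat}), so the inclusion $\ISP(\twoU^{+},\twoU^{-}) \subseteq \DPBU$ is immediate. For the reverse inclusion, take $\A \in \DPBU$ and $a \ne b$ in $\A$; I must produce a $\DPBU$-homomorphism from $\A$ onto $\twoU^{+}$ or $\twoU^{-}$ separating $a$ and $b$.

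Since $\A_t$ is an unbounded distributive lattice and $a \ne b$, the Prime Ideal Theorem for unbounded distributive lattices gives $x \in \DU(\A_t,\twoU)$ with $x(a) \ne x(b)$; that is, $x$ is a lattice homomorphism from $\A_t$ onto $\twoU$. Now invoke the consequences of Proposition~\ref{lem:cong2} recorded after its proof: because $x$ is a lattice homomorphism from $\A_t$ into $\two$, it is also a lattice homomorphism from $\A_k$ into either $\two$ or its order dual $\two^{\partial}$. In the first case, $x$ is simultaneously a homomorphism onto the $t$-lattice and onto the $k$-lattice of a two-element pre-bilattice whose two orders coincide, i.e.\ onto $\twoU^{+}$; in the second case it is a homomorphism onto the two-element pre-bilattice whose $k$-order is dual to its $t$-order, i.e.\ onto $\twoU^{-}$. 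Either way $x$ is a $\DPBU$-homomorphism with $x(a)\ne x(b)$, so $\A$ embeds into a product of copies of $\twoU^{+}$ and $\twoU^{-}$, giving $\A \in \ISP(\twoU^{+},\twoU^{-})$. This proves (i).

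For (ii), repeat the argument with bounds present. Given $\A \in \DPB$ and $a \ne b$, choose $x \in \CCD(\A_t,\two)$ with $x(a)\ne x(b)$ (now using that $\A_t$ is a bounded distributive lattice, so $x$ preserves $0_t,1_t$). As before, $x$ is a lattice homomorphism from $\A_k$ into $\two$ or $\two^{\partial}$; one checks that it also respects the knowledge constants $0_k,1_k$, using the identities $x(0_k)\wedge x(1_k)=x(0_k\land_t 1_k)=x(0_t)=0$ and $x(0_k)\lor x(1_k)=x(0_t\lor_t 1_t)=x(1_t)=1$ coming from the $90^{\circ}$ Lemma, so that $\{x(0_k),x(1_k)\}=\{0,1\}$ and hence $x$ lands in $\two^{+}$ when $\leq_k$ coincides with $\leq_t$ (forcing $x(0_k)=0$, $x(1_k)=1$) and in $\two^{-}$ when $\leq_k$ coincides with $\geq_t$ (forcing $x(0_k)=1$, $x(1_k)=0$), matching the constants of $\two^{+}$ and $\two^{-}$ described in Section~\ref{sec:DBilat}. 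Thus $x$ is a $\DPB$-homomorphism separating $a$ and $b$, and $\DPB = \ISP(\two^{+},\two^{-})$.

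The only mildly delicate point is the bookkeeping in (ii): verifying that the single separating lattice homomorphism $x$ automatically preserves \emph{all} the constants and \emph{both} sets of lattice operations of the chosen two-element target, and that the two cases ``$\leq_k=\leq_t$'' and ``$\leq_k=\geq_t$'' correspond exactly to the targets $\two^{+}$ and $\two^{-}$ with the constant assignments forced as above. This is routine given Proposition~\ref{lem:cong2} and the $90^{\circ}$ Lemma, and is the step I would write out most carefully; everything else is a direct transcription of the proofs already given for $\DB$ and $\DBU$.
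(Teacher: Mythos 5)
Your proposal is correct and follows essentially the same route as the paper: both arguments rest on Proposition~\ref{lem:cong2}, the paper passing via the kernel congruence of the separating map $x$ and its two-element quotient, and you via the consequence recorded immediately after that proposition, namely that $x$ is automatically a $k$-lattice homomorphism into $\two$ or $\two^{\partial}$. The only blemish is a typo in your treatment of (ii): the second identity should read $x(0_k)\lor x(1_k)=x(0_k\lor_t 1_k)=x(1_t)=1$ rather than involving $x(0_t\lor_t 1_t)$.
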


\begin{proof}
Let $\A \in \DPBU$ and let $a \ne b$ in $\A$.  Since $\DU= \ISP(\twoU)$, there exists $x \in \DU(\A_t,\twoU)$ with 
$x(a) \ne x(b)$.  The relation $\theta$ given by  $c \, \theta \,  d$ if and only if $x(c) = x(d)$ 
is a $t$-lattice congruence and hence, by
Proposition~\ref{lem:cong2},
a $\DPBU$-congruence.   The associated quotient algebra has two elements, and is necessarily (isomorphic to) either $\twoU^+$ or 
$\twoU^-$.  This proves~(i).
The same form of 
argument works for (ii),  the only difference being that the map~$x$ 
now also preserves 
bounds.    
\end{proof}

The following two theorems are consequences of the Multisorted 
NU Duality Theorem.  We consider $\DPB$ first since the absence of 
one-element subalgebras makes matters particularly simple. 
We tag elements  with $\pm$ to indicate which $2$-element algebra they belong to.  In both cases we could use
either $\leq_k$ or $\leq_t$ as the subalgebra of the square in 
either component.  The choice we make mirrors that  forced 
when 
negation is present.  The choice  will affect how the translation to the Priestley-style duality operates, but not
the resulting duality.

\begin{thm}  \label{Theo:DPBduality}
A 
strong 
natural
 duality for  $\DPB = \ISP(\two^+, \two^-)$ is obtained
as follows.  Take $\CM = \{ \two^+,\two^-\}$ and 
as  the alter ego 
\[
\CMT = (\{ 0^+,1^+\} 
\du 
\{ 0^-,1^-\}; r^+, r^-, \Tp),
\]
where
 $r^+$  is $\leq_k$ on $\two^+$ 
and 
$r^-$ is  
$\leq_k $ on $\two^-$.
 Moreover 
$\DPB$ is dually equivalent to the category $\CX=\IScP(\CMT )$.
\end{thm}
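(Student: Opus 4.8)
The plan is to derive the stated duality from the Multisorted NU Strong Duality Theorem (Theorem~\ref{Theo:NuTwoSorts}) and then to prune the resulting alter ego down to the two relations $r^+$ and $r^-$ by an entailment argument, in the same spirit as the proof of Theorem~\ref{DBUnatdual}.

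First I would note, using Proposition~\ref{sep-pre-bilat}(ii), that $\DPB = \ISP(\two^+,\two^-)$, where $\two^+$ and $\two^-$ are two-element algebras with lattice reducts; this is precisely the setting of Theorem~\ref{Theo:NuTwoSorts}, with $\M_1 = \two^+$ and $\M_2 = \two^-$. Since the nullary operations $0_t$ and $1_t$ are interpreted as distinct elements of each of $\two^+$ and $\two^-$, neither algebra has a one-element subalgebra, so the set $N$ of Theorem~\ref{Theo:NuTwoSorts} is empty. That theorem therefore already yields a strong, hence full, multisorted duality for $\DPB$ whose alter ego carries the relation set $R = \bigcup\{\,\Su(\M_i\times\M_j)\mid i,j\in\{1,2\}\,\}$; it then remains to show that $R$ may be replaced by $\{r^+,r^-\}$ without destroying strongness.

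The second step is a routine inspection of the relevant subalgebras. A subalgebra of $\two^+\times\two^+$ must contain $(0_t,0_t)=(0,0)$ and $(1_t,1_t)=(1,1)$ and be a sublattice of $\two\times\two$, so it is one of $\Delta_{\two^+}$, $r^+$, the converse of $r^+$, or $(\two^+)^2$; the same four possibilities arise for $\two^-\times\two^-$, with $r^-$ in place of $r^+$. For the mixed products the four constants $0_t,1_t,0_k,1_k$ of $\two^+$ are paired with those of $\two^-$ so as to realise all four elements of $\{0,1\}^2$, so the only subalgebra of $\two^+\times\two^-$ is the full product, and likewise for $\two^-\times\two^+$. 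Hence $R$ consists of the two diagonals, the two squares $(\two^\pm)^2$, the two full mixed products, and the relations $r^+$, $r^-$ together with their converses.

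For the third and last step I would invoke the entailment facts recorded in \cite[Subsection~2.4.3]{CD98}: a diagonal relation and a full (universal) relation are trivially entailed, and a binary relation entails its converse. Consequently every member of $R$ is entailed by $\{r^+,r^-\}$, and the $\MT$-Shift Strong Duality Lemma \cite[3.2.3]{CD98} ensures that the reduced alter ego $\CMT=(\{0^+,1^+\}\du\{0^-,1^-\};r^+,r^-,\Tp)$ still yields a strong, hence full, duality on $\DPB$. Fullness is exactly the statement that $\D$ and $\E$ restrict and corestrict to a dual equivalence between $\DPB$ and $\CX=\IScP(\CMT)$, completing the proof. I do not expect any real obstacle: the only point requiring a little care is the bookkeeping of the constant-pairings that forces the mixed products $\two^+\times\two^-$ and $\two^-\times\two^+$ to have no proper subalgebras, and everything else is a direct appeal to the machinery quoted above.
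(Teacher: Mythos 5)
Your proposal is correct and follows essentially the same route as the paper: the paper's own proof simply records that $\two^+$, $\two^-$ and the two mixed products have no proper subalgebras while $\two^+\times\two^+$ and $\two^-\times\two^-$ have only the diagonal, $\leq_k$ and its converse as proper subalgebras, and then applies Theorem~\ref{Theo:NuTwoSorts}, leaving the discarding of trivial relations and converses implicit. Your version merely spells out the entailment and $\MT$-shift details that the paper takes for granted, and your bookkeeping of the constants (which forces the mixed products to have no proper subalgebras and makes $N=\emptyset$) is accurate.
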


\begin{proof} 
The algebras $\two^+$, $\two^-$, $\two^+\times\two^-$ and $\two^-\times\two^+$ have no proper 
subalgebras.  The proper subalgebras of $\two^+\times\two^+$ are
the diagonal subalgebra $\{(0,0),(1,1)\}$, and  $\leq_k$ and 
its converse,  
and likewise for  $\two^-\times\two^-$.
\end{proof}

Let $\CMT $ and $\CX$ be 
as in Theorem~\ref{Theo:DPBduality}.  
Since $r^+$ and $r^-$ are partial orders on the respective sorts, $(X_1,X_2;\leq_1,\leq_2,\Tp)$ belongs to 
$\IScP(\CMT)$ 
 if and only if the topological posets
 $(X_1,\leq_1,\Tp{\upharpoonright}_{X_1})$ and $(X_2,\leq_2,\Tp{\upharpoonright}_{X_2})$ are Priestley spaces. Moreover, since the morphisms in $\CX$ are continuous maps that preserve the sorts and both relations, 
we deduce
 that 
 a categorical equivalence between~$\CX$  and $\CP\times\CP$ is set up
by 
the functors $\fnt{F}\colon \CX\to \CP\times\CP$ and $\fnt{G}\colon \CP\times\CP\to \CX$  defined~by
\begin{alignat*}{3}
&\text{on objects:} & \hspace*{.95cm} &
\X= (X_1\du X_2;\leq_1,\leq_2,\Tp)\, \longmapsto \fnt{F}(\X) =
\hspace*{.95cm}\phantom{\text{on objects:}}&&
\\
       &&      &  \hspace*{3.5cm} \bigl( (X_1;\leq_1,\Tp{\upharpoonright}_{X_1}), (X_2;\leq_2,\Tp{\upharpoonright}_{X_2}) \bigr),&&
\\
&\text{on morphisms:}  & &
\phantom{\X =(X_1
\du X_2;\leq_1,\leq_2,) } 
\, h \longmapsto \fnt{F}(h) = 
(h{\upharpoonright}_{X_1},h{\upharpoonright}_{X_2}), 
&&\\
\shortintertext{and} 
&\text{on objects:} & &
\Z = 
 \bigl((X;\leq_X,\Tp_X),(Y;\leq_Y,\Tp_Y) \bigr)\,\longmapsto  \fnt{G}(\Z) = && \\
 & &  & \hspace*{5.5cm} (X\du Y;\leq_X,\leq_Y, \Tp),  && \\
&\text{on morphisms:} & &
  \phantom{
 X;\leq_X,\Tp_X),(Y;\leq_Y,\Tp_Y)\,\,\, } 
 (f_1,f_2)\longmapsto  \fnt{G}(f_1,f_2) = f_1\du f_2, &&
\end{alignat*}
where $\Tp$ is the topology on $X\du Y$ generated by $\Tp_X\du\Tp_Y$.
Then the diagram in Figure~\ref{fig:EquivDPBCCDCCD} proves that $\DPB$ is categorically equivalent to $\CCD\times\CCD$, where $\fnt{H}\times\fnt{H}$ and $\fnt{K}\times\fnt{K}$ are the corresponding product functors.
\begin{figure}  [ht]
\begin{center}
\begin{tikzpicture} 
[auto,
 text depth=0.25ex,
 move up/.style=   {transform canvas={yshift=1.9pt}},
 move down/.style= {transform canvas={yshift=-1.9pt}},
 move left/.style= {transform canvas={xshift=-2.5pt}},
 move right/.style={transform canvas={xshift=2.5pt}}] 
\matrix[row sep= 1cm, column sep= 1.7cm]
{ 
\node (DPB) {$\DPB$}; & \node (X) {$\CX$}; & \node (PP) {$\CP\times\CP$};  & \node (DD) {$\CCD\times\CCD$};\\ 
};
\draw  [->, move up] (DPB) to node  [yshift=-2pt] {$\D$}(X);
\draw [<-,move down] (DPB) to node [swap]  {$\E$}(X);
\draw  [->,move up] (X) to node  [yshift=-2pt] {$\fnt{F}$}(PP);
\draw [<-,move down] (X) to node [swap]  {$\fnt{G}$}(PP);
\draw  [->,move up] (PP) to node  [yshift=-2pt] {$\fnt{K}\times\fnt{K}$}(DD);
\draw [<-,move down] (PP) to node [swap]  {$\fnt{H}\times\fnt{H}$}(DD);
\end{tikzpicture}
\end{center}
\caption{Equivalence between $\DPB$ and
$\CCD \times \CCD$
 }\label{fig:EquivDPBCCDCCD}
\end{figure}

 To obtain   a strong  
duality for $\DPBU$ we need first to determine $\Su(\M)$ and $\Su(\M\times\M')$ where $\M,\M'\in\{\twoU^+, \twoU^-\}$.   
To determine 
which binary relations to 
include we can argue 
in much the same way 
as for $\Su(\fourU^2)$.   
Decomposable subalgebras of 
$\Su(\M\times\M')$ can be discounted.  It is
simple  
to confirm that  
all  indecomposable $\DPBU$-subalgebras
are 
$\DPB$-subalgebras, and such subalgebras 
have already been 
identified in the proof of Theorem~\ref{Theo:DPBduality}.   
We omit the details.

\begin{thm}  \label{Theo:DPBUduality}
A strong, and hence full,
duality for  $\DPBU = \ISP(\twoU^+, \twoU^-)$ is obtained
as follows.  Take $\CM = \{ \twoU^+,\twoU^-\}$ and 
as  the alter ego 
\[
\CMT = (\{ 0^+,1^+\} \cup \{ 0^-,1^-\}; r^+, r^-, 0^+,  1^+, 
0^-,  1^-,\Tp),
\]
where $r^+$  is 
$ \leq_k$ on $\twoU^{+}$ and 
$r^-$ is   $\leq_k$ on $\twoU^{-}$ 
and the  constants are treated as 
 nullary operations.
\end{thm}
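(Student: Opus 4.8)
The plan is to obtain the duality from the Multisorted NU Strong Duality Theorem (Theorem~\ref{Theo:NuTwoSorts}) and then to cut the resulting alter ego down to the announced form, using the theory of entailment \cite[Subsection~2.4.3]{CD98}. First I would observe that $\twoU^{+}$ and $\twoU^{-}$ are two-element algebras of common type with lattice reducts, so Theorem~\ref{Theo:NuTwoSorts} applies verbatim: the alter ego carrying \emph{all} one-element subalgebras of $\twoU^{+}$ and of $\twoU^{-}$ as nullary operations, \emph{all} members of $\bigcup\{\Su(\M\times\M')\mid\M,\M'\in\{\twoU^{+},\twoU^{-}\}\}$ as relations, and the disjoint union topology, already yields a strong, hence full, multisorted duality on $\DPBU$.

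Second, I would record the subalgebra data whose routine verification was omitted just before the theorem. Each of $\twoU^{+}$ and $\twoU^{-}$ has precisely the subalgebras $\{0\}$, $\{1\}$ and the whole algebra. For the squares and mixed products, arguing exactly as for $\Su(\fourU^2)$ in Proposition~\ref{fourUsubs}: an \emph{indecomposable} proper subalgebra $r$ of $\M\times\M'$ must have both projections onto (otherwise $r$ is a product with a one-element factor), and then, closure under the $t$- and $k$-lattice operations forces $r$ to coincide with one of the $\DPB$-subalgebras identified in the proof of Theorem~\ref{Theo:DPBduality} --- namely, the diagonal, $\leq_k$, or $\geq_k$ when $\M=\M'$, and nothing when $\M\ne\M'$; every remaining subalgebra of $\M\times\M'$ is decomposable, i.e.\ a product $S\times T$ with $S\in\Su(\M)$ and $T\in\Su(\M')$.

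Third, I would pass to the stated alter ego $\CMT$ by discarding the entailed structure, invoking \cite[Subsection~2.4.3]{CD98} together with the $\MT$-Shift Strong Duality Lemma \cite[3.2.3]{CD98} so that strongness survives the trimming (this is the same manoeuvre used in the proof of Theorem~\ref{DBUnatdual}). Explicitly: the nullary operations $0^{+},1^{+}$ entail the one-element subalgebras $\{0^{+}\},\{1^{+}\}$ of $\twoU^{+}$, and likewise on the $-$ sort; the diagonal subalgebras and the full products $\M\times\M'$ are trivial relations, hence entailed; each decomposable subalgebra $S\times T$ is entailed by the unary relations $S$ and $T$, and each of these is either the trivial unary relation on its sort or a one-element subalgebra already entailed by a nullary operation; and on each sort it is superfluous to retain both a binary relation and its converse, so $\geq_k$ may be dropped once $\leq_k$ is kept. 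What remains is exactly $r^{+}=\leq_k$ on $\twoU^{+}$, $r^{-}=\leq_k$ on $\twoU^{-}$, and the four nullary operations $0^{+},1^{+},0^{-},1^{-}$; that is, the alter ego $\CMT$ of the statement. Hence $\CMT$ yields a strong, and therefore full, duality, so that $\D$ and $\E$ set up a dual equivalence between $\DPBU$ and $\IScP(\CMT)$.

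The main obstacle is purely bookkeeping: one must be sure that the nullary operations together with the trivial relations really do entail \emph{all} the discarded members of $\bigcup\Su(\M\times\M')$ --- in particular the mixed decomposable relations living on $M_1\times M_2$ --- and one must cite the correct lemma (the $\MT$-Shift Strong Duality Lemma) to guarantee that removing entailed structure leaves a \emph{strong} duality rather than merely a duality. Neither point is deep, which is why the verification of the subalgebra lattice is relegated to ``we omit the details''.
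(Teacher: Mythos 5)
Your proof is correct and follows essentially the same route the paper takes (and largely leaves implicit): invoke the Multisorted NU Strong Duality Theorem~\ref{Theo:NuTwoSorts}, determine $\Su(\M)$ and $\Su(\M\times\M')$ by the argument used for $\Su(\fourU^2)$ and in the proof of Theorem~\ref{Theo:DPBduality}, and then discard the trivial, decomposable and converse relations by entailment while preserving strongness via the $\MT$-Shift Strong Duality Lemma. The subalgebra and entailment bookkeeping you supply is precisely what the paper relegates to ``we omit the details''.
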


Reasoning as in the bounded case, $\CX=\IScP(\CMT)$ is categorically equivalent to $\CP_{01}\times\CP_{01}$. 
Then $\DPBU$ is categorically equivalent to $\DU\times\DU$.  We have an exactly parallel situation to that shown in the diagram in Figure~\ref{fig:EquivDPBCCDCCD}.

As an aside, 
we 
 remark 
that we could generate $\DPBU$ as a quasivariety
using the single generator ${\twoU^+ \times \twoU^-}$ and apply
Theorem~\ref{genpigoneMu}. 
But there are some merits in working with the pair of 
algebras $\twoU^+$ and $\twoU^-$.  Less work is involved to formulate a strong duality and to confirm that it is indeed strong.  More importantly for our purposes,
the translation to a Priestley-style duality is more transparent in the multisorted
framework.

As  was done in Theorems~\ref{Theo:ProdFunDB} and~\ref{Theo:ProdFunDBU} for $\DB$ and $\DBU$, respectively, it is possible to develop a different presentation (naturally equivalent) of the functors that determine the 
equivalences between 
 $\CCD\times\CCD$ and $\DPB$ and between $\DU\times\DU$ and $\DPBU$. This will lead to the known product decomposition of 
distributive pre-bilattices with and without bounds. We choose not to develop this here, since 
we would need to introduce the multisorted version of the piggyback duality (see \cite[Theorem~7.2.1]{CD98}).
The results could then  be  obtained just by modifying the arguments  used to prove Theorems~\ref{Theo:ProdFunDB} and~\ref{Theo:ProdFunDBU}. 
Also the applications presented in Section~\ref{Sec:Applications} can be extended to $\DPB$ and $\DPBU$ with the corresponding modifications.

\section{Concluding remarks}  \label{Sec:Conclude}

With our treatment of representation theory for distributive bilattices now complete, we can take stock of what we have achieved.

The scope of our work is somewhat different 
from that of other investigators of  bilattices.
Throughout  
 we have restricted attention 
to the distributive case.  
We have  not ventured into the territory of logical bilattices in this paper, but  we do observe that such bilattices are customarily assumed to be distributive.   
Nevertheless we should 
comment on the role of distributivity, as compared with the
weaker condition of interlacing. 
 Any interlaced (pre)-lattice has a product representation and, conversely, 
  such a representation is available only if the two lattice structures
are linked by interlacing.  Accordingly the product representation 
features very strongly in the literature.
As 
 indicated in Section~\ref{sec:prodrep}, the dual representations 
obtained in \cite{MPSV} and in \cite{JR}  build on 
Priestley duality as it applies to the varieties 
$\CCD$ and $\cat{DM}$.  The setting, perforce, is now that in which 
the bilattice structures are distributive and have bounds;  the product representation is brought into play to  handle the $k$-lattice operations.    

We  next comment 
 on the role of congruences.
In this paper,
the core result is Proposition~\ref{lem:cong2} asserting  
that the  congruences of any distributive  pre-bilattice coincide with the congruences of the $t$-lattice reduct and with  the congruences of the $k$-lattice reduct.  For the interlaced case, this result is obtained with the aid of the product representation and leads on  to a description of subdirectly irreducible algebras; see \cite{MPSV,RPhD,BJR11}.  
We exploited Proposition~\ref{lem:cong2}  to obtain   our $\ISP$ results 
for each of $\DB$, $\DBU$, $\DPB$ and $\DPBU$.
These results are of course immediate once the  subdirectly irreducible 
algebras are known, but our method of proof is much more direct.
Conversely, our results immediately yield descriptions of the subdirectly
irreducibles.

From
what is said above  
it might appear that, in certain aspects
 our approach leads to the same principal results as previous approaches do, albeit by a different route. But  we contend that  we have done much 
more than this. 
In our setting
we are able to harness  the techniques of 
natural duality  theory and to apply them in a systematic way 
to the best-known bilattice varieties.
We hereby  gain easy access to the applications  
presented, by way of illustration,  in Section~\ref{Sec:Applications}.
 It is true that the dualities developed in  \cite{MPSV} and in \cite{JR}
 can be described using our dualities and vice versa.  However 
 the deep connections
between 
 congruences of lattice reducts, our  $\ISP$ presentations, and the topological representation theory  only becomes clear using natural dualities.

We end  our paper with an interesting byproduct of our treatment 
which links back  
to the origins of bilattices.
The theory of bilattices and the investigation of four-valued logics have been intertwined ever since the concept of a bilattice was first introduced.  
In his seminal 
paper \cite{ND1} (also available  in \cite{ND2}),
Belnap introduced two lattices over the same four-element set $\{F,T,Both,None\}$,  the logical lattice $\alg{L4}$ and the approximation lattice $\alg{A4}$, the 
former admitting also a negation operation. 
With our notation, $\alg{L4}\cong(\{00,01,10,11\}; \lor_t,\land_t,\neg,00,11)$ and
  $\alg{A4}\cong \four_k$.
Belnap defines a \defn{set-up} as a map~$s$ from
 a set  $X$ of atomic formulas  
into $\{F,T,Both,None\}$, and extends  
$s$ in a unique way to a homomorphism
$\bar{s}\colon\alg{Fm}
(X)
\to \alg{L4}$,  
where 
$\alg{Fm}(X)$ to the set of formulas in the language $\{\land,\lor,\neg\}$.
 He then introduces a logic, understood as an entailment relation between formulas based on set-ups  
and what is nowadays called  
a Gentzen system which is
complete for this logic.
The connection between Belnap's logic and 
De Morgan lattices and  De Morgan algebras,
  hinted at in  
the definition of the former, was 
unveiled in detail by Font in \cite{F97} in the context of abstract algebraic logic.

 Belnap did more than just define his logic: he also presented a 
mathematical formulation 
of the  epistemic dynamic of the logic.
To do this, he  
defined \defn{epistemic states} as sets of set-ups and lifted the order 
on 
$\alg{A4}$ 
to  
a pre-order,  $\sqsubseteq$ (the  \defn{approximation order}),
between 
 epistemic states.
He  
then 
considered   
the partial order obtained from  
$\sqsubseteq$ by quotienting 
by the equivalence relation $\sqsubseteq\cap\sqsupseteq$ and  showed
 that the resulting poset  is isomorphic to the 
family of 
upward-closed sets 
of set-ups;  here set-ups are considered as elements of $\alg{A4}^{\alg{Fm}(X)}$ and are ordered pointwise. 
This emphasises the importance of the poset structure, 
as opposed to 
the algebraic structure, of $\alg{A4}$.
Furthermore, it is proved that, for each formula $A\in\alg{Fm}(X)$, the assignment $A\mapsto {\rm Tset}(A)=\{\, s\colon \bar{s}(A)\in\{T,Both\}\, \}$ maps conjunctions to intersections, disjunctions to unions and $\neg A\mapsto {\rm Tfalse}(A)=\{\,s\colon \bar{s}(A)\in\{F,Both\}\, \}$. 
So we could interpret Belnap's results as a representation of $\alg{Fm}(X)$ 
 as 
upward-closed 
subsets
of homomorphisms from $\alg{Fm}(X)$ to $\alg{L4}$  ordered pointwise 
by~$\alg{A4}$.

Only a few steps are needed to connect Belnap's representation, as outlined above,  
with the natural duality for De Morgan algebras; see \cite[Section~4.3.15]{CD98} and the references therein. 
We
adopt 
 the notation of  \cite{CD98} for the generating algebra, 
$\underline{\mathbf{dM}}$, and for the alter ego, $\twiddle{\mathbf{dM}}$. 
  First observe that
${\alg{L4}\cong\underline{\mathbf{dM}}}$ is a De Morgan algebra. Therefore
each homomorphism $h\colon\alg{Fm}(X)\to\alg{L4}$ factors 
 through the free De Morgan algebra $\mathbf{F}_{\cat{DM}}(X)$. Hence the set of set-ups can be identified with $\cat{DM}(\mathbf{F}_{\cat{DM}}(X),\alg{L4})$. 
It is also 
necessary to 
check that 
 for each formula $A$ the sets  $\text{Tset}(A)$ and $\text{Fset}(A)$ are  related 
by the involution of the dual space of a De Morgan algebra; more precisely, $g(\text{Tset}(A))=\text{Fset}(A)$.
 And finally, of course, 
topology plays its role by enabling one to characterise 
 those 
upward-closed sets (represented by maps) that correspond to formulas.

These  observations  
serve to stress that 
  $\alg{L4}$ and $\alg{A4
}$ in Belnap's works play quite different roles.
Moreover, these structures 
 are intimately related to the roles of 
$\underline{\mathbf{dM}}$ and $\twiddle{\mathbf{dM}}$
in the natural duality for De Morgan algebras. The idea of
combining 
 two lattices into one structure originated with 
 Ginsberg  \cite{Gin}. 
  The dualities
presented in Theorem~\ref{DBnatdual} and~~\ref{DBUnatdual} can be seen as a bridge reconciling Belnap's and Ginsberg's approaches, the first considering two separated lattice structures $\alg{L4}$ and $\alg{A4}$ with different roles but based on the same universe, and the latter combining them into a single 
algebraic structure.
We, in like manner,  
work with
 two different structures $\four$ and $\fourDBT$ (and $\fourU$ and $\fourDBTU$ in the unbounded case)  with  different structures
having distinctive  roles:  one logical with an algebraic structure, the other epistemic with a  poset structure.

\bibliographystyle{amsplain}
\renewcommand{\bibname}{References}

\end{document}